\newtheorem{theorem}{Theorem}[section]
\newtheorem{lemma}[theorem]{Lemma}
\newtheorem{remark}{Remark}[section]
\newtheorem{mainremark}{Remark}
\newtheorem{definition}{Definition}[section]
\newtheorem{maintheorem}{Theorem}
\newtheorem{maincorollary}{Corollary}
\newtheorem{claim}{Claim}[section]
\newtheorem{conjecture}{Conjecture}
\newcommand{\blanksquare}{\,\,\,$\sqcup\!\!\!\!\sqcap$}
\newcounter{example}
{\par}
\newcommand{\ti}{\;\;\makebox[0pt]{$\top$}\makebox[0pt]{$\cap$}\;\;}
\begin{document}

\title[Stability properties of divergence-free vector fields]{Stability properties of divergence-free vector fields}

\author[C\'elia Ferreira]{C\'elia Ferreira}
\address{Departamento de Matem\'atica, Universidade do Porto, 
Rua do Campo Alegre, 687, 
4169-007 Porto, Portugal}
\email{celiam@fc.up.pt}

\begin{abstract}
A divergence-free vector field satisfies the star property if any divergence-free vector field in some $C^1$-neighborhood has all singularities and all closed orbits hyperbolic. 

In this paper we prove that any divergence-free vector field defined on a Riemannian manifold and satisfying the star property is Anosov. 
It is also shown that a $C^1$-structurally stable divergence-free vector field can be approximated by an Anosov divergence-free vector field.
Moreover, we prove that any divergence-free vector field can be $C^1$-approximated by an Anosov divergence-free vector field, or else by a divergence-free vector field exhibiting a heterodimensional cycle.
\end{abstract}

\maketitle

\bigskip
\noindent\emph{MSC 2000:} primary 37D20, 37D30, 37C27; secondary 37C10.\\
\textbf{Keywords:} Divergence-free vector field; Anosov vector field; Dominated splitting; Structurally stable vector field; Heterodimensional cycle.


\begin{section}{Introduction and statement of the results}\label{resultstat}

Let $M$, sometimes denoted by $M^n$, be an $n$-dimensional, $n\geq 2$, closed, connected and smooth Riemannian manifold, endowed with a volume form, which has associated a measure $\mu$, called the Lebesgue measure. 
Let $\mathfrak{X}^r(M)$ be the set of vector fields and let $\mathfrak{X}_{\mu}^r(M)$ be the set of divergence-free vector fields, both defined on $M$ and endowed with the $C^r$ Whitney topology, $r\geq 1$. 
We emphasize that in this paper we are restricted to the $C^1$ topology, since our proofs use several technical results which are just proved for this topology (see results in Section~\ref{auxresults}).

Take $X\in\mathfrak{X}^r(M)$. We denote by $Per(X)$ the union of the closed orbits of $X$ and by $Sing(X)$ the union of the singularities of $X$. Singularities and closed orbits are called \textit{critical elements}, denoted by $Crit(X)$.
If $p\notin Sing(X)$ then $p$ is called a \textit{regular} point and $M$ is said to be regular if $Sing(X)=\emptyset$.

Take $x\in M$ a regular point for $X\in\mathfrak{X}^1(M)$ and let $N_x:=X(x)^{\bot}\subset T_xM$ denote the ($\dim(M)-1$)-di\-men\-sion\-al normal bundle of $X$ at $x$. Since, in general, $N_x$ is not $DX^t_x$-invariant, we define the \textit{linear Poincar\'{e} flow} $$P^t_X(x):=\Pi_{X^t(x)}\circ DX^t_x,$$ where $\Pi_{X^t(x)}: T_{X^t(x)}M\rightarrow N_{X^t(x)}$ is the canonical orthogonal projection.
Recently, Li, Gan and Wen generalized the notion of the linear Poincar\'{e} flow, in order to include singularities in their study (see \cite{LGW}).

Now, we state some basic definitions. 

\begin{definition}
Let $X\in\mathfrak{X}^1(M)$. An $X^t$-invariant, compact and regular set $\Lambda\subset M$ is called \emph{hyperbolic} if $N_{\Lambda}$ has a $P_X^t$-invariant splitting $N_{\Lambda}^s\oplus N_{\Lambda}^u$ such that there is $\ell>0$ satisfying 
\begin{center}
$\|P_X^{\ell}(x)|_{N^s_x}\|\leq \dfrac{1}{2}$ and $\|P_X^{-{\ell}}(X^{\ell}(x))|_{N^u_{X^{\ell}(x)}}\|\leq\dfrac{1}{2}$, for any $x\in\Lambda$.
\end{center}
\end{definition}

A vector field $X$ is said to be \textit{Anosov} if the manifold $M$ is hyperbolic. Let $\mathcal{A}_{\mu}(M)$ denote the $C^1$-open set of divergence-free Anosov vector fields defined on $M$. 
In the divergence-free context, a hyperbolic critical point $p$ must be of \textit{saddle} type, having both $N_{p}^s$ and $N_{p}^u$ with dimension between $1$ and $n-2$.
A vector field $X$ is called \textit{isolated in the boundary of $\mathcal{A}_{\mu}(M)$} if $X$ is not Anosov and, given a small neighborhood $\mathcal{U}$ of $X$, any vector field $Y\in\mathcal{U}\backslash X$ is Anosov.

Now, we state the definition of \emph{dominated splitting}, a more relaxed notion of splitting.

\begin{definition}
Take $X\in\mathfrak{X}^1(M)$ and $\Lambda\subset M$ a compact, $X^t$-invariant and regular set. A $P_X^t$-invariant splitting $N_{\Lambda}=N_{\Lambda}^1\oplus N_{\Lambda}^2$ is said a \emph{dominated splitting} if there is $\ell>0$ such that
\begin{center}
${\|P_X^{\ell}(x)|_{{{N}}_x^1}\|}\cdot{\|P_X^{-\ell}(X^{\ell}(x))|_{{{N}}_{X^{\ell}(x)}^2}\|}\leq\dfrac{1}{2}, \: \: \forall \: x\in \Lambda.$
\end{center}
\end{definition}


One of the most important conjectures in the field of dynamical systems, posed by Palis and Smale in 1970, is to know if a $C^r$-structural stable system satisfies the Axiom A and the strong transversality conditions. This is the so called \textit{structural stability conjecture}. Notice that, by the Poincar\'{e} recurrence theorem, in the conservative setting we conclude that the non-wandering set coincides with the whole manifold. So, a conservative diffeomorphism (or a divergence-free vector field) which is Axiom A is actually Anosov. 

The study of the previous conjecture motivated Ma\~n\'e, in the early 1980's, to define the set $\mathcal{F}^1(M)$ of dissipative diffeomorphisms having a $C^1$-neighborhood $\mathcal{U}$ such that every diffeomorphism inside $\mathcal{U}$ has all closed orbits of hyperbolic type.
We call $f\in\mathcal{F}^1(M)$ a \textit{star diffeomorphism}.

It is known that $\Omega$-stable diffeomorphisms belong to $\mathcal{F}^1(M)$ (see \cite{F}) and that if $f\in \mathcal{F}^1(M)$ then $\Omega(f)=\overline{Per(f)}$ (see \cite{M1}). Thus, the structural stability conjecture is contained in the following one

\begin{conjecture}
Does a star system have its non-wandering set hyperbolic?
\end{conjecture}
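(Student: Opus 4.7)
The plan is to resolve the conjecture in the divergence-free setting stated in the abstract: if $X\in\mathfrak{X}_\mu^1(M)$ satisfies the star property then $X$ is Anosov. Since $X$ preserves the Lebesgue measure $\mu$, Poincar\'e recurrence gives $\Omega(X)=M$, so ``non-wandering set hyperbolic'' means $X$ is Anosov. Therefore the problem reduces to propagating the local hyperbolic information at critical elements (the star hypothesis) to a global hyperbolic splitting of $N_M$ under $P^t_X$.

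The first step is to get uniform control on the critical elements. Using a conservative version of Franks' lemma for divergence-free vector fields (realizing divergence-free perturbations of the derivative cocycle along periodic orbits and at singularities), one shows in Ma\~n\'e's spirit that there exist $C^1$-neighborhood constants $\ell_0, m_0$ and $\sigma<1$ such that every $\gamma\in Per(Y)$ and every $\sigma\in Sing(Y)$ with $Y$ in that neighborhood admits a $\sigma$-dominated, uniformly hyperbolic splitting of $N$ for $P^t_Y$; otherwise one could produce an arbitrarily small divergence-free perturbation destroying hyperbolicity of some closed orbit or singularity, contradicting the star property. Because the star hypothesis already forbids saddle-node and elliptic bifurcations along the orbit of the perturbation, this step is essentially a conservative Ma\~n\'e argument carried out with the perturbative tools cited in Section~\ref{auxresults}.

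Next, I would extract a dominated splitting on all of $M\setminus Sing(X)$ (and in fact on $M$ once singularities are treated via the extended linear Poincar\'e flow of Li--Gan--Wen) by combining the uniform estimates on closed orbits with a $C^1$-generic closing/connecting lemma in the divergence-free category: by Pugh--Robinson type results, $Per(X)$ is dense in $M$, so the uniform dominated estimate extends by continuity of $P^t_X$ to the whole manifold. The key conservative ingredient is then to upgrade this dominated splitting $N=N^1\oplus N^2$ to a hyperbolic one. Here one exploits that the Jacobian of $P^t_X$ is $1$ along orbits (because $X$ is divergence-free), so that the top Lyapunov exponents of $N^2$ and bottom Lyapunov exponents of $N^1$ carry opposite signs on average; together with domination, this forces uniform contraction on $N^1$ and uniform expansion on $N^2$, giving hyperbolicity of $M$.

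The main obstacle I expect is precisely the passage from a dominated splitting to a hyperbolic one in the presence of possible singularities. For regular compact invariant sets the conservative Oseledets argument above is clean, but singularities of divergence-free saddle type can have ``mixed'' normal directions that interact badly with the Poincar\'e projection $\Pi$, forcing the use of the extended linear Poincar\'e flow and delicate domination estimates near $Sing(X)$. Closing the argument will therefore hinge on verifying that the dominated splitting obtained from closed orbits extends continuously across singularities and that the volume-preserving constraint still yields the sign pattern of Lyapunov exponents needed to conclude uniform hyperbolicity on all of $M$.
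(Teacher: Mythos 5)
The conjecture is stated as an open question for general star systems; the paper resolves it only in the divergence-free higher-dimensional setting, via Theorem \ref{mainth}, and that is the statement your proposal attacks. Your overall architecture (Ma\~n\'e-style uniform estimates on closed orbits via a conservative Franks lemma, a closing/density argument to extend a dominated splitting, then an upgrade to hyperbolicity using volume preservation) tracks the paper's line of argument in spirit, but there is one genuine structural gap and one place where you are too optimistic.

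The gap is in how you treat singularities. You propose to ``handle'' them with the extended linear Poincar\'e flow of Li--Gan--Wen and then to ``verify that the dominated splitting extends continuously across singularities,'' and you flag this as the main obstacle. The paper does something cleaner and essential: it proves (Lemma \ref{Gnosing}) that a divergence-free star vector field has \emph{no} singularities at all. The mechanism is a contradiction argument: perturb to make any singularity linear hyperbolic (Lemma \ref{linearsing}); then the conservative closing lemma and the dichotomy of Theorem \ref{BGV} force an $\ell$-dominated splitting for $P^t_Y$ over the closure of a family of long closed orbits, i.e.\ over all of $M\setminus Sing(Y)$; but Vivier's theorem (Theorem \ref{vivier}) says a linear hyperbolic saddle singularity precludes any such dominated splitting on $M\setminus Sing(Y)$. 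Contradiction. This eliminates the extended Poincar\'e flow entirely -- the ordinary linear Poincar\'e flow is globally defined -- and the ``main obstacle'' you anticipate simply does not arise. Without this step, your sketch is not a proof: there is no known way to push the Li--Gan--Wen extended flow through the conservative Oseledets argument in the presence of saddle singularities, and in fact the point of the argument is that the star hypothesis is incompatible with singularities in this setting.

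The second, smaller issue is the passage from dominated to hyperbolic. You write that the zero-sum constraint on Lyapunov exponents ``together with domination, forces uniform contraction on $N^1$.'' That is too quick: a dominated splitting with Lyapunov exponents of the right sign $\mu$-a.e.\ does not by itself give uniform rates. The paper needs two additional ingredients: (i) Lemma \ref{unifhyper} (uniform hyperbolicity in the period), obtained by a Franks-lemma perturbation that would otherwise create a non-hyperbolic closed orbit; and (ii) the ergodic closing lemma plus a Ma\~n\'e-type criterion (Lemma \ref{mane1}) to convert the a.e.\ statement into $\liminf_{t\to\infty}\|P^t_X(x)|_{N^1_x}\|=0$ for \emph{every} $x\in M$, which is what actually yields uniform contraction by compactness. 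Your first paragraph does gesture at the period estimate, but the ergodic closing lemma step is absent, and without it the argument that eliminates the possibility of a point with non-negative average exponent on $N^1$ does not close.
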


On \cite{M}, Ma\~n\'e proved the previous conjecture for surfaces: every dissipative diffeomorphism of $\mathcal{F}^1(M^2)$ satisfies the Axiom A and the no-cycle condition. Later, Hayashi extended this result for higher dimensions (see \cite{H}).  

In 1988, Ma\~n\'e presented a proof of the stability conjecture for $C^1$-diffeomorphisms (see \cite{Ma}).

For the continuous-time case, a star vector field is defined as follows.

\begin{definition}\label{starflowdef}
A vector field $X\in\mathfrak{X}^1(M)$ is a \textbf{\textit{star vector field}} if there exists a $C^1$-neighborhood $\mathcal{U}$ of $X$ in  $\mathfrak{X}^1(M)$ such that if $Y\in\mathcal{U}$ then every point in $Crit(Y)$ is hyperbolic. 
Moreover, a vector field $X\in\mathfrak{X}_{\mu}^1(M)$ is a \textbf{\textit{divergence-free star vector field}} if there exists a $C^1$-neighborhood $\mathcal{U}$ of $X$ in $\mathfrak{X}_{\mu}^1(M)$ such that if $Y\in\mathcal{U}$ then every point in $Crit(Y)$ is hyperbolic. 
The set of star vector fields is denoted by $\mathcal{G}^1(M)$ and the set of divergence-free star vector fields is denoted by $\mathcal{G}_{\mu}^1(M)$.
\end{definition}

Note that $\mathcal{G}^1(M)$ and $\mathcal{G}_{\mu}^1(M)$ are $C^1$-open in $\mathfrak{X}^1(M)$ and $\mathfrak{X}^1_{\mu}(M)$, respectively.

Once that the previous definition concerns critical elements only and the hyperbolicity put on critical elements is merely orbit-wise, the star-condition looks, a priori, quite weak. 
However, as we will see in Theorem \ref{BRGA} and Theorem \ref{mainth}, for the divergence-free setting it is not.

A star vector field may fail to have hyperbolic non-wandering set, as the famous Lorenz attractor shows (see \cite{G}), or may fail to have the critical elements dense in the non-wandering set (see \cite{Di}) or, even with Axiom A satisfied, still may fail to satisfy the no-cycle condition (see \cite{LW}). 
However, for star vector fields, all these counterexamples exhibit singularities. 
So, recently, Gan and Wen (see \cite{GW}) proved the following remarkable result about dissipative star vector fields defined on an $n$-dimensional manifold, where $n\geq3$:

\begin{theorem}
If $X\in\mathcal{G}^1(M^n)$ and $Sing(X)=\emptyset$ then $X$ is Axiom A without cycles. 
\end{theorem}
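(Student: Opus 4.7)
The plan is to adapt the Ma\~n\'e--Hayashi approach for star diffeomorphisms to the flow setting, using crucially that $Sing(X)=\emptyset$ so that the linear Poincar\'e flow $P^t_X$ is defined and continuous on all of $M^n$. Since Axiom A amounts to hyperbolicity of the non-wandering set $\Omega(X)$ together with density of periodic orbits, I would split the work into three blocks: (a) $\Omega(X)=\overline{Per(X)}$, (b) a dominated splitting for $P^t_X$ on $\overline{Per(X)}$, and (c) upgrading that splitting to uniform hyperbolicity. The absence of cycles is then handled as a short additional step.

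For block (a), I would apply Pugh's $C^1$ closing lemma to each $x\in\Omega(X)$, producing a $C^1$-small perturbation $Y$ of $X$ having a closed orbit near $x$. The star property makes this closed orbit hyperbolic and therefore persistent, so by reversing the perturbation (via an implicit-function or shadowing argument) one recovers a closed orbit of $X$ itself near $x$, giving $\Omega(X)\subseteq\overline{Per(X)}$ and hence equality.

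Block (b) is where I expect the real difficulty. The plan is to argue by contradiction: if no dominated splitting exists on $\overline{Per(X)}$, a Liao-type selection argument should produce a sequence of periodic orbits whose linear Poincar\'e cocycles exhibit arbitrarily small angles or arbitrarily close Lyapunov exponents across every candidate splitting. A flow version of Franks' lemma, applied inside disjoint tubular neighbourhoods of these orbits, would then realise a $C^1$-small perturbation that introduces a non-hyperbolic eigenvalue in some Poincar\'e return map, contradicting $X\in\mathcal{G}^1(M^n)$. The hypothesis $Sing(X)=\emptyset$ is essential here, since the Franks-type perturbation machinery requires the orbit segments involved to stay uniformly away from singularities; controlling the indices of the constructed orbits so that the non-hyperbolicity is unavoidable is the most delicate technical point.

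With a dominated splitting $N=N^1\oplus N^2$ on $\overline{Per(X)}$ in hand, block (c) follows from a Pliss--Ma\~n\'e argument. The star property, again together with Franks' lemma, forces the eigenvalues of the Poincar\'e return maps along periodic orbits to stay uniformly bounded away from the unit circle, which via the Pliss lemma translates into uniform exponential contraction on $N^1$ and expansion on $N^2$. Continuity of $P^t_X$ and density of $Per(X)$ propagate these hyperbolic estimates to the whole closure $\overline{Per(X)}=\Omega(X)$, yielding Axiom A. Finally, any cycle between hyperbolic basic pieces could be broken via Hayashi's connecting lemma and a Franks-type perturbation into a non-hyperbolic periodic orbit, once again contradicting $X\in\mathcal{G}^1(M^n)$, so the no-cycle condition follows.
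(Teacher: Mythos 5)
The statement you are proving is not established in this paper at all: it is the main theorem of Gan and Wen's article \cite{GW}, which the present paper simply cites as a known result (and then proceeds to prove a divergence-free analogue, Theorem \ref{mainth}). So there is no ``paper proof'' to compare against; what follows is an assessment of your sketch on its own terms.

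Your three-block strategy (closing lemma to get $\Omega(X)=\overline{Per(X)}$, dominated splitting by contradiction via Franks, Pliss--Ma\~n\'e upgrade to uniform hyperbolicity, and no-cycle via the connecting lemma) captures the broad outline of the Ma\~n\'e--Hayashi philosophy that underlies Gan--Wen. However, there is a structural gap at the heart of block (b): you seek a \emph{single} dominated splitting $N=N^1\oplus N^2$ on all of $\overline{Per(X)}$, but in the dissipative setting the periodic orbits accumulating on $\Omega(X)$ need not have constant index, and when indices vary a global dominated splitting of a fixed dimension cannot exist. This is precisely what makes the nonsingular star-flow theorem hard: Gan and Wen must first decompose $\Omega(X)$ into chain-transitive pieces, then show (using the connecting lemma together with uniform hyperbolicity at the period and a Franks-type perturbation) that periodic orbits homoclinically related within a piece share a common index, and only then construct a dominated splitting \emph{separately on each piece}. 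Without this decomposition your blocks (b) and (c) are not well-posed. In the divergence-free context of the present paper this issue is avoided because Poincar\'e recurrence forces $\Omega(X)=M$ and a transitivity/constant-index argument (see Lemma \ref{constantind}) applies, but you cannot invoke that here. A secondary, smaller point: in block (a), closing a non-wandering point of $X$ via a perturbation $Y$ gives a periodic orbit of $Y$, not of $X$; the passage to $\Omega(X)=\overline{Per(X)}$ for star systems is Ma\~n\'e's argument \cite{M1} and rests on the uniform hyperbolicity of the created orbits together with a limit argument, rather than on ``reversing the perturbation''.
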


On \cite{GW}, it is also shown that a dissipative star vector field exhibit no heterodimensional cycles.

However, in the divergence-free setting it is possible to prove that a star vector field does not exhibit singularities. So, generalizing Gan and Wen's result, Bessa and Rocha (see \cite{BR2}) recently proved an analogous result on volume preserving vector fields, using techniques on conservative dynamics.

\begin{theorem}\label{BRGA}
If $X\in\mathcal{G}_{\mu}^1(M^3)$ then $Sing(X)=\emptyset$ and $X$ is Anosov.
\end{theorem}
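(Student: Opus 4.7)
The plan is to prove the two conclusions separately, each by contradiction using $C^1$-small divergence-free perturbations. The key ingredients are Poincar\'e recurrence (which makes $\Omega(X) = M$ automatic), a divergence-free closing lemma, and a divergence-free Franks-type lemma that allows controlled perturbation of the linear Poincar\'e flow along a periodic orbit.

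\textbf{Absence of singularities.} Suppose for contradiction that $p \in Sing(X)$. The star hypothesis makes $p$ hyperbolic, and the trace-zero condition on $DX(p)$ in dimension three forces $p$ to be a saddle with non-trivial stable and unstable manifolds $W^s(p), W^u(p)$. By Poincar\'e recurrence the non-wandering set is all of $M$, so $W^u(p)$ accumulates on $p$. A conservative Hayashi-type connecting lemma yields a homoclinic loop at $p$, and a divergence-free perturbation supported in a small tube away from $p$ turns this loop into a closed orbit $\gamma$ whose return map is determined, up to small error, by the linearisation at $p$ composed with transit maps. By tuning the transit time (via an additional conservative Franks-type perturbation) one can force the two-dimensional area-preserving return map of $\gamma$ to have complex-conjugate multipliers on the unit circle, i.e.\ to be elliptic and thus non-hyperbolic; this produces a non-hyperbolic closed orbit arbitrarily $C^1$-close to $X$, contradicting the star property.

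\textbf{Anosov property.} Assume now $Sing(X) = \emptyset$, so $\inf_M \|X\| > 0$ by compactness. I would proceed in three steps. First, a Ma\~n\'e-type argument yields a uniform dominated splitting of the linear Poincar\'e flow on $Per(X)$: if none existed, one could use the divergence-free Franks lemma to rotate the Poincar\'e return map of some periodic orbit and destroy its hyperbolicity, contradicting the star hypothesis. Second, since $\overline{Per(X)} = M$ (by the divergence-free closing lemma together with Poincar\'e recurrence), the dominated splitting extends by continuity to all of $M$, yielding $N = N^1 \oplus N^2$ with $\dim N^i = 1$ (the normal bundle has dimension two). Third, the divergence-free condition gives $|\det P_X^t(x)| = \|X(x)\|/\|X(X^t(x))\|$, which is uniformly bounded above and below; combined with the angle-bounded-below property of a dominated splitting this forces $\|P_X^t|_{N^1_x}\| \cdot \|P_X^t|_{N^2_x}\| \asymp 1$, and then the dominance inequality forces exponential contraction on $N^1$ and exponential expansion on $N^2$. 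Hence $X$ is Anosov.

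The main obstacle is producing non-hyperbolic closed orbits via conservative perturbations: both the connecting-lemma step in the singularity argument and the lack-of-domination step in the Anosov argument demand that the perturbation stay inside $\mathfrak{X}^1_{\mu}(M)$. Volume-preserving versions of the Franks and connecting lemmas are substantially more delicate than their dissipative counterparts (one typically perturbs via Hamiltonian isotopies in Darboux-type charts) and are precisely the technical auxiliary results referenced in Section~\ref{auxresults}. The novelty special to dimension three is the clean final step: a $(1{+}1)$ dominated splitting on a two-dimensional normal bundle whose cocycle has bounded determinant is automatically hyperbolic.
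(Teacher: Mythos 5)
Your treatment of the Anosov step, granted the absence of singularities and a dominated splitting, is essentially the route of Bessa--Rocha and of the present paper: once one has a $P_X^t$-invariant dominated splitting $N=N^1\oplus N^2$ over $M$ with $\dim N^1=\dim N^2=1$, the identity $|\det P_X^t(x)|=\|X(x)\|/\|X(X^t(x))\|$ together with $\|X\|$ being bounded away from $0$ and $\infty$ on a compact regular $M$ and the bounded-angle property of domination gives $\|P_X^t|_{N^1_x}\|\cdot\|P_X^t|_{N^2_x}\|\asymp 1$, and dominance then forces uniform contraction on $N^1$ and expansion on $N^2$. This is precisely the dimension-three argument the paper explains cannot be used in higher dimensions. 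Getting the dominated splitting in the first place via a Ma\~n\'e-type argument (no domination $\Rightarrow$ Franks-type rotation $\Rightarrow$ non-hyperbolic closed orbit $\Rightarrow$ contradiction with the star hypothesis) also matches the strategy embodied in Theorem~\ref{BGV} and Lemmas~\ref{Gdomsplit} and~\ref{unifhyper}.

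Your argument for $Sing(X)=\emptyset$, however, departs from the paper's route and contains two genuine gaps. First, the claim that ``$W^u(p)$ accumulates on $p$'' and hence a conservative connecting lemma produces a homoclinic loop is not justified by $\Omega(X)=M$ alone: Poincar\'e recurrence tells you that individual points of $W^u(p)$ are non-wandering, but it does not tell you that the unstable branch re-enters a neighbourhood of $p$ close to $W^s_{\mathrm{loc}}(p)$, which is what the connecting lemma needs. Second, and more seriously, the claim that one can tune a nearby closed orbit $\gamma$ to become elliptic is unsubstantiated. If $DX(p)$ has real spectrum, the passage map through a linearising chart of $p$ over a long transit time $T$ is strongly hyperbolic (multipliers of order $e^{\pm\lambda T}$), with no rotation to exploit; an $\epsilon$-$C^1$-perturbation can only multiply the return map by a factor $e^{O(\xi_0 T)}$ with $\xi_0$ small compared to the eigenvalues of $DX(p)$, so it cannot bring the trace of the area-preserving return map down into $[-2,2]$. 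Producing elliptic orbits near a singularity requires a Shilnikov-type saddle-focus mechanism, which in turn requires first perturbing the spectrum of $DX(p)$ to a complex pair and then carrying out a genuine Shilnikov analysis of a smooth vector field -- none of which appears in your sketch. The paper's route avoids all of this: it first makes $p$ a linear hyperbolic saddle (Lemma~\ref{linearsing}), then combines the dichotomy of Theorem~\ref{BGV} with the conservative closing lemma and density of periodic orbits to obtain an $\ell$-dominated splitting for $P_X^t$ over $M\setminus Sing(X)$, and finally invokes Vivier's theorem (Theorem~\ref{vivier}): a linear hyperbolic saddle singularity is incompatible with any dominated splitting of the linear Poincar\'e flow. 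That contradiction is clean and needs no homoclinic bifurcation machinery.
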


From this result, note that we have $\mathcal{G}^1_{\mu}(M)=\mathcal{A}_{\mu}(M^3)$. 
However, this result cannot be trivially extended to higher dimensions because its proof, in dimension $3$, assumes that the normal bundle is splitted in two one-dimensional subbundles. 
So, using volume-preserving arguments, the authors were able to prove the existence of a dominated splitting for the linear Poincar\'{e} flow and then the hyperbolicity. 

On \cite{BFR}, the authors prove Conjecture 1 for Hamiltonian vector fields, defined on a three-dimensional, compact and with no singularities connected component of an energy level of a four-dimensional symplectic manifold.

In higher dimensions, the subbundles of the normal bundle may have dimension strictly larger than one, meaning that a vector field with a dominated splitting structure is not necessarily hyperbolic. 
In this paper, we prove the higher-dimensional version of Theorem \ref{BRGA}.

\begin{maintheorem}\label{mainth}
If $X\in\mathcal{G}^1_{\mu}(M^n)$ then $Sing(X)=\emptyset$ and $X$ is Anosov, $n\geq4$.
\end{maintheorem}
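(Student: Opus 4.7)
The plan is to proceed in three stages: (i) exclude singularities, (ii) produce a global dominated splitting of the linear Poincar\'e flow on $M$ from the star condition via perturbation, and (iii) upgrade this dominated splitting to a hyperbolic one using the incompressibility of the flow. The last step is the one that is genuinely new compared to the three-dimensional case of Theorem \ref{BRGA}.

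Step 1 (No singularities). I would argue by contradiction: if $p\in Sing(X)$, then $p$ is hyperbolic by the star property, and since $\operatorname{div} X=0$ the eigenvalues of $DX_p$ sum to zero, so $p$ is of saddle type. Using a conservative pasting lemma together with a divergence-free Franks-type lemma, I would perturb $X$ inside $\mathfrak{X}^1_\mu(M)$ near $p$ to deform the eigenvalue configuration of $DX_p$ (or of a nearby closed orbit produced by a conservative closing lemma applied to recurrent points accumulating on the invariant manifolds of $p$) so as to create a non-hyperbolic critical element; this contradicts $X\in\mathcal{G}^1_\mu(M)$. This is the analogue, in higher dimensions, of the argument used in \cite{BR2}.

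Step 2 (Dominated splitting on all of $M$). Once $Sing(X)=\emptyset$, $M$ itself is a compact regular $X^t$-invariant set. Combining the star hypothesis with the conservative Franks lemma, I would show that there exist uniform constants $\ell>0$ and $\sigma<1$ such that every closed orbit of every nearby $Y\in\mathfrak{X}^1_\mu(M)$ is $(\ell,\sigma)$-hyperbolic for $P_Y^t$ with an index that is locally constant; otherwise one could perturb to produce a non-hyperbolic periodic orbit. By Poincar\'e recurrence, $\Omega(X)=M$, and Pugh's $C^1$ conservative closing lemma gives $\overline{Per(X)}=M$. A standard argument (Ma\~n\'e/Bonatti--D\'\i az--Pujals) then transfers the uniform hyperbolic estimates on periodic orbits to a $P_X^t$-invariant dominated splitting $N_M=N^1\oplus\cdots\oplus N^k$ with the domination index $\ell$ above.

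Step 3 (From dominated to hyperbolic, and main obstacle). Let $N_M=E^s\oplus E^c\oplus E^u$ be the coarsest decomposition where $E^s$ and $E^u$ are the sums of those factors that are uniformly contracted/expanded by $P_X^t$ (hence automatically stable/unstable) and $E^c$ collects the ``neutral'' factors. The goal is to show $E^c=\{0\}$ and that $E^s,E^u$ are the genuine stable/unstable bundles. Here I would use that $P_X^t$ preserves a natural volume form on $N_M$ (inherited from $\mu$ via the orthogonal projection onto the normal bundle together with $\operatorname{div} X=0$), so $\det P_X^t\equiv 1$. If $E^c\neq\{0\}$, the dominated splitting forces its Lyapunov exponents to be squeezed between the (negative) $E^s$-exponents and the (positive) $E^u$-exponents; combined with $\det P_X^t|_{E^c}$ being subexponential, one extracts a periodic orbit along which $P_X^t|_{E^c}$ has a non-hyperbolic return map, and a divergence-free Franks-type perturbation destroys its hyperbolicity, contradicting $X\in\mathcal{G}^1_\mu(M)$. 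The same type of argument, applied inside $E^s$ (resp.\ $E^u$), rules out higher-dimensional subfactors whose extremal Lyapunov exponents could touch zero. The main obstacle is precisely this step: in dimension three, the two factors of the dominated splitting are one-dimensional and area preservation on $N_M$ makes the contraction/expansion automatic, whereas for $n\geq 4$ one must control possibly multidimensional subbundles, which requires combining dominated splitting estimates with sharp conservative perturbation tools (pasting lemma, ergodic closing lemma, and the Li--Gan--Wen extended linear Poincar\'e flow machinery) to produce the non-hyperbolic periodic orbits needed for the contradiction.
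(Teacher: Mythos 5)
Your Step~1 has a genuine gap. You propose to perturb $X$ near a hyperbolic saddle singularity $p$ to ``deform the eigenvalue configuration of $DX_p$ \ldots\ so as to create a non-hyperbolic critical element,'' but this cannot work as stated: the star condition guarantees that $p$ remains hyperbolic under \emph{every} small conservative $C^1$-perturbation, so there is no admissible perturbation pushing an eigenvalue of $DX_p$ onto the imaginary axis. The fallback you mention---producing a closed orbit near $p$ via the closing lemma and then destroying its hyperbolicity---is closer to the right mechanism, but you never explain \emph{why} the presence of the singularity makes such a destabilizing perturbation possible, since nearby periodic orbits are, a priori, uniformly hyperbolic by the star property. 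The paper's argument is indirect: the star hypothesis together with Theorem~\ref{BGV} and the conservative $C^1$-closing lemma (Theorem~\ref{closing}) force a dominated splitting for $P_X^t$ over $M\backslash Sing(X)$, while Vivier's Theorem~\ref{vivier} shows that a linear hyperbolic saddle singularity (which Lemma~\ref{linearsing} lets one assume after a small perturbation) precludes any such dominated splitting. Without an ingredient playing the role of Vivier's theorem, Step~1 does not close.

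Steps~2--3 are in the right spirit, but Step~3 is a genuinely different route from the paper's. The paper builds a two-bundle splitting $N=N^1\oplus N^2$ directly from the stable/unstable decompositions of closed orbits (Lemma~\ref{splitlemma}, Lemma~\ref{Gdomsplit}) and then upgrades it to hyperbolicity via Ma\~n\'e's $\liminf$ criterion (Lemma~\ref{mane1}), uniform hyperbolicity in the period (Lemma~\ref{unifhyper}), and Arnaud's conservative Ergodic Closing Lemma; there is no central bundle $E^c$ to eliminate. Your refined decomposition $E^s\oplus E^c\oplus E^u$ is a viable alternative, but two points need repair. First, $\det P_X^t$ is not identically $1$: incompressibility gives $\det DX^t\equiv 1$, and for the linear Poincar\'e flow one has $\det P_X^t(x)=\|X(x)\|/\|X(X^t(x))\|$, which is merely bounded away from $0$ and $\infty$ on the compact singularity-free manifold (hence subexponential, and equal to $1$ over a period); you only use the subexponential statement, so this is fixable but should be stated correctly. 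Second, a closed orbit of a star field cannot literally have a ``non-hyperbolic return map''; what you actually extract is a sequence of closed orbits whose weakest exponents tend to zero, after which you must invoke the conservative Franks lemma (Theorem~\ref{Franks}), exactly as in Lemma~\ref{unifhyper}, to manufacture a genuinely non-hyperbolic periodic orbit and reach the contradiction. Finally, the Li--Gan--Wen extended linear Poincar\'e flow you cite is unnecessary once Step~1 removes the singularities; the ordinary linear Poincar\'e flow suffices throughout, as in the paper.
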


Notice that the converse of Theorem \ref{mainth} is trivially true due to the openness of Anosov vector fields set.
So, Theorem \ref{mainth} implies that 
$$\mathcal{G}^1(M^n)\cap \mathfrak{X}^1_{\mu}(M^n)=\mathcal{G}^1_{\mu}(M^n)=\mathcal{A}_{\mu}(M^n), \; n\geq 4.$$

The next result is a consequence of Theorem \ref{mainth}.

\begin{maincorollary}\label{maincor}
The boundary of the set $\mathcal{A}_{\mu}(M^n)$ has no isolated points, $n\geq 4$.
\end{maincorollary}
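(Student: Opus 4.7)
The plan is to proceed by contradiction. Suppose that $X$ is an isolated point of $\partial\mathcal{A}_{\mu}(M^n)$; then, by the definition given in the introduction, there is a $C^1$-neighborhood $\mathcal{U}$ of $X$ in $\mathfrak{X}_{\mu}^1(M^n)$ such that $X\notin\mathcal{A}_\mu(M^n)$ and every $Y\in\mathcal{U}\setminus\{X\}$ lies in $\mathcal{A}_\mu(M^n)$. I will derive a contradiction by producing an explicit $Y\in\mathcal{U}\setminus\{X\}$ which fails to be Anosov.

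The first step is to observe that $X$ must itself carry at least one non-hyperbolic critical element. Indeed, by Theorem~\ref{mainth} we have $\mathcal{G}^1_{\mu}(M^n)=\mathcal{A}_{\mu}(M^n)$, so $X\notin\mathcal{A}_\mu(M^n)$ forces $X\notin\mathcal{G}^1_\mu(M^n)$. Definition~\ref{starflowdef} then says that every $C^1$-neighborhood of $X$ contains a divergence-free vector field having a non-hyperbolic critical element. However, each $Y\in\mathcal{U}\setminus\{X\}$ is Anosov, and hence lies in $\mathcal{G}^1_\mu(M^n)$, so that all critical elements of $Y$ are hyperbolic. The only remaining possibility is that the non-hyperbolic critical element be located in $X$ itself, say at a point $\sigma\in Crit(X)$.

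The second step constructs the desired perturbation. If $\sigma$ is a singularity, then the linear part $DX(\sigma)$ is a traceless matrix exhibiting an eigenvalue on the imaginary axis. The traceless matrices sharing this property form a codimension-one algebraic subvariety of the vector space of traceless matrices, and so there exist such matrices arbitrarily close to, but distinct from, $DX(\sigma)$. A divergence-free Franks-type perturbation lemma, of the sort employed in the proof of Theorem~\ref{BRGA} in \cite{BR2}, allows us to realize any such matrix as the linear part at $\sigma$ of a vector field $Y\in\mathfrak{X}^1_{\mu}(M^n)$ that is $C^1$-close to $X$ and coincides with $X$ outside a small neighborhood of $\sigma$. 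We can arrange $Y\neq X$ and $Y\in\mathcal{U}$, producing a non-hyperbolic singularity for $Y$. If $\sigma$ is instead a closed orbit, the analogous perturbation is carried out at the (volume-preserving) linearized Poincar\'e return map along $\sigma$, yielding $Y\in\mathcal{U}\setminus\{X\}$ with a non-hyperbolic closed orbit. In either case, $Y$ fails the star condition, so by Theorem~\ref{mainth} $Y$ fails to be Anosov, contradicting $\mathcal{U}\setminus\{X\}\subset\mathcal{A}_\mu(M^n)$.

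The main obstacle is the perturbation step: one must ensure that the divergence-free Franks-type lemma indeed permits an essentially arbitrary traceless perturbation of $DX(\sigma)$ (respectively, an arbitrary volume-preserving perturbation of the Poincar\'e map along $\sigma$) to be realized as the linearization of a $C^1$-small, divergence-free perturbation of $X$ supported in a small flowbox. This is a delicate technical point, but it falls within the scope of the conservative perturbation machinery already mobilised to prove Theorem~\ref{mainth}, so we expect to reuse precisely the same tools.
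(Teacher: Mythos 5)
Your proof is correct, and it takes a genuinely different — and more economical — route than the paper's. The paper proceeds by first showing $Sing(X)=\emptyset$ (by perturbing any singularity, linearizing it via Lemma~\ref{linearsing}, and contradicting the Anosov property of nearby fields), then invoking Remark~\ref{remlemma} to extract a dominated splitting for $P^t_X$, and finally re-running the whole argument of Theorem~\ref{mainth} (uniform hyperbolicity in the period, the Ma\~n\'e/Ergodic-Closing-Lemma step, Lemma~\ref{mane1}) to conclude that $X$ itself is Anosov, contradicting $X\in\partial\mathcal{A}_\mu(M^n)$. You instead use Theorem~\ref{mainth} as a black box via the identity $\mathcal{G}^1_\mu(M^n)=\mathcal{A}_\mu(M^n)$: since $X\notin\mathcal{A}_\mu$, $X\notin\mathcal{G}^1_\mu$, and since all of $\mathcal{U}\setminus\{X\}$ lies in $\mathcal{A}_\mu\subset\mathcal{G}^1_\mu$, the non-hyperbolic critical element demanded by $X\notin\mathcal{G}^1_\mu$ must sit on $X$ itself. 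One localized perturbation then yields $Y\in\mathcal{U}\setminus\{X\}$ with a non-hyperbolic critical element — impossible since $Y$ would have to be Anosov. This avoids re-establishing a dominated splitting and uniform period hyperbolicity for $X$, so it is both shorter and cleaner. One remark on your perturbation step: rather than engineering a traceless (resp.\ volume-preserving) matrix with spectrum on the imaginary axis (resp.\ unit circle) and invoking a Franks-type lemma at $\sigma$ — which for singularities is not literally covered by Theorem~\ref{Franks}, which is stated for periodic points of period greater than $2$ — you can simply perturb $X$ \emph{away} from $\sigma$ using the Pasting Lemma (Theorem~\ref{pasting}). This leaves $\sigma$ and its linearization untouched, so $\sigma$ remains a non-hyperbolic critical element of a divergence-free $Y\ne X$ lying in $\mathcal{U}$, and the contradiction is immediate without any spectral bookkeeping.
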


A vector field $X\in\mathfrak{X}^1(M)$ is said to be \textit{Kupka-Smale} if every element of $Crit(X)$ is hyperbolic and its invariant manifolds intersect transversely.
Considering $M$ a manifold with dimension greater than $3$, we have that the set of Kupka-Smale vector fields $\mathcal{KS}^1(M)$ is a $C^1$-residual subset of $\mathfrak{X}^1(M)$ (see \cite{S}). 
In \cite{R0}, it is shown that this property is also true for a residual subset of all divergence-free vector fields, meaning that the set of Kupka-Smale divergence-free vector fields $\mathcal{KS}^1_{\mu}(M)$ is a $C^1$-residual subset of $\mathfrak{X}_{\mu}^1(M)$.

\begin{mainremark}
From Theorem \ref{BRGA} and Theorem \ref{mainth}, it is straightforward to conclude that if $X\in\mathfrak{X}_{\mu}^1(M^n)$ is in the interior of $\mathcal{KS}^1_{\mu}(M^n)$ then $X\in\mathcal{A}_{\mu}(M^n)$, $n\geq3$. This is an immediate proof for divergence-free vector fields of the result shown by Toyoshiba in \cite{To} for vector fields.
\end{mainremark}

A vector field $X$ is $C^1$-\textit{structurally stable} if there exists a $C^1$-neigh\-bor\-hood $\mathcal{U}$ of $X$ in $\mathfrak{X}^1(M)$ such that every $Y\in\mathcal{U}$ is topologically conjugated to $X$ (see for instance \cite{PM}). The notion of structural stability was first introduced in the mid 1930's by Andronov and Pontrjagin (see \cite{AP}).

We point out that, after the proof of the $C^1$-structural stability conjecture for diffeomorphisms, Gan proved this conjecture for dissipative $C^1$-flows (see \cite{Ga}) and Bessa and Rocha presented a proof on \cite{BR2} for the $C^1$-divergence-free context, but considering a three-dimensional manifold. 
In this paper, we generalize this last result to higher dimensions.

\begin{maintheorem}\label{structh}
If $X\in\mathfrak{X}^1_{\mu}(M^n)$ is $C^1$-structurally stable then it can be $C^1$-approximated by $Y\in\mathcal{A}_{\mu}(M^n)$, $n\geq4$.
\end{maintheorem}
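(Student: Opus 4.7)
The plan is to show that a $C^1$-structurally stable $X\in\mathfrak{X}^1_\mu(M^n)$ actually lies in $\mathcal{G}^1_\mu(M^n)$, so that Theorem~\ref{mainth} directly yields $X\in\mathcal{A}_\mu(M^n)$; the required approximation is then trivially obtained by taking $Y=X$.

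To carry this out, I would fix a $C^1$-neighborhood $\mathcal{U}$ of $X$ in $\mathfrak{X}^1_\mu(M^n)$ witnessing structural stability, so that every $Y\in\mathcal{U}$ is topologically conjugate to $X$ (and hence to every other element of $\mathcal{U}$). The first and main step is to show that each $Y\in\mathcal{U}$ has all its critical elements hyperbolic. Suppose for contradiction that some $Y\in\mathcal{U}$ admits a non-hyperbolic singularity or closed orbit $\gamma$. Using the divergence-free Kupka-Smale theorem of \cite{R0} together with the conservative local perturbation machinery referred to in Section~\ref{auxresults} and in \cite{BR2}, one can unfold $\gamma$ inside $\mathfrak{X}^1_\mu(M^n)$ to produce a $C^1$-small volume-preserving perturbation $Y'\in\mathcal{U}$ whose number of critical elements in a neighborhood of $\gamma$ differs from that of $Y$. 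Since a topological conjugacy sends singularities to singularities and closed orbits to closed orbits with the same period, this contradicts $Y\sim X\sim Y'$. Hence $X\in\mathcal{G}^1_\mu(M^n)$, and Theorem~\ref{mainth} gives $X\in\mathcal{A}_\mu(M^n)$; taking $Y=X$ completes the proof.

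The main obstacle lies in the local unfolding step within the volume-preserving class: in the dissipative setting one is free to modify the linear part of the vector field at the critical element almost arbitrarily in order to bifurcate it, but the divergence-free constraint forces the trace of the linearization at a singularity to vanish and the determinant of the Poincar\'{e} return map of a closed orbit to equal one. The perturbation must therefore respect these constraints while still genuinely changing the local count of critical elements. This is precisely the type of difficulty already handled by the $C^1$ conservative perturbation toolkit used in \cite{BR2} in dimension three, and its higher-dimensional version supplies the required unfolding. Once this step is in place, the remainder of the argument is a direct invocation of Theorem~\ref{mainth}, so no further work is needed to upgrade $X$ from star to Anosov in higher dimensions.
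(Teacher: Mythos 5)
Your proposal would actually establish a result strictly stronger than Theorem~\ref{structh}: you claim that a $C^1$-structurally stable $X\in\mathfrak{X}^1_\mu(M^n)$ is itself a star field, hence itself Anosov. But the paper concludes only that $X$ can be \emph{approximated} by an Anosov field, and in a remark immediately after the proof explicitly warns that it does \emph{not} show $X$ itself is Anosov, citing Gogolev's example in \cite{Go} of a vector field topologically conjugate to Anosov that is not Anosov. In fact the statement ``structurally stable $\Rightarrow$ star'' in the higher-dimensional divergence-free setting is precisely the missing ingredient of the conservative stability conjecture in this class, which the paper leaves open. So if your argument were correct it would settle a question the author takes pains to leave unanswered; this should signal a gap.

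The gap sits in the unfolding step. You assert that any non-hyperbolic critical element $\gamma$ of some $Y\in\mathcal{U}$ can be unfolded by a $C^1$-small divergence-free perturbation so as to change ``the number of critical elements in a neighborhood of $\gamma$'', and that this contradicts topological conjugacy. Neither part is secure. First, in the volume-preserving world a typical non-hyperbolic closed orbit is \emph{elliptic} (Poincar\'e return map with a conjugate pair on the unit circle but no eigenvalue $\pm 1$); a small $C^1$ conservative perturbation of such an orbit does not in any controllable way create or destroy nearby closed orbits -- one generically gets a KAM-type picture whose periodic orbit count is not a robust invariant -- so there is no clean discrepancy to exploit. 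Second, even in the eigenvalue-$1$ case, you must say precisely which conjugacy/equivalence invariant is being violated, and show that your unfolding changes \emph{that} invariant for the whole flow, not merely ``locally near $\gamma$''; topological equivalence preserves the global orbit structure but does not a priori localize. Neither \cite{R0} nor \cite{BR2} supplies such an unfolding-plus-counting argument; \cite{BR2} proves ``star $\Rightarrow$ Anosov'' in dimension $3$, not ``structurally stable $\Rightarrow$ star''.

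The paper sidesteps the issue entirely. It first proves (Claim~\ref{strucclaim}) that structural stability forbids singularities, using Lemma~\ref{linearsing}, Theorem~\ref{vivier} and the fact that a parabolic closed orbit obstructs structural stability (Robinson, \cite{R}); this last fact is the only place where a non-hyperbolic orbit is produced, and crucially it is of a very specific kind ($P^{\pi}=\mathrm{id}$) arising from the dichotomy in Theorem~\ref{BGV}, whose bifurcation into a parabolic orbit is classical. It then perturbs $X$ to a nearby $Y\in\mathcal{V}\cap\mathcal{W}\cap\mathcal{PR}^1_\mu(M)$ (so $Y$ has dense periodic orbits and is still within the stability neighborhood $\mathcal{V}$), applies Theorem~\ref{BGV} again to obtain an $\ell$-dominated splitting over the long closed orbit of $Y$, extends it over $M$ as in Lemma~\ref{Gnosing}, and then runs the argument of Section~\ref{th1} to show $Y$ is Anosov. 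The output of this route is inherently about the approximating $Y$, not about $X$, which is exactly why the theorem is stated as an approximation statement. You should abandon the ``structurally stable $\Rightarrow$ star'' shortcut and instead reproduce the Theorem~\ref{BGV} dichotomy argument for a nearby $Y$ as the paper does.
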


Nevertheless, the $C^r$-structural stability conjecture remains wide open for higher topologies ($r\geq2$). This is explained, in particular, because many of the $C^1$-perturbation arguments, as the closing lemma, the connecting lemma and the Franks lemma, are either unknown or they are false in higher topologies
(see further details in \cite{Gu,Pu,PS2}).

At the second half of the 1960's, it was already clear that uniform hyperbolicity could not be presented for every system of a dense subset in the universe of all dynamics. So, it triggered the start of the search of a answer to the question: Is it possible to look for a general scenario for dynamics? 
This search draw the attention to homoclinic orbits, that is, orbits that in the past and in the future converge to the same periodic orbit, which has been first considered by Poincar\'{e}, almost a century before. 
The creation or destruction of such orbits is, roughly speaking, what its meant by homoclinic bifurcations (see \cite{PT}). 
Based on these and other subsequent developments, Palis formulated, in the 1990's, the following conjecture (see \cite{PT,P}):

\begin{conjecture}
The diffeomorphisms exhibiting a homoclinic bifurcation are $C^r$-dense in the complement of the closure of the hyperbolic ones, $r\geq1$.
\end{conjecture}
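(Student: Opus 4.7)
The plan is to attack the conjecture by contradiction: assume $f$ lies in a $C^r$-open set $\mathcal{U}$ of diffeomorphisms none of which exhibits a homoclinic bifurcation (neither a homoclinic tangency nor a heterodimensional cycle), and deduce that every $g\in\mathcal{U}$ must lie in the closure of the hyperbolic ones. The strategy I would follow is an extension of the Ma\~n\'e--Pujals--Sambarino program, using the absence of bifurcations in $\mathcal{U}$ as the structural hypothesis that forces global hyperbolicity.

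First, I would reduce to $C^1$-generic $f\in\mathcal{U}$ and invoke Pugh's closing lemma together with Hayashi's connecting lemma: generically the non-wandering set equals $\overline{Per(f)}$, and stable/unstable manifolds of saddles can be made to intersect after an arbitrarily small perturbation. Combined with the standing assumption that no heterodimensional cycle appears in $\mathcal{U}$, this forces every chain recurrence class to contain periodic points of a single stable index. Then, via Gourmelon's refined version of Franks' lemma, I would establish a dominated splitting over each chain class, with the dimensions of the subbundles dictated by the common index of the periodic points inside it.

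Second, I would upgrade the dominated splitting to uniform hyperbolicity. The key ingredient is a higher-dimensional version of Pujals--Sambarino's theorem: if one of the bundles in the finest dominated splitting fails to be uniformly contracting (resp.\ expanding), then inside any $C^1$-neighborhood one can manufacture a homoclinic tangency along a periodic orbit whose center-stable (resp.\ center-unstable) Lyapunov exponent is weak. Under our assumption this is forbidden, so the bundles are hyperbolic, and the union of the chain recurrence classes is a hyperbolic set. A standard no-cycle argument (using again the absence of heterodimensional cycles) would then upgrade this to Axiom A with no cycles, placing $f$ in the interior of the hyperbolic set and contradicting the choice of $\mathcal{U}$ outside its closure.

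The main obstacle, which I expect to be genuinely insurmountable with current technology, is the case $r\geq 2$. Every tool in the above program --- Pugh's closing lemma, Hayashi's connecting lemma, Franks' lemma, Pujals--Sambarino --- is either known to fail or remains wide open in the $C^r$ topology for $r\geq 2$; without them one cannot even promote an asymptotic recurrence into a genuine periodic orbit, much less perturb the derivative along it to synthesize a tangency. Accordingly, I would aim to settle the case $r=1$ by the dichotomy above and regard $r\geq 2$ as requiring a fundamentally new perturbation or renormalization scheme that bypasses these $C^1$ techniques; a partial result might be obtained by restricting attention to tame classes of diffeomorphisms (partially hyperbolic with one-dimensional center, or surface diffeomorphisms with a dominated splitting) where $C^2$ analogues of some of the above ingredients are available.
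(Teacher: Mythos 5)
This statement is Palis's conjecture, which the paper does not prove and does not claim to prove; it is quoted as motivating background. The paper only records the known partial results: Pujals--Sambarino settled the $C^1$ surface case, Arroyo--Hertz gave the analogue for three-dimensional flows, and Bessa--Rocha the volume-preserving diffeomorphism case. The paper's own contribution (Theorem \ref{mainth2}) is the divergence-free flow version in dimension $\geq 4$, where the homoclinic-tangency alternative is ruled out and only the heterodimensional-cycle alternative survives. So there is no ``paper's own proof'' to compare against; your task here should have been recognized as writing down a survey of the state of the art rather than a proof.

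As for the content of your sketch: it is a fair outline of the $C^1$ program (Pugh closing lemma and Hayashi connecting lemma to make periodic points dense with controlled indices; absence of heterodimensional cycles to get index rigidity inside chain classes; Gourmelon/Franks-type perturbation to obtain dominated splittings; a Pujals--Sambarino-type mechanism to promote domination to hyperbolicity). But two things are understated. First, the decisive step --- ``if a bundle in the finest dominated splitting is not uniformly hyperbolic, one can create a tangency'' --- is precisely the hard theorem, not a black box one can invoke for free in arbitrary dimension; at the time of this paper that step was only available in restricted settings, and treating it as known circularly assumes the bulk of the conjecture. Second, your final claim that the dichotomy ``settles the case $r=1$'' overreaches: even at $r=1$ the full-generality statement for diffeomorphisms was open when this paper was written, which is exactly why it remains labeled a conjecture. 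Your assessment of the $r\geq 2$ case as out of reach with current tools is correct and matches the paper's own remark that closing, connecting, and Franks lemmas are unknown or false in higher regularity.
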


Pujals and Sambarino (see \cite{PS}) provided a proof of this conjecture in the case of diffeomorphisms defined on a compact surface in the $C^1$ topology. Recently, Bessa and Rocha proved this conjecture for volume-preserving diffeomorphisms on \cite{BR3}. 
The authors show that a volume-preserving diffeomorphism can be $C^1$-approximated by an Anosov volume-preserving map, or else by a volume-preserving diffeomorphism displaying a heterodimensional cycle. 
The authors also show a similar result for symplectomorphisms.

On \cite{AH}, Arroyo and Hertz proved an analogous statement of the previous conjecture in the context of $C^1$-vector fields defined on a three-dimensional, compact manifold. In this context, besides \textit{homoclinic tangencies}, the \textit{singular cycles} are another homoclinic phenomenon that must be considered:  

\begin{theorem}
Any vector field $X\in \mathfrak{X}^1(M^3)$ can be approximated by another one $Y\in \mathfrak{X}^1(M^3)$ showing one of the following phenomena:
\begin{enumerate}
	\item Uniform hyperbolicity with the no-cycles condition;
	\item A homoclinic tangency;
	\item A singular cycle.
\end{enumerate}
\end{theorem}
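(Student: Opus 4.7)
I would argue by contrapositive. Fix $X\in\mathfrak{X}^1(M^3)$ admitting a $C^1$-neighborhood $\mathcal{U}$ that is disjoint both from the closure of the set of vector fields exhibiting a homoclinic tangency and from the closure of the set of vector fields exhibiting a singular cycle; the goal is to produce $Y\in\mathcal{U}$ satisfying Axiom A together with the no-cycle condition. As a first step, I would verify that $\mathcal{U}\subset\mathcal{G}^1(M^3)$: any $Y\in\mathcal{U}$ with a non-hyperbolic critical element could be perturbed, via a Franks-type lemma for flows, into a nearby vector field exhibiting a homoclinic tangency, contradicting the first exclusion. After shrinking $\mathcal{U}$ if necessary, every $Y\in\mathcal{U}$ is thus a star vector field.

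\emph{Structural analysis on $\mathcal{U}$.} Next I would take a $C^1$-generic $Y\in\mathcal{U}$, in particular Kupka-Smale and with lower-semicontinuous chain-recurrent classes. For a chain-recurrent class $C$ of $Y$ containing a singularity $\sigma$, the second exclusion combined with Hayashi's connecting lemma forces $\sigma$ to be isolated in the non-wandering set: if $\sigma$ were accumulated by recurrent regular orbits of $C$, a small perturbation could link an invariant manifold of $\sigma$ to one of a hyperbolic periodic orbit of $C$, yielding a singular cycle inside $\mathcal{U}$. On the non-singular part of the non-wandering set, the star property produces, via Liao-Ma\~n\'e-Gan-Wen style arguments, a dominated splitting for the two-dimensional linear Poincar\'e flow. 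Because this splitting has one-dimensional factors, the absence of homoclinic tangencies upgrades it to a hyperbolic one, along the lines of the Pujals-Sambarino dichotomy for surface diffeomorphisms adapted to three-dimensional flows. The resulting picture is Axiom A, and the no-cycle condition is automatic: a cycle among basic sets of $Y$ would re-create either a tangency or a singular cycle, contradicting the choice of $\mathcal{U}$.

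\emph{Main obstacle.} The most delicate ingredient is controlling the linear Poincar\'e flow near singularities. Even once $Sing(Y)$ has been isolated in the non-wandering set, the Poincar\'e bundles degenerate as one approaches $\sigma\in Sing(Y)$, so one must work with an extension of the linear Poincar\'e flow that survives at singularities, in the spirit of the Li-Gan-Wen construction mentioned in the Introduction, and verify that the dominated decomposition does not blow up under the flow. Equally subtle is the use of the connecting lemma in the structural step: the perturbations producing a putative singular cycle must be realized \emph{inside} $\mathcal{U}$, without inadvertently destroying the hyperbolicity of other critical elements of $Y$, and one must also rule out the possibility of a robustly non-trivial singular hyperbolic class (of Lorenz type) by showing that its presence would, after an additional arbitrarily small perturbation, force a singular cycle.
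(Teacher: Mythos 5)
This statement is Theorem~2.4 of the paper, which the author cites from Arroyo and Hertz~\cite{AH} without giving a proof; there is therefore no internal proof in the present paper to compare against, and your sketch must be measured against the original Arroyo--Hertz argument.

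At the level of strategy, your outline does identify the right ingredients (star condition obtained by perturbation, dominated splitting for the linear Poincar\'e flow on the nonsingular part via Liao--Ma\~n\'e arguments, promotion of domination to hyperbolicity in the spirit of Pujals--Sambarino, and the extended Poincar\'e flow of Li--Gan--Wen to control singularities), and it correctly isolates the singular case as the crux. There are, however, two concrete gaps. First, in the step ``$\mathcal{U}\subset\mathcal{G}^1(M^3)$'', a non-hyperbolic \emph{singularity} cannot in general be perturbed into a homoclinic tangency: a singularity with an eigenvalue on the imaginary axis produces, after perturbation, a singular bifurcation (and in particular is the natural source of singular cycles), not a tangency between invariant manifolds of a hyperbolic periodic orbit. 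So you need to invoke \emph{both} exclusions in the hypothesis (no tangencies and no singular cycles) to kill non-hyperbolic critical elements, not only the first. Second, and more substantively, the sentence ``the resulting picture is Axiom A'' conceals the entire body of the Arroyo--Hertz proof: they must show that, far from tangencies and singular cycles, chain-recurrence classes containing singularities either are reduced to the singularity itself or force a singular cycle after perturbation, and this requires a detailed analysis of Lorenz-like sets and of how the Poincar\'e domination interacts with the tangent-space behavior at the singularity. Your ``main obstacle'' paragraph names this difficulty but does not resolve it; as written, the argument would not let one conclude that the nonwandering set is hyperbolic once singularities are present. To tighten the sketch you would need, at a minimum, a precise statement of the extended linear Poincar\'e flow domination at singularities and a lemma ruling out robustly non-trivial singular classes in the complement of the closure of singular cycles, which is essentially Sections~3--4 of~\cite{AH}.
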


On the conservative setting, Bessa and Rocha, considering a three-dimensional manifold $M$, proved the next result on \cite{BR1}:

\begin{theorem}
Any vector field $X\in\mathfrak{X}^1_{\mu}(M^3)$ can be $C^1$-approximated by another one $Y\in\mathfrak{X}^1_{\mu}(M^3)$ which is Anosov or else has a homoclinic tangency.
\end{theorem}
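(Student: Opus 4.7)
The plan is to argue the contrapositive: assume $X\in\mathfrak{X}^1_{\mu}(M^3)$ has a $C^1$-neighborhood $\mathcal{U}$ no element of which exhibits a homoclinic tangency, and produce $Y\in\mathcal{U}\cap\mathcal{A}_{\mu}(M^3)$. By Theorem \ref{BRGA} it is enough to show that, after possibly shrinking $\mathcal{U}$ around a convenient perturbation of $X$, one has $\mathcal{U}\subset\mathcal{G}^1_{\mu}(M^3)$: in dimension three every divergence-free star vector field is Anosov. The whole burden of the proof is thus to convert the absence of tangencies in $\mathcal{U}$ into hyperbolicity of every critical element of every $Y\in\mathcal{U}$.

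To carry out this conversion, I would fix $Y_0\in\mathcal{U}$ and, towards a contradiction, suppose it has a non-hyperbolic closed orbit $\gamma$; the case of a non-hyperbolic singularity is easier, since the trace-zero condition for divergence-free fields already yields, generically, a neutral eigendirection that can be pushed to the imaginary axis by a small conservative perturbation. Since the linear Poincar\'e flow along $\gamma$ is an area-preserving linear cocycle on the two-dimensional normal bundle $N_{\gamma}$, the derivative of the first return map has determinant one, so its eigenvalues lie on the unit circle. A divergence-free analogue of Franks' lemma, in the spirit of the one used in \cite{BR2}, then allows me to realise, by an arbitrarily $C^1$-small divergence-free perturbation of $Y_0$ supported in a flow-box around $\gamma$, any sufficiently close area-preserving linear cocycle over $\gamma$. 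Applying this twice, I would first bring the rotation number to a rational value so that $\gamma$ may be unfolded into a pair of hyperbolic saddles $p,q$ with prescribed stable/unstable directions, and then perform a second small conservative perturbation that makes a branch of $W^u(p)$ touch a branch of $W^s(p)$ (or of $W^s(q)$) quadratically. A conservative version of Hayashi's connecting lemma, available in the divergence-free setting, would then globalise this local tangency into a genuine homoclinic tangency of a divergence-free vector field still lying in $\mathcal{U}$, contradicting the choice of $\mathcal{U}$.

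Once the reduction is accomplished the argument closes in one line: every $Y\in\mathcal{U}$ lies in $\mathcal{G}^1_{\mu}(M^3)=\mathcal{A}_{\mu}(M^3)$ by Theorem \ref{BRGA}, so $X$ is $C^1$-approximated by a divergence-free Anosov vector field, as required.

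The main obstacle I expect is the tangency-creation step. In the dissipative world one can locally perturb the derivative of the return map almost freely to align stable and unstable subspaces; here every admissible perturbation must preserve the volume form, which rigidly couples the linear data along the whole orbit. The conservative Franks lemma of \cite{BR2} provides the right pointwise flexibility, but the real technical effort will lie in controlling the geometry of the invariant manifolds of $p$ and $q$ after the perturbation, so that the tangency is genuinely produced and is not destroyed by the subsequent connecting-lemma step nor by a later small divergence-free perturbation.
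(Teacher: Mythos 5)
This theorem is quoted in the paper as a result of Bessa and Rocha \cite{BR1}, and the paper under review does not reproduce its proof, so there is no internal proof to compare against. Taking your proposal on its own merits, it is a sensible high-level strategy (reduce ``far from tangency'' to the star property and then invoke Theorem~\ref{BRGA}), and it would constitute a genuinely different route from the original one in \cite{BR1}, which predates \cite{BR2} and therefore cannot appeal to it. However, there are two concrete gaps.

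First, and most seriously, your reduction ``$\mathcal U$ tangency-free $\Rightarrow \mathcal U\subset\mathcal G^1_\mu(M^3)$'' cannot be established by treating only non-hyperbolic critical elements. By Theorem~\ref{BRGA} (equivalently, Lemma~\ref{Gnosing}), a divergence-free star vector field has \emph{no} singularities whatsoever. So if some $Y\in\mathcal U$ has a singularity, even a hyperbolic one, then $Y\notin\mathcal G^1_\mu(M^3)$ and the desired inclusion fails. A hyperbolic singularity persists under $C^1$-small perturbations, so shrinking $\mathcal U$ does not help, and no tangency is manufactured from a hyperbolic zero. The real work is to show that ``far from tangency'' already forces the absence of singularities; this requires a Vivier-type incompatibility argument (domination along long closed orbits versus the presence of a linear hyperbolic saddle-type zero), analogous to Lemma~\ref{Gnosing}. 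Your aside that the singularity case is ``easier'' because the trace-zero condition ``yields a neutral eigendirection'' is mistaken: for a generic conservative singularity in $\mathbb R^3$ the spectrum is $\{a,b,-(a+b)\}$ with all entries nonzero, and there is no neutral direction to speak of.

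Second, the step from a non-hyperbolic closed orbit to a bona fide homoclinic tangency is far more delicate than ``bring the rotation to a rational, unfold into two saddles, align manifolds, connect.'' Perturbing an elliptic closed orbit in a conservative flow into a pair of hyperbolic saddles with prescribed stable/unstable directions is not something Franks' lemma gives for free: the area-preserving unfolding generically produces resonant island chains, and extracting saddles together with a controllable tangency is exactly the content of Duarte/Moreira-type results for area-preserving surface maps (or their flow analogues). Moreover, the conservative connecting lemma produces orbit connections, but does not by itself preserve a quadratic tangency created beforehand; making the local tangency survive the globalisation is a substantial technical point that you flag but do not resolve. Both of these ingredients are essential for the dichotomy and need to be spelled out for the argument to go through.
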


On that paper, the authors left open the following question: can any $X\in\mathfrak{X}^1_{\mu}(M^n)$ be $C^1$-approximated by a divergence-free vector field exhibiting some form of hyperbolicity in $M^n$, or by one exhibiting homoclinic tangencies or else by one having a heterodimensional cycle (see Definition \ref{heterocycle} bellow), for $n\geq 4$? 
In the present paper we also answer to this question.

\begin{maintheorem}\label{mainth2}
If $X\in\mathfrak{X}^1_{\mu}(M^n)$ then $X$ can be $C^1$-approximated by an Anosov divergence-free vector field, or else by a divergence-free vector field exhibiting a heterodimensional cycle, $n\geq 4$.
\end{maintheorem}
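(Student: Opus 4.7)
The plan is a dichotomy based on whether $X$ lies in the $C^1$-closure of $\mathcal{A}_{\mu}(M^n)$. If it does, there is nothing to prove. Otherwise, applying Main Theorem~\ref{mainth} together with the $C^1$-openness of $\mathcal{G}^1_{\mu}(M^n)$, we obtain a $C^1$-neighborhood $\mathcal{U}\subset\mathfrak{X}^1_{\mu}(M^n)$ of $X$ disjoint from the divergence-free star set. The goal then reduces to producing, $C^1$-arbitrarily close to $X$, a divergence-free vector field exhibiting a heterodimensional cycle.

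First I would pick $Y\in\mathcal{U}$ which is Kupka-Smale (using residuality of $\mathcal{KS}^1_{\mu}(M^n)$ in $\mathfrak{X}^1_{\mu}(M^n)$) and $C^1$-close to $X$. Because $Y$ is not a star vector field, Definition~\ref{starflowdef} supplies an arbitrarily $C^1$-small divergence-free perturbation $Z$ of $Y$ with a non-hyperbolic critical element $\gamma$, which must arise as a bifurcation from a hyperbolic critical element of $Y$. Since the transverse linear behavior along $\gamma$ preserves the transverse volume, the non-hyperbolic block of the linear Poincar\'e map has determinant one; generically it consists of a pair of complex conjugate eigenvalues $e^{\pm i\theta}$ on the unit circle, or of a real eigenvalue $\pm 1$ with multiplicity at least two.

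Second, I would invoke a divergence-free Franks-type lemma along $\gamma$ (in the spirit of the conservative perturbation lemmas underlying Theorem~\ref{BRGA} and Theorem~\ref{mainth}) to perform coordinated local perturbations of the transverse spectrum inside $\mathfrak{X}^1_{\mu}(M^n)$. Because $n\geq 4$, the cross-section has dimension at least three, and the volume-preserving constraint still leaves room to push the non-hyperbolic eigenvalues inside the unit disk while simultaneously pushing a compensating pair outside. This yields a single divergence-free vector field $W$, $C^1$-close to $X$, that carries two hyperbolic closed orbits $\gamma_1,\gamma_2$ of different stable indices whose orbits shadow $\gamma$ and hence remain in a common small tubular neighborhood of the original critical element.

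Finally, I would apply the divergence-free connecting lemma (in the conservative form used by Bessa and Rocha in the three-dimensional setting) to build, after one more $C^1$-small divergence-free perturbation, a heteroclinic intersection of $W^u(\gamma_1)$ with $W^s(\gamma_2)$ and another of $W^u(\gamma_2)$ with $W^s(\gamma_1)$. The resulting vector field lies $C^1$-arbitrarily close to $X$ and displays a heterodimensional cycle between the continuations of $\gamma_1$ and $\gamma_2$. The main obstacle is the intermediate step: arranging the Franks-type perturbation so that two saddles of different indices coexist inside $\mathfrak{X}^1_{\mu}(M^n)$ and remain close enough in phase space that the conservative connecting lemma can bridge them. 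The volume-preserving constraint couples the eigenvalue motions and forces a careful local choice of the perturbation so that the two continuations do not collapse into a single index.
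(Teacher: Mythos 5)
Your overall architecture is sound: reduce to the case where $X$ is $C^1$-far from the star set, manufacture two hyperbolic saddles with different indices, and close them up with the conservative connecting lemma. The dichotomy you set up and the final step are exactly parallel to what the paper does. The genuine gap is in your middle step.

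You propose to obtain the two saddles $\gamma_1,\gamma_2$ of different indices from a \emph{single} non-hyperbolic critical element $\gamma$ by a ``Franks-type'' perturbation of the transverse spectrum. But the Franks lemma (Theorem~\ref{Franks}) preserves the prescribed periodic orbit and only alters the linear Poincar\'e flow along it; it does not create a second orbit. Perturbing $\gamma$'s spectrum yields exactly one hyperbolic closed orbit, and a single orbit does not admit two distinct stable indices. What you describe---a non-hyperbolic orbit unfolding into a pair of hyperbolic orbits of different indices living in the same vector field---is a local \emph{bifurcation} statement, not a Franks-lemma statement, and you give no mechanism for it. You also do not address the case where the non-hyperbolic critical element produced by failure of the star property is a singularity rather than a closed orbit, which requires a different perturbation.

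The paper circumvents this by arranging for two distinct critical elements to be present \emph{before} any Franks-type perturbation is made. In Lemma~\ref{FCG} the perturbed field $Y$ is chosen in $\mathcal{KS}^1_{\mu}(M)\cap\mathcal{PR}^1_{\mu}(M)$, so the (conservative) general density theorem supplies a hyperbolic closed orbit $p_Y$ of some index $u$. Then, since $Y$ is not in $\overline{\mathcal{G}^1_{\mu}(M)}$, a second nearby field $Z$ carries both the hyperbolic continuation $p_Z$ (still index $u$) and some \emph{other} non-hyperbolic critical element $q_Z$. Only at that point is a Franks/Zuppa perturbation used, on $q_Z$ alone, to hyperbolize it with an index different from $u$. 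With two pre-existing saddles of different indices, Lemma~\ref{constantind}'s topological-mixing-plus-connecting-lemma argument produces the heterodimensional cycle. To repair your proof you should replace the ``two orbits from one Franks perturbation'' step by this use of a Pugh--Robinson generic orbit of fixed index coexisting with the non-hyperbolic element.

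A minor further point: your dichotomy should be phrased via $\mathcal{FC}^1_{\mu}(M)$ rather than via the closure of $\mathcal{A}_{\mu}(M^n)$. The hypothesis ``$X$ cannot be $C^1$-approximated by a field with a heterodimensional cycle'' is exactly $X\in\mathcal{FC}^1_{\mu}(M)$, and $\mathcal{FC}^1_{\mu}(M)$ is open, which is what lets you conclude $X$ is approximable by some $Y\in\mathcal{FC}^1_{\mu}(M)\cap\mathcal{G}^1_{\mu}(M)$ and then invoke Theorem~\ref{mainth}. Starting from the closure of the Anosov set gives you only one horn of the dichotomy for free and does not immediately place you in the position of being far from heterodimensional cycles.
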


This result rules out the $C^1$-approximation by a vector field exhibiting a homoclinic tangency in the higher-dimensional divergence-free setting. 

This article is organized in five additional sections. 
In Section \ref{auxresults}, we have compiled some definitions and auxiliary results, that will be used to prove the main theorems. 
Section \ref{th1} presents the proof of Theorem~\ref{mainth}, which uses the results proved in Section \ref{auxlemmas}, and also contains the proof of Corollary \ref{maincor}. 
The proof of Theorem~\ref{structh} is provided in Section \ref{strstasec}. 
Finally, in Section~\ref{th2} we prove some auxiliary results that, jointly with Theorem~\ref{mainth}, allow us to easily conclude the proof of Theorem \ref{mainth2}.
\end{section}


\begin{section}{Definitions and auxiliary results}\label{auxresults}

In this section, we state some definitions and present some results that will be used in the proofs.

Let $\mathcal{O}_X(p)$ denote the orbit of $p\in Crit(X)$ and $\pi_X(p)$ its period. By period, we mean the least period. 
If $p$ is a singularity of $X$, we set $\pi_X(p)=0$ and $\mathcal{O}_X(p)=p$ and if $\mathcal{O}_X(p)$ is a hyperbolic set, its \textit{stable} and \textit{unstable} manifolds are defined as
$$W_X^s(\mathcal{O}_X(p))=\{q\in M: dist (X^t(q),\mathcal{O}_X(p))\rightarrow 0, t\rightarrow +\infty\} \ \ \text{and}$$
$$W_X^u(\mathcal{O}_X(p))=\{q\in M: dist (X^{-t}(q),\mathcal{O}_X(p))\rightarrow 0, t\rightarrow +\infty\}.$$ 

We observe that both $W_X^s(\mathcal{O}_X(p))$ and $W_X^u(\mathcal{O}_X(p))$ do not depend on $q\in \mathcal{O}_X(p)$. 
Therefore, we can write $W_X^s(\mathcal{O}_X(p))=W_X^s(q)$ and $W_X^u(\mathcal{O}_X(p))=W_X^u(q)$, for some $q\in \mathcal{O}_X(p)$.
These manifolds are respectively tangent to the subspaces $E_q^s\oplus \mathbb{R}X(q)$ and $\mathbb{R}X(q)\oplus E^u_q$ of $T_qM$, $q\in \mathcal{O}_X(p)$. Observe that $$\dim(W_X^s(\mathcal{O}_X(p)))+\dim(W_X^u(\mathcal{O}_X(p)))=\dim(M)+1.$$

Take $p\in Crit(X)$ a hyperbolic saddle for a vector field $X\in\mathfrak{X}^1(M)$. 
The \textit{index of $p$} is defined as the dimension of the unstable bundle $W_X^u(p)$ and will be denoted by $ind(p)$. 
Now, we state the notion of heterodimensional cycle.

\begin{definition}\label{heterocycle}
Take $X\in\mathfrak{X}^1(M)$ and let $p,q$ be two distinct hyperbolic critical points of saddle type such that $ind(p)<ind(q)$. A vector field $X$ exhibits a \textbf{\textit{heterodimensional cycle}} associated to $p$ and $q$ if the invariant manifolds of $p$ and $q$ intersect cyclically, that is $$W_X^s(p)\ti W_X^u(q)\neq \emptyset \text{ and } W_X^u(p)\cap W_X^s(q)\neq \emptyset,$$ where $\ti$ means that the intersection is transversal. 
\end{definition}

This definition can be trivially extended to a finite number of hyperbolic saddles.

\begin{remark}
The condition $ind(p)<ind(q)$, stated in the previous definition, ensures that the connection $W_X^s(p)\ti W_X^u(q)$ is \textit{$C^1$-persistent} and that the connection $W_X^u(p)\cap W_X^s(q)$ is \textit{not $C^1$-persistent}. 
\end{remark}

Let $\mathcal{HC}^1(M)$ and $\mathcal{HC}^1_{\mu}(M)$ denote the subsets of $\mathfrak{X}^1(M)$ and $\mathfrak{X}^1_{\mu}(M)$, respectively, whose elements exhibit heterodimensional cycles. A heterodimensional cycle is said to be \textit{periodic} if it is composed just by closed orbits, \textit{singular} if it is composed just by singularities and \textit{mixed} if it contains at least one singularity and one closed orbit.

A vector field $X\in\mathfrak{X}^1(M)$ is said to be \textit{far from heterodimensional cycles}, say $X\in \mathcal{FC}^1(M)$, if there exists a $C^1$-neighborhood $\mathcal {U}$ of $X$ on $\mathfrak{X}^1(M)$ such that every $Y\in\mathcal{U}$ does not exhibit heterodimensional cycles. 
Moreover, if $X$ and $\mathcal {U}$ are taken in $\mathfrak{X}^1_{\mu}(M)$ and $X$ satisfy the previous property, we say that $X\in\mathcal{FC}_{\mu}^1(M)$.

\begin{remark} We point out that:
\begin{itemize}
\item Heterodimensional cycles do not exist if $\dim(M)<3$ because, in this case, we cannot find hyperbolic critical points of saddle-type and with different indices.
\item If $\dim(M)=3$, $M$ does not support periodic heterodimensional cycles since, in this case, the stable and the unstable manifolds of any closed orbit are both two-dimensional. 
However, it is possible to find singular heterodimensional cycles and also mixed heterodimensional cycles, where a link connecting two closed orbits is not allowed. 
Mixed heterodimensional cycles just appear in the case that the singularities have index $2$, since the index of every closed orbit is $1$.
\end{itemize}
\end{remark}

The first auxiliary result stated is due to Zuppa (see \cite{Z}) and allows us to $C^1$-ap\-prox\-i\-mate any divergence-free vector field by a $C^{\infty}$ one keeping the divergence-free property.

\begin{theorem}\label{zuppa}
The set of $C^{\infty}$ divergence-free vector fields is $C^1$-dense in $\mathfrak{X}^1_{\mu}(M)$.
\end{theorem}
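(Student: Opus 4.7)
The plan is to identify $\mathfrak{X}^1_\mu(M)$ with a space of closed differential forms and then smooth the forms while preserving closedness. Fix the volume form $\Omega$ on $M$, and consider the contraction map $X \mapsto \omega_X := i_X \Omega$. Since $\mathrm{div}(X)\,\Omega = \mathcal{L}_X \Omega = d(i_X \Omega)$, $X$ is divergence-free iff $\omega_X$ is a closed $(n-1)$-form, and because $\Omega$ is smooth the assignment $X \mapsto \omega_X$ is a linear homeomorphism in every $C^k$ norm. Thus it suffices to prove that smooth closed $(n-1)$-forms are $C^1$-dense in the space of $C^1$ closed $(n-1)$-forms on $M$.

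Fix such an $\omega = \omega_X$. By the Hodge decomposition applied to $\omega$, using the Riemannian structure on $M$, one writes $\omega = \alpha + d\beta$, where $\alpha := H(\omega)$ is the harmonic projection (hence automatically smooth, since harmonic forms are smooth) and $\beta := d^*G\omega$, with $G$ the Green operator of the Hodge Laplacian; the exact component $d^*d G\omega$ vanishes because $\omega$ is closed and $G$ commutes with $d$. The key regularity input is that $\beta$ gains derivatives over $\omega$: since $G$ is of order $-2$ and $d^*$ of order $1$, elliptic regularity yields $\beta \in C^2$ (passing through $C^{k,\alpha}$ Hölder spaces if a cleaner estimate is wanted). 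In particular, $\beta$ is one order more regular than $\omega$ itself.

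Next, I would smooth $\beta$ in the $C^2$ topology by standard means: cover $M$ with finitely many coordinate charts, split $\beta$ using a subordinate partition of unity, and convolve each local piece with a compactly supported mollifier $\rho_\varepsilon$, obtaining a smooth $(n-2)$-form $\tilde\beta$ with $\|\tilde\beta - \beta\|_{C^2}$ as small as desired. Setting $\tilde\omega := \alpha + d\tilde\beta$, the form $\tilde\omega$ is smooth and closed, and
\begin{equation*}
\|\tilde\omega - \omega\|_{C^1} \;=\; \|d\tilde\beta - d\beta\|_{C^1} \;\leq\; C\,\|\tilde\beta - \beta\|_{C^2}
\end{equation*}
can be made arbitrarily small. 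The unique smooth vector field $\tilde X$ determined by $i_{\tilde X}\Omega = \tilde\omega$ is then divergence-free and $C^1$-close to $X$, completing the proof.

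The main obstacle is the smoothing step itself. One cannot simply mollify $\omega$ directly in local charts, because a partition-of-unity construction does not commute with the exterior derivative and will in general destroy closedness (errors of the form $d\chi_i \wedge \omega$ appear at the chart interfaces). The fix is to pass to the potential $\beta$, where closedness of $d\tilde\beta$ holds automatically, at the cost of needing enough regularity on $\beta$, namely $C^2$, so that $d\tilde\beta$ approximates $d\beta = \omega - \alpha$ in $C^1$. This regularity gain is precisely what the Hodge construction furnishes through elliptic estimates, and is where the real technical content of the argument lies.
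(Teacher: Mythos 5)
The paper does not actually prove this statement; it cites Zuppa~\cite{Z}. So your argument is necessarily independent of the paper's. Your reduction via $X\mapsto i_X\Omega$ to closed $(n-1)$-forms, and the Hodge splitting $\omega=\alpha+d\beta$ with $\alpha=H\omega$ smooth and $\beta=d^*G\omega$, are both sound. The gap is the regularity claim for $\beta$. The operator $d^*G$ has order $-1$ and gains a derivative on the H\"older scale $C^{k,\gamma}\to C^{k+1,\gamma}$ for $0<\gamma<1$, but \emph{not} at the integer endpoints: from $\omega\in C^1$ you only obtain $\beta\in C^{1,\gamma}$ for every $\gamma<1$, and in general $\beta\notin C^2$. (This is the same endpoint failure of Schauder as the classical fact that $\Delta u=f$ with $f\in C^0$ need not have a $C^2$ solution; note also that knowing $d\beta=\omega-\alpha\in C^1$ together with $d^*\beta=0$ is first-order elliptic data in $C^1$, which again only yields $\beta\in C^{1,\gamma}$.) Consequently the final inequality $\|d\tilde\beta-d\beta\|_{C^1}\le C\,\|\tilde\beta-\beta\|_{C^2}$ is not available: mollification of a merely $C^{1,\gamma}$ potential converges to it in $C^{1,\gamma}$ but not in $C^2$, so you do not obtain $d\tilde\beta\to d\beta$ in $C^1$, which is exactly what the theorem requires.

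A repair in the same spirit is to mollify first and correct second, arranging the divergence error to be $C^1$-small rather than merely $C^0$-small before invoking any elliptic estimate. In local coordinates where $\Omega=h_i\,dx^1\wedge\cdots\wedge dx^n$, mollify the Euclidean-divergence-free components $h_iX^j$; this produces in each chart a smooth field that is exactly $\Omega$-divergence-free there. Patching with a partition of unity $\{\chi_i\}$ yields a smooth $X_\epsilon$ with $\operatorname{div}_\Omega X_\epsilon=\sum_i d\chi_i(X^{(i)}_\epsilon-X)$, and since $\sum_i d\chi_i\equiv0$ and each $X^{(i)}_\epsilon\to X$ in $C^1$ on $\operatorname{supp}d\chi_i$, this divergence tends to $0$ in $C^1$. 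Solving the elliptic equation $\operatorname{div}_\Omega(\nabla f_\epsilon)=\operatorname{div}_\Omega X_\epsilon$ then gives $f_\epsilon\to0$ in $C^{2,\gamma}$ by Schauder (the data is now genuinely H\"older, not just $C^0$), so $\nabla f_\epsilon\to0$ in $C^1$, and $X_\epsilon-\nabla f_\epsilon$ is the desired smooth divergence-free $C^1$-approximation of $X$.
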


The next result is a Pasting Lemma (see \cite{AM}) and it allows us to realize $C^1$-local perturbations in the divergence-free setting.

\begin{theorem}\label{pasting}
Given $\epsilon>0$ there exists $\delta>0$ such that if $X\in \mathfrak{X}^{1}_{\mu}(M)$, $K\subset M$ is a compact set and $Y\in \mathfrak{X}^{\infty}_{\mu}(M)$ is $\delta$-$C^1$-close to $X$ in a small neighborhood $U\supset K$, then there exist $Z\in \mathfrak{X}^{\infty}_{\mu}(M)$ and open sets $V$ and $W$, such that $K\subset V\subset U \subset W$, satisfying the properties:
\begin{itemize}
	\item $Z|_V=Y$;
	\item $Z|_{int(W^c)}=X$;
	\item $Z$ is $\epsilon$-$C^1$-close to $X$.
\end{itemize}
\end{theorem}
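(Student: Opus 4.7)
The plan is to interpolate between $X$ and $Y$ not by a naive convex combination (which destroys the divergence-free condition) but at the level of potentials. By Theorem \ref{zuppa}, one may assume at the outset that $X\in\mathfrak{X}^{\infty}_{\mu}(M)$; set $W_0:=Y-X$, a smooth divergence-free vector field on $U$ with $\|W_0\|_{C^1(U)}<\delta$. Under the volume form isomorphism, $W_0$ corresponds to a closed smooth $(n-1)$-form $\omega$ on $U$. Passing to a slightly smaller open tubular neighborhood $U_1\Subset U$ of $K$ whose $(n-1)$-st de Rham cohomology vanishes, the Poincar\'e lemma produces a smooth $(n-2)$-form $\beta$ on $U_1$ with $d\beta=\omega$. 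Fixing a standard gauge (Hodge decomposition with suitable boundary conditions on $U_1$) and invoking Schauder estimates, one arranges
\[
\|\beta\|_{C^1(U_1)}\leq C_0\,\|\omega\|_{C^1(U)}\leq C_0\,\delta,
\]
where $C_0$ depends only on the geometry of $U_1\Subset U$ and not on $Y$.

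Next, select once and for all a smooth cut-off function $\phi:M\to[0,1]$ with $\phi\equiv 1$ on an open set $V$, $K\subset V\Subset U_1$, and $\text{supp}(\phi)\subset U_1$. Define a globally smooth $(n-1)$-form $\eta$ on $M$ by setting $\eta:=d(\phi\beta)$ on $U_1$ and $\eta:=0$ on $M\setminus\text{supp}(\phi)$; the two prescriptions agree on their overlap. By construction, $\eta$ is closed, equals $d\beta=\omega$ on $V$ (where $\phi\equiv 1$), and vanishes outside $\text{supp}(\phi)$. Converting $\eta$ back via the volume isomorphism yields a smooth divergence-free vector field $Z_0$ on $M$ with $Z_0\equiv W_0$ on $V$ and $Z_0\equiv 0$ outside $\text{supp}(\phi)$. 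Setting $Z:=X+Z_0$ gives $Z\in\mathfrak{X}^{\infty}_{\mu}(M)$ with $Z|_V=Y$, while for any open $W$ with $U\cup\text{supp}(\phi)\subset W$ one has $Z|_{\text{int}(W^c)}=X$, fulfilling the topological requirements.

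It remains to verify the $C^1$ estimate. A direct computation gives
\[
\|Z-X\|_{C^1}=\|Z_0\|_{C^1}\leq C_1\,\|d(\phi\beta)\|_{C^1}\leq C_2\,\|\phi\|_{C^2}\,\|\beta\|_{C^1(U_1)}\leq C_3\,\delta,
\]
where $C_3$ depends only on the geometric data $K,U,V,\phi$ fixed in advance, and not on $Y$. Hence, given $\epsilon>0$, one chooses $V,W,\phi$ first and then takes $\delta:=\epsilon/C_3$. The principal obstacle is the elliptic step: producing a potential $\beta$ with $d\beta=\omega$ satisfying a controlled bound $\|\beta\|_{C^1}\leq C\|\omega\|_{C^1}$ requires gauge-fixed Hodge theory on $U_1$ combined with Schauder-type regularity. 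Without such a quantitative potential, one cannot afford to differentiate the cut-off, since $\|\nabla\phi\cdot W_0\|_{C^0}$ alone is not enough to produce a divergence-free correction small in $C^1$. The remaining manipulations (globalization of $\eta$, smoothness across $\partial\,\text{supp}(\phi)$, translation back through the volume isomorphism) are routine.
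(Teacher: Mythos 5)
The paper does not prove this result: it is quoted directly from Arbieto--Matheus \cite{AM}, so there is no in-paper argument to compare against. Judged on its own, your proposal follows the right conceptual path and, in fact, the same one as the cited source — do not cut off the vector field, but pass to the level of the $(n-1)$-form $\iota_{W_0}\mathrm{vol}$, find a primitive with quantitative elliptic control, cut off the primitive, and differentiate back. You also correctly isolate the real technical point, namely the estimate $\|\beta\|_{C^1}\lesssim\|\omega\|_{C^1}$, which one obtains from gauge-fixed Hodge theory with Schauder estimates on the bounded domain; Arbieto--Matheus package this same information as a quantitative right-inverse for the divergence operator (Dacorogna--Moser/Bogovskii type), which is just the PDE-dual phrasing of your Poincar\'e-lemma-with-estimates.

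The one genuine soft spot is the sentence ``Passing to a slightly smaller open tubular neighborhood $U_1\Subset U$ of $K$ whose $(n-1)$-st de Rham cohomology vanishes.'' For a general compact $K$ there need be no such $U_1$, and the hypothesis is not cosmetic: if $H^{n-1}(U_1)\neq 0$ and $\omega|_{U_1}$ represents a nontrivial class, the whole statement can fail. Concretely, take $M=S^1\times S^{n-1}$, $X=0$, $Y=c\,\partial_\theta$ with $c$ small, and $K=\{\mathrm{pt}\}\times S^{n-1}$. Any globally divergence-free $Z$ with $Z=Y$ near $K$ and $Z=0$ far from $K$ would have nonzero flux across $K$ but zero flux across a homologous sphere outside $W$, which is impossible. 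So the vanishing of $[\omega|_{U_1}]$ (equivalently, the absence of a flux obstruction) is an unavoidable hypothesis; in this paper it is always met because $K$ is a compact piece of a single orbit and $U$ a small flowbox, hence contractible. You should flag this explicitly rather than assume it silently. Two smaller points: (i) invoking Zuppa to replace $X$ by a smooth field is in tension with the requirement $Z|_{\mathrm{int}(W^c)}=X$ (indeed the stated conclusion $Z\in\mathfrak{X}^\infty_\mu(M)$ already forces $X$ to be smooth off $W$, which suggests the quoted statement is itself slightly loose about regularity); and (ii) your $\delta=\epsilon/C_3$ depends on $K,U,\phi$, whereas the statement's quantifier order suggests $\delta$ independent of them — again this looks like an imprecision inherited from how the lemma is restated here rather than a defect of your argument, but it is worth being honest that the elliptic constant is geometry-dependent.
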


The following result is a version of Franks' lemma for divergence-free vector fields (see \cite{BR} for more details). 
Under some conditions, it allows us to realize a perturbation on the linear Poincar\'{e} flow by a vector field which is $C^1$-close to the original one.

\begin{theorem}\label{Franks}
Given $\epsilon>0$ and a vector field $X\in\mathfrak{X}_{\mu}^4(M)$, there exists $\xi_0=\xi_0(\epsilon,X)$ such that for any $\tau\in\left[1,2\right]$, for any periodic point $p$ of period greater than $2$, for any sufficient small flowbox $\mathcal{T}$ of $X^{\left[0,\tau\right]}(p)$ and for any one-parameter linear family $\left\{A_t\right\}_{t\in\left[0,\tau\right]}$ such that $\|A_t'A_t^{-1}\|<\xi_0$, for all $\:t\in\left[0,\tau\right]$, there exists $Y\in\mathfrak{X}_{\mu}^1(M)$ satisfying the following properties:
\begin{enumerate}
	\item $Y$ is $\epsilon$-$C^1$-close to $X$;
	\item $Y^t(p)=X^t(p)$, $\forall\: t \in\mathbb{R}$;
	\item $P_Y^{\tau}(p)=P_X^{\tau}(p)\circ A_{\tau}$;
	\item $Y|_{\mathcal{T}^c}=X|_{\mathcal{T}^c}$.
\end{enumerate}
\end{theorem}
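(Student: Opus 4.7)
\begin{rproof}{Theorem \ref{Franks} (proposal)}
The plan is to realize the prescribed perturbation of the linear Poincar\'e flow by a purely local modification in tubular coordinates along the orbit arc $X^{[0,\tau]}(p)$, and then glue it back to $X$ using the Pasting Lemma (Theorem \ref{pasting}). The $C^4$-regularity of $X$ ensures that the tubular coordinates can be chosen sufficiently smooth, while the output need only be $C^1$.

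First I would use the flowbox theorem on $\mathcal{T}$ to produce coordinates $(s,x)\in[0,\tau]\times B^{n-1}(0,r)$ in which $X$ becomes the constant unit vector $e_1=(1,0,\ldots,0)$ and the volume form becomes standard Lebesgue measure (this last normalization uses Moser's trick, which is available because $X$ is divergence-free). In these coordinates the orbit of $p$ is the segment $x=0$, the normal bundle $N_{X^s(p)}$ is identified with $\{s\}\times\mathbb{R}^{n-1}$, and $P_X^s(p)$ is the identity. The target perturbed flow on the tube should be the non-autonomous linear vector field
\begin{equation*}
\widetilde{Y}(s,x)=\bigl(1,\,B(s)\,x\bigr),\qquad B(s):=A_s'\,A_s^{-1},
\end{equation*}
so that its variational equation along $x=0$ is exactly $\dot{A}_s=B(s)A_s$, giving $P_{\widetilde{Y}}^\tau(p)=A_\tau$ and hence the required identity (3). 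Since $\{A_s\}$ lives in $SL(n-1,\mathbb{R})$ (the linear Poincar\'e flow of a divergence-free vector field preserves normal volume), $\mathrm{tr}\,B(s)\equiv 0$, so $\widetilde{Y}$ is divergence-free. The $C^1$-size of $\widetilde{Y}-X$ on the tube is controlled by $\sup_s\|B(s)\|<\xi_0$, so choosing $\xi_0=\xi_0(\epsilon,X)$ small enough makes $\widetilde{Y}$ arbitrarily $C^1$-close to $X$.

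Next I would smooth $\widetilde{Y}$ slightly so that it becomes $C^\infty$ and still divergence-free, $C^1$-close to $X$, and still has the same $1$-jet along $x=0$ (using Theorem \ref{zuppa} plus a standard mollification that leaves a neighborhood of the orbit unchanged); call the smoothed field $\widetilde{Y}^{\infty}$. Applying the Pasting Lemma (Theorem \ref{pasting}) to $X$, $\widetilde{Y}^{\infty}$ and the compact set $K=X^{[0,\tau]}(p)$, contained in the open neighborhood $U\subset\mathcal{T}$ on which these fields are $\delta$-close, yields a divergence-free $C^\infty$ field $Z$ which equals $\widetilde{Y}^{\infty}$ on a smaller open set $V\supset K$, equals $X$ outside $\mathcal{T}$, and is $\epsilon$-$C^1$-close to $X$. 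Set $Y:=Z$. By construction $Y$ coincides with $X$ off $\mathcal{T}$ (giving (4)); on $V$ the orbit $\{x=0\}$ is preserved, so $Y^t(p)=X^t(p)$ for all $t\in\mathbb{R}$ (giving (2)); and the variational equation along this orbit in the tube coordinates is exactly $\dot{A}_s=B(s)A_s$, so $P_Y^\tau(p)=A_\tau=P_X^\tau(p)\circ A_\tau$ in the chosen trivialization (giving (3)).

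The main obstacle is the simultaneous requirement of divergence-freeness and the prescribed normal derivative along the orbit: a naive bump-function cut-off breaks the volume-preservation. This is exactly where the quantitative Pasting Lemma of \cite{AM} is indispensable, since it produces a divergence-free interpolation between $\widetilde{Y}^{\infty}$ near the orbit and $X$ outside $\mathcal{T}$ at the cost of a controlled $C^1$-error. A secondary subtlety is the choice of tubular coordinates compatible with the volume form, which requires $X$ to be at least $C^2$; the hypothesis $X\in\mathfrak{X}_\mu^4(M)$ gives ample regularity, and the density provided by Theorem \ref{zuppa} removes this as a genuine restriction in later applications.
\end{rproof}
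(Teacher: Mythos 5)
This theorem is not proved in the paper: it is quoted as an auxiliary result from Bessa--Rocha \cite{BR}, with the remark ``see \cite{BR} for more details.'' So there is no internal proof to compare your proposal against; it can only be assessed on its own terms.

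Your outline is the standard way Franks-type lemmas for flows are established, and the central idea is right: pass to a volume-normalized flowbox where $X=e_1$ and $P_X^t(p)=\mathrm{id}$, realize the prescribed one-parameter family by the non-autonomous linear field $(1,B(s)x)$ with $B=A'A^{-1}$, and glue back with the conservative Pasting Lemma of \cite{AM}. A few points deserve tightening, however. First, divergence-freeness of $(1,B(s)x)$ forces $\mathrm{tr}\,B(s)\equiv 0$, i.e.\ $A_s\in SL(n-1,\mathbb{R})$ for all $s$; this is not stated in the theorem's hypotheses, and although it is forced at $s=\tau$ by the volume-preservation of $P_Y^\tau$, you must either impose it for all $s$ or replace $A_s$ by $A_s/(\det A_s)^{1/(n-1)}$, which has the same endpoint and comparable generator norm. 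You note the constraint, which is correct, but it should be flagged as a genuine (implicit) hypothesis rather than an automatic consequence. Second, the smoothing step as described is shaky: mollification of a divergence-free field need not remain divergence-free, and Zuppa's theorem gives no control of the $1$-jet along $\{x=0\}$. The cleaner route is to first replace $\{A_t\}$ by a smooth family with the same endpoints $A_0=\mathrm{id}$, $A_\tau$ and a generator of comparable norm, so that $(1,B(s)x)$ is already $C^\infty$ on the tube and no post hoc smoothing is needed. Third, the Pasting Lemma as stated returns an unspecified open set $W\supset U$ with $Z=X$ only outside $W$; to obtain property (4) literally you need $W\subset\mathcal{T}$, which requires invoking the quantitative form of \cite{AM} in which $W$ can be taken inside any a priori chosen neighborhood of $K$. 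Finally, for $\xi_0$ to depend only on $\epsilon$ and $X$ you must record that the volume-preserving flowbox charts over arcs of length in $[1,2]$ have uniformly bounded distortion -- this is where the $C^4$ hypothesis (hence $C^3$-charts via Moser--Dacorogna) is actually used -- otherwise the passage from the flowbox estimate $\sup_s\|B(s)\|<\xi_0$ to the $\epsilon$-$C^1$ bound on $M$ is not uniform over $p$.
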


In the sequence, we state a version of the $C^1$-Closing Lemma for volume-preserving flows, firstly proved by Pugh and Robinson (see \cite{PR}) and that, more recently, was improved by Arnaud, that presented a simpler proof (see \cite{Ar}). It states that the orbit of a recurrent point can be approximated by a long time closed orbit of a $C^1$-perturbation of the original vector field.

\begin{theorem}\label{closing}
Take $X\in \mathfrak{X}^1_{\mu}(M)$ and $x$ a $X^t$-recurrent point. Given $\epsilon, r, T>0$, there is an $\epsilon$-$C^1$-neighborhood $\mathcal{U}\subset\mathfrak{X}^1_{\mu}(M)$ of $X$, a closed orbit $p$ of $Y\in\mathcal{U}$ with period $\pi$ arbitrarily large, a map $g:[0,T]\rightarrow[0,\pi]$ close to the identity and $ \tilde{T}>T$ such that
\begin{itemize}
	\item $dist\big(X^t(x),Y^{g(t)}(p)\big)<\epsilon$, for every $0\leq t\leq \tilde{T}$;
	\item $Y=X$ on $M\backslash B_{r}\big(X^{[0,\tilde{T}]}(x)\big)$.
\end{itemize}
\end{theorem}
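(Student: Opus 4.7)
The plan is to combine three ingredients: the recurrence of $x$, a local divergence-free perturbation lemma, and a geometric argument that upgrades a near-return into a genuine closed orbit of a $C^1$-nearby volume-preserving vector field.

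First I would exploit the recurrence of $x$ under $X^t$: there exist arbitrarily large times $\tilde{T}>T$ such that $X^{\tilde{T}}(x)$ lies in a prescribed small neighborhood of $x$, of radius much smaller than any of $\epsilon$, $r$, or the size of a fixed flowbox at $x$. I would then pick a codimension-one transverse disk $\Sigma\subset B_r(x)$, inside a volume-preserving flowbox chart, and consider the Poincar\'e first-return map of the long orbit segment $X^{[0,\tilde{T}]}(x)$ to $\Sigma$. The tube $B_r\bigl(X^{[0,\tilde{T}]}(x)\bigr)$ will be the support of the future perturbation, and the divergence-free condition forces any correction to be constructed inside it as the symplectic gradient (or curl-type field) of a compactly supported potential, so that volume is preserved automatically.

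Second, I would use a local perturbation lemma: given two points $y_1,y_2\in\Sigma$ sufficiently close to each other, there is a divergence-free vector field $Z$, arbitrarily $C^1$-small, supported in a small ball inside the tube, whose time-one map along the perturbed flow sends $y_1$ to $y_2$. One obtains $Z$ explicitly in divergence-free Darboux-type coordinates as the Hamiltonian field of a bump potential, which gives explicit $C^1$-size estimates in terms of the required displacement and the radius of the support. Applying this with $y_1 = X^{\tilde T}(x)$ and $y_2 = x$ in principle closes the orbit, and by a standard flowbox modification one arranges that $Y$ equals $X$ off $B_r\bigl(X^{[0,\tilde{T}]}(x)\bigr)$ while the perturbed orbit through $p:=x$ returns to $p$ after a time $\pi$ close to $\tilde{T}$.

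The main obstacle is quantitative: the displacement needed to identify $X^{\tilde{T}}(x)$ with $x$ may a priori be too large for a single bump to be made $C^1$-smaller than $\epsilon$. This is precisely Pugh's obstruction in the classical closing lemma, and its resolution is where the depth of the argument lies. I would follow the Pugh--Robinson lift strategy, distributing the correction over many visits of the orbit segment to $\Sigma$ so that each individual correction is minute and the cumulative effect realises the full displacement; in the $C^1$-volume-preserving setting one must further check that the lifted perturbations remain divergence-free, which is where Arnaud's simplification helps by producing a single explicit Hamiltonian perturbation on an adapted transverse section with the right $C^1$-bound. Finally, with the perturbed $Y$ in hand, I would verify the two bulleted conclusions: the tracking map $g:[0,T]\to[0,\pi]$ is constructed as the time-reparametrization induced by the nearly-identical flowbox coordinates, giving $\mathrm{dist}\bigl(X^t(x),Y^{g(t)}(p)\bigr)<\epsilon$ for $0\le t\le \tilde{T}$, while the support condition $Y=X$ off $B_r\bigl(X^{[0,\tilde{T}]}(x)\bigr)$ is built into the perturbation from the start.
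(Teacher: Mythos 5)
The paper does not actually prove Theorem~\ref{closing}: it is stated as an auxiliary result and attributed to Pugh and Robinson (\cite{PR}), with the simpler modern proof credited to Arnaud (\cite{Ar}). So there is no "paper's own proof" to compare against, only a citation. Your sketch is a high-level outline of exactly the Pugh--Robinson lift strategy, as refined by Arnaud, so you have correctly identified where the cited proof lives and what its core ideas are.

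That said, a few points in your sketch are imprecise or overreaching. First, phrasing the local perturbation as "the symplectic gradient (or curl-type field) of a compactly supported potential" conflates the Hamiltonian/symplectic setting with the general $n$-dimensional divergence-free one. On a Riemannian $n$-manifold with a volume form there is no symplectic structure, and the localized perturbations must be genuinely divergence-free fields constructed, e.g., via conservative flowbox charts and rotation-type deformations; Arnaud's treatment in the volume-preserving case is precisely careful about this. Second, asserting that the closed orbit of $Y$ passes through $p := x$ is stronger than the statement you are proving: the conclusion only asserts the existence of a nearby closed orbit $p$ shadowing $X^{[0,\tilde T]}(x)$, not that $x$ itself becomes periodic, and whether one can take $p=x$ depends on which variant of the closing lemma one establishes. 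Third, the entire quantitative heart of the argument — how to distribute the correction over many returns to $\Sigma$ so each bump is $C^1$-small while remaining divergence-free, and how to control the cumulative effect without losing the support condition $Y=X$ off $B_r\bigl(X^{[0,\tilde T]}(x)\bigr)$ — is acknowledged but not actually carried out, and this is where essentially all of the difficulty of the closing lemma resides. As a summary of the known proof, your sketch is faithful in spirit; as a self-contained proof it has the quantitative core missing, which is precisely why the paper cites \cite{PR,Ar} instead of reproving it.
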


A conservative version of Pugh and Robinson's \textit{General Density Theorem} (see \cite{PR}), also proved by Arnaud in \cite{Ar}, asserts that, $C^1$-generically, the closed orbits are dense in $M$. 
We denote by $\mathcal{PR}^1_{\mu}(M)$ this residual set in $\mathfrak{X}^1_{\mu}(M)$.

The next result correspond to a dichotomy for conservative vector fields. It requires the existence of a closed orbit with arbitrarily large period and it is obtained following the ideas presented on \cite[Proposition 2.4]{BR}.

\begin{theorem}\label{BGV}
Let $X\in\mathfrak{X}^1_{\mu}(M)$ and let $\mathcal{U}$ be a small $C^1$-neighborhood of $X$. Then, for any $\epsilon>0$, there exist $l,\tau>0$ such that, for any $Y\in \mathcal{U}$ and any closed orbit $x$ of $Y^t$ of period $\pi(x)>\tau$,
\begin{itemize}
	\item either ${P}_Y^t$ admits an $l$-dominated splitting over the $Y^t$-orbit of $x$, or else
  \item for any neighborhood $U$ of $x$, there exists an $\epsilon$-$C^1$-perturbation $\tilde{Y}$ of $Y$, coinciding with $Y$ outside $U$ and along the orbit of $x$, such that ${P}^{\pi{(x)}}_{\tilde{Y}}(x)=id$.
\end{itemize}
\end{theorem}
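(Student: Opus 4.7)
The plan is to combine a linear-algebraic dichotomy for the volume-preserving linear Poincaré cocycle $\{P_Y^t\}$ along the closed orbit $\mathcal{O}_Y(x)$ with Franks' Lemma (Theorem~\ref{Franks}) in order to realize abstract cocycle perturbations as genuine $C^1$-small divergence-free perturbations of $Y$. First, I would shrink $\mathcal{U}$ so that $\sup_{Y\in\mathcal{U}}\|DY\|$ is uniformly bounded, and apply Franks' Lemma to obtain $\xi_0=\xi_0(\epsilon,X)>0$: any one-parameter $\xi_0$-small perturbation $\{A_t\}_{t\in[0,\tau]}$ of the cocycle with $\tau\in[1,2]$ can be realized by an $\epsilon$-$C^1$-close divergence-free vector field that preserves the orbit pointwise and is supported in an arbitrarily thin flowbox. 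Since $Y$ is divergence-free, $P_Y^t$ is a cocycle on the $(n-1)$-dimensional normal bundle $N_{\mathcal{O}_Y(x)}$ that preserves the induced volume form, so we are working in the volume-preserving linear category.

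Second, I would invoke the Bonatti--Gourmelon--Vivier dichotomy for volume-preserving linear cocycles: there exist constants $l=l(\xi_0,n)$ and $N=N(\xi_0,n,\sup_{Y\in\mathcal{U}}\|DY\|)$, independent of the particular closed orbit, such that along any closed orbit of period $\pi(x)>N$, \emph{either} the cocycle $\{P_Y^t\}$ admits an $l$-dominated splitting, \emph{or} it can be perturbed by $\xi_0$-small, volume-preserving modifications concentrated at finitely many orbit points spaced by at least unit time into a cocycle whose full return map equals the identity. The heart of this step is that, without $l$-domination, one can find times at which the candidate contraction/expansion rates are close enough that a $\xi_0$-small rotation in an appropriate two-plane produces a prescribed change in eigendirections; iterating, one drives the return map to the identity, which lies in $SL$ and is thus a legitimate target. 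Set $\tau:=N$.

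Third, I would realize the discrete cocycle perturbations as genuine vector-field perturbations. The selected perturbation times are spaced apart in time, so one can choose pairwise disjoint flowboxes $\mathcal{T}_1,\dots,\mathcal{T}_m\subset U$, each of orbit length in $[1,2]$, around them. Applying Theorem~\ref{Franks} independently on each $\mathcal{T}_i$ yields a final vector field $\tilde{Y}$ that is $\epsilon$-$C^1$-close to $Y$ (pairwise disjointness ensures the $C^1$-size is the maximum, not the sum, of the individual perturbations), coincides with $Y$ outside $U$ and along $\mathcal{O}_Y(x)$ (by property (2) of Franks), and whose linear Poincaré return map is the composition prescribed by the cocycle perturbation, namely the identity, giving $P^{\pi(x)}_{\tilde Y}(x)=id$.

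The main obstacle is the volume-preserving version of the abstract Bonatti--Gourmelon--Vivier dichotomy itself: volume preservation imposes a codimension-one constraint at each perturbation step, so one must verify that the local rotations used to destroy candidate dominated splittings can be taken inside the special linear (and, on symplectic blocks, symplectic) group, while still having enough freedom to drive their composition to any target in $SL$. This is precisely what is established in [BR, Proposition 2.4] following the strategy of Bonatti--Gourmelon--Vivier; the flow-level statement above then follows by the tubular-neighborhood translation just outlined.
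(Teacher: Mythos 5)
Your proposal is correct and follows essentially the same route the paper intends: the paper does not give a self-contained proof but defers to \cite[Proposition 2.4]{BR}, which is precisely the conservative-flow adaptation (via Franks' Lemma, Theorem~\ref{Franks}, and a disjoint-flowbox realization argument) of the Bonatti--Gourmelon--Vivier cocycle dichotomy that you outline. Your identification of the genuine obstacle --- verifying that the perturbing rotations can be taken inside the volume-preserving linear group while retaining enough freedom to drive the return map to the identity --- is exactly the point that \cite{BR} addresses.
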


Take $X\in \mathfrak{X}^1_{\mu}(M)$. By Oseledets's theorem (see \cite{O}), $\mu$-almost every point $x$ in $M$ has a splitting of the tangent bundle, $T_xM=E_x^1\oplus\cdot\cdot\cdot\oplus E_x^{k(x)}$, called the \textit{Oseledets splitting}, and real numbers $\lambda_1(x)>\cdot\cdot\cdot> \lambda_{k(x)}(x)$, called the \textit{Lyapunov exponents}, $1\leq k(x)\leq n$, such that $DX^t_x(E^i_x)=E^i_{X^t(x)}$ and $$\lambda_i(x)=\displaystyle \lim_{t\rightarrow \pm\infty}\frac{1}{t}\log \|DX^t_x\:(v^i)\|,$$ for any $v^i\in E^i_x\setminus\{\vec{0}\}$ and  $i\in\{1,...,k(x)\}$.
The full $\mu$-measure set of the \textit{Oseledets points} is denoted by $\mathcal{O}(X)$.

\begin{remark}\label{remsum0}
As a consequence of Oseledets's theorem one has that $$\displaystyle\sum_{i=1}^{k(x)}\lambda_i(x)\cdot\dim(E^i_x)=\lim_{t\rightarrow\pm\infty}\dfrac{1}{t}\log|\det DX^t_x|.$$

However, since the vector field $X$ is divergence-free, we deduce that $|\det DX^t(x)|=1$, for every $t\in\mathbb{R}$ and every $x\in M$.
So, we conclude that $$\displaystyle\sum_{i=1}^{k(x)}\lambda_i(x)\cdot\dim(E^i_x)=0, \;\;\forall \: x\in \mathcal{O}(X).$$

Note that if we do not take into account the multiplicities of the eigenvalues associated to the eigenspaces $E_x^1, \cdot\cdot\cdot,E_x^{k(x)}$, we have exactly $n$ Lyapunov exponents $\lambda_1(x)\geq\cdot\cdot\cdot\geq \lambda_{n}(x)$.
\end{remark}

If we assume the absence of a dominated splitting, it is possible to make a $C^1$-perturbation of the vector field in order to get a new one with Lyapunov exponents arbitrarily close to zero, as it is shown in \cite[Theorem 1]{BR0}. 

Now, we state that a singularity $p$ is \textit{linear} if there exist smooth local coordinates around $p$ such that $X$ is linear and equal to $DX(p)$ in these coordinates (cf. \cite[Definition 4.1]{V1}).
The next lemma states that any singularity can be turned into a linear one, by performing a small perturbation of the vector field.

\begin{lemma}\label{linearsing}
If $X\in\mathfrak{X}^1_{\mu}(M)$ has a singularity then, for any neighbourhood $\mathcal{V}$ of $X$, there is an open and nonempty set $\mathcal{U}\subset\mathcal{V}$ such that any $Y\in\mathcal{U}$ has a linear hyperbolic singularity.
\end{lemma}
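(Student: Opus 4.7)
The plan is to first produce a single divergence-free vector field $Z\in\mathcal{V}$ carrying a linear hyperbolic singularity, and then take $\mathcal{U}$ to be a sufficiently small $C^1$-open neighborhood of $Z$ inside $\mathcal{V}$. Let $p$ be the singularity of $X$. I fix, via Moser's theorem, a smooth local chart around $p$ in which the volume form is standard; divergence-free then translates to tracelessness of the Jacobian. Inside the $(n^2-1)$-dimensional space of traceless $n\times n$ matrices, the subset of those with simple, hyperbolic, non-resonant spectrum is open and dense, so I can pick such a matrix $A$ arbitrarily close to $DX(p)$. The associated linear vector field $L(x):=A(x-p)$ is divergence-free and, on a sufficiently small ball $B_r(p)$, $C^1$-close to $X$ (since $X(x) = DX(p)(x-p) + o(|x-p|)$ and $A$ is close to $DX(p)$). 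Applying the Pasting Lemma (Theorem~\ref{pasting}) with compact piece $\overline{B_{r/2}(p)}$, I obtain $Z\in\mathcal{V}$ that equals $L$ on $B_{r/2}(p)$ and equals $X$ outside a slightly larger ball. By construction $Z$ has a linear hyperbolic singularity at $p$.

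I then take $\mathcal{U}\subset\mathcal{V}$ to be a small $C^1$-neighborhood of $Z$. For any $Y\in\mathcal{U}$, the implicit function theorem applied to $Y(x)=0$ (using invertibility of $DZ(p)=A$) yields a unique singularity $p_Y$ close to $p$, with $DY(p_Y)$ close to $A$. The three conditions on the spectrum — hyperbolicity, simplicity and non-resonance — are $C^1$-open, so all persist throughout $\mathcal{U}$. Sternberg's linearization theorem then provides smooth local coordinates at $p_Y$ in which $Y$ coincides with its linear part $DY(p_Y)$, making $p_Y$ a linear hyperbolic singularity of $Y$. When the regularity needed to apply Sternberg exceeds that of $Y\in\mathfrak{X}^1_\mu(M)$, I first $C^1$-approximate $Y$ by a $C^\infty$ divergence-free field via Zuppa's theorem (Theorem~\ref{zuppa}) and re-paste an exact linear model around $p_Y$ via Theorem~\ref{pasting}, arranging the perturbation so that the final field remains inside $\mathcal{U}$.

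The main obstacle is conceptual rather than computational: linearity in smooth coordinates is \emph{not} a $C^1$-open condition on vector fields, so pasting a single linear model cannot, by itself, populate a whole open neighborhood with linear-singularity vector fields. The key trick is to transfer the required openness from the vector-field side to the spectral side — hyperbolicity together with simple non-resonant eigenvalues is an open condition on the linearization that survives under $C^1$-small perturbations — and then to invoke Sternberg's theorem to produce, separately for each $Y\in\mathcal{U}$, its own smooth chart in which $Y$ is linear at $p_Y$.
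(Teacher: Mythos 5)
Your overall strategy parallels the paper's: build a field with a linear hyperbolic singularity, then invoke the openness of spectral (non-resonance) conditions together with Sternberg's theorem to cover a full $C^1$-open neighborhood. The paper reaches its base point differently --- by first making the singularity hyperbolic via a $C^1$-conservative perturbation, then smoothing via Zuppa's theorem and applying Sternberg to that smooth field (possibly after perturbing the eigenvalues into non-resonance) --- whereas you build the base point $Z$ directly by choosing a suitable traceless matrix $A$ and pasting the literal linear field into a Moser chart. Your construction is cleaner and self-contained, and the use of Moser's theorem to make ``divergence-free $\Leftrightarrow$ traceless'' explicit is a nice touch the paper leaves implicit.

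The gap is in the regularity workaround. You correctly diagnose the obstacle: a generic $Y\in\mathcal{U}$ is only $C^1$, and Sternberg's theorem requires finite but higher ($C^k$, $k\geq 2$) regularity. But your fallback --- $C^1$-approximate $Y$ by a smooth field and re-paste a linear model to obtain some $Y'$ still in $\mathcal{U}$ --- proves only that $Y$ is $C^1$-close to a field with a linear singularity, not that $Y$ itself has one. The lemma's conclusion is universally quantified over $\mathcal{U}$, so replacing $Y$ by a nearby $Y'$ does not close the argument; it establishes a density statement, not an openness statement. In fact the concern you flag is exactly why ``linear singularity'' is not $C^1$-open as literally defined, and the re-paste step does not transfer openness from the spectral side to the vector-field side --- it simply produces a different vector field. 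The paper, it should be said, asserts the same final step (that every $X_3$ in a $C^1$-neighborhood of the smooth base point is conjugated to its linear part) without detailing how Sternberg applies to merely-$C^1$ fields, so your unease is legitimate; but the re-paste workaround is not a valid fix.

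A secondary concern, affecting your matrix-selection step and the paper's analogous claim alike: a traceless matrix always satisfies the resonance $\lambda_j=\lambda_j+\sum_i\lambda_i$ (coefficient vector $m=e_j+(1,\dots,1)$, degree $n+1$), because $\sum_i\lambda_i=0$. So the set of traceless matrices with fully Sternberg-non-resonant spectrum is empty, not open and dense. One must therefore invoke a finite-order linearization theorem, where only resonances up to an order depending on the spectral gap obstruct conjugacy; your ``open and dense'' claim should be stated at that finite order, and the smooth chart obtained is then only finitely differentiable, which should be reconciled with the paper's definition of ``linear'' requiring smooth coordinates.
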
 

\begin{proof}
Let $p$ be a singularity of $X\in\mathfrak{X}^1_{\mu}(M)$ and $\epsilon>0$.
By a small $C^1$-conservative perturbati\-on of $X$ (see \cite{BR}), we can find $X_1$, $\epsilon$-$C^1$-close to $X$, with a hyperbolic singularity $p$. 
Denote by $\mathcal{V}$ a $C^1$-neighbourhood of $X_1$ in $\mathfrak{X}^1_{\mu}(M)$ where the analytic continuation of $p$ is well-defined.
Now, by Zuppa's Theorem (see \cite{Z}), there is a smooth vector field $X_2\in\mathcal{V}$ with a hyperbolic singularity $p_2$. 
If the eigenvalues of $DX_2(p_2)$ satisfy the nonresonance conditions of the Sternberg linearization theorem (see \cite{SS}) then there is a smooth diffeomorphism conjugating $X_2$ and its linear part around $p_2$. 
If the nonresonance conditions are not satisfied then we can perform a $C^1$-conservative perturbation of $X_2$, so that the eigenvalues satisfy the nonresonance conditions.
So, since the set of divergence-free vector fields satisfying the nonresonance conditions is an open and dense set in $\mathfrak{X}^1_{\mu}(M)$, there is a $C^1$-neighbourhood ${\mathcal{U}}$ of $X_2$ in $\mathcal{V}$ such that any vector field $X_3\in{\mathcal{U}}$ is conjugated to its linear part, meaning that $X_3$ has a linear hyperbolic singularity.
\end{proof}

The next result, will be used to prove that a star vector field can not exhibit singularities.

\begin{theorem}\cite[Proposition 4.1]{V1}\label{vivier}
If $X\in\mathfrak{X}^1(M)$ admits a linear hyperbolic singularity of saddle-type then the ${P}_X^t$ does not admit any dominated splitting over $M\backslash Sing(X)$.
\end{theorem}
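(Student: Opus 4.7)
The plan is to derive a contradiction from the assumption that $P_X^t$ admits a dominated splitting $N = N^1 \oplus N^2$ over $M \setminus Sing(X)$, by comparing how this splitting is forced to restrict to orbits lying in the local stable and unstable manifolds of the saddle $\sigma$. Working in the linearizing chart near $\sigma$ provided by the hypothesis, I would write $X(y) = Ay$ with $A = DX(\sigma)$ hyperbolic of saddle type, set $\mathbb{R}^n = E^s \oplus E^u$ with $s = \dim E^s \geq 1$ and $u = \dim E^u \geq 1$, and choose the inner product on the chart so that $E^s \perp E^u$.

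First I would analyze the orbit $t \mapsto e^{tA}x$ for some $x \in E^s \setminus \{0\}$. Since $X(y) = Ay \in E^s$ at every point $y$ of this orbit, orthogonality yields $E^u \subseteq N_y$ and the natural decomposition $N_y = (N_y \cap E^s) \oplus E^u$ with dimensions $s-1$ and $u$. A direct calculation shows that this decomposition is $P_X^t$-invariant (both $e^{tA}$ and $\Pi_{X^t(y)}$ preserve the splitting $E^s \oplus E^u$) and that the Lyapunov exponents of $P_X^t$ are all strictly negative on $N_y \cap E^s$ and all strictly positive on $E^u$. Since the domination condition forces every Lyapunov exponent of $N^1$ to be strictly below every Lyapunov exponent of $N^2$ along this orbit, we must have $N^1 \subseteq N_y \cap E^s$ and $N^2 \supseteq E^u$; the dimension identity $\dim N^1 + \dim N^2 = n-1 = (s-1) + u$ upgrades both containments to equalities, so $\dim N^1 = s-1$ along any orbit in $W^s(\sigma) \setminus \{\sigma\}$. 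Running the symmetric argument on an orbit through some $x \in E^u \setminus \{0\}$ forces $\dim N^1 = s$ along orbits in $W^u(\sigma) \setminus \{\sigma\}$. Since $Sing(X)$ is finite and $n \geq 2$, the set $M \setminus Sing(X)$ is connected, so the dimension of the continuous subbundle $N^1$ must be globally constant; comparing the two values gives $s-1 = s$, the desired contradiction.

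The main obstacle is justifying the rate-based containment used in the middle paragraph when $A|_{E^s}$ or $A|_{E^u}$ carries several distinct eigenvalue real parts. In that situation the admissible dominated thresholds on a single orbit are in bijection with the gaps between consecutive distinct Lyapunov exponents, so cuts placed strictly inside $E^s$ or inside $E^u$ produce alternative legal values for $\dim N^1$ that could conceivably agree across the two orbit types and prevent the dimension contradiction above. I would handle this by choosing $x$ in the slowest-contracting eigenspace of $A|_{E^s}$ (respectively in the slowest-expanding eigenspace of $A|_{E^u}$), so that the cut separating the negative from the positive spectrum is always legal and yields the $s-1$ versus $s$ incompatibility; the alternative intermediate cuts would in turn be ruled out by iterating the same continuity/invariance argument on the refined $A$-invariant sub-bundles, reducing the analysis to the simplest case and finishing the proof.
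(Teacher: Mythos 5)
Your strategy has a genuine gap that I don't think the last paragraph repairs. The pivotal step is the claim that along an orbit through $x\in E^s$ the domination forces $N^1\subseteq N_y\cap E^s$ and $N^2\supseteq E^u$. But domination on a single orbit only tells you that $\max\{\text{Lyapunov exponents of }N^1\}<\min\{\text{Lyapunov exponents of }N^2\}$; it does \emph{not} tell you that the cut sits exactly at the sign change. If the Lyapunov exponents on $N_{y}$ are, say, $\{-3,-2,1,2\}$, a global dominated splitting could perfectly well restrict to $N^1=(\text{the }-3\text{ line})$ and $N^2=(\text{the }-2,1,2\text{ sum})$ on this orbit. Consequently $\dim N^1$ is \emph{not} pinned to $s-1$ on $W^s(\sigma)\setminus\{\sigma\}$, nor to $s$ on $W^u(\sigma)\setminus\{\sigma\}$; on both orbit families every value in $\{1,\dots,n-2\}$ is a priori admissible (and when $u=1$ the ``symmetric argument'' would even force $N^2=\{0\}$, which is not a splitting). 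So the constancy-of-dimension step has nothing to bite on: the allowed index sets overlap, and you never exhibit two orbits whose forced dimensions are incompatible. The final paragraph, where you propose to ``iterate the same continuity/invariance argument on the refined $A$-invariant sub-bundles'', gestures at a fix but does not specify what the refined bundles are, what the iterated invariance statement is, or why it excludes the intermediate cuts --- it is at present a placeholder, not an argument.

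The missing idea is that the obstruction is not visible by looking at orbits lying entirely inside $W^s(\sigma)$ or $W^u(\sigma)$ separately and comparing dimensions; one must track how the splitting is transported along regular orbits that pass arbitrarily close to $\sigma$, entering near the local stable manifold and exiting near the local unstable manifold. Along such a passage the normal bundle rotates (because the flow direction rotates from $E^s$ to $E^u$), while the linear Poincar\'e flow contracts the $E^s$-directions and expands the $E^u$-directions by an amount that becomes unbounded as the passage time grows. Vivier's proof exploits this incompatibility --- equivalently, it can be phrased via the extended linear Poincar\'e flow on the blow-up of $\sigma$, where the projectivized linear flow $e^{tA}$ over the sphere of directions at $\sigma$ fails to carry a dominated splitting compatible with the ones coming from $E^s$-directions and $E^u$-directions. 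That continuity/uniformity argument across a passage near $\sigma$ is what produces the contradiction, and it is absent from your proposal. Note also that the paper does not reprove this statement; it cites it directly as \cite[Proposition 4.1]{V1}, so any proof here must stand on its own.
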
 

The final presented auxiliary result asserts that, $C^1$-generically, a vector field is topologically mixing, and so transitive.

\begin{theorem}\cite[Theorem 1.1]{B1}\label{topmix}
There exists a $C^1$-residual subset $\mathcal{R}\subset\mathfrak{X}^1_{\mu}(M)$ such that, if $X\in\mathcal{R}$ then $X$ is a topologically mixing vector field.
\end{theorem}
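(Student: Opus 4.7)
The plan is a Baire-category argument applied to a countable family of $C^1$-open-and-dense conditions, in the spirit of the standard generic-transitivity arguments adapted to the divergence-free setting. First I would fix a countable basis $\{V_i\}_{i \in \mathbb{N}}$ of the topology of $M$ and express ``$X$ is topologically mixing'' as
$$X \in \bigcap_{i,j,k \in \mathbb{N}} \; \bigcup_{N \in \mathbb{N}} \; \bigcap_{m \geq N} \mathcal{O}_{i,j,m,k},$$
where
$$\mathcal{O}_{i,j,m,k} \,=\, \bigl\{X \in \mathfrak{X}^1_{\mu}(M) : \exists \, t \in [m, m + 1/k] \text{ with } X^t(V_i) \cap V_j \neq \emptyset\bigr\}.$$
Each $\mathcal{O}_{i,j,m,k}$ is $C^1$-open by continuous dependence of the flow on the vector field. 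It then suffices to prove that, for every fixed quadruple $(i,j,k,N)$, the set $\bigcap_{m \geq N} \mathcal{O}_{i,j,m,k}$ is $C^1$-dense in $\mathfrak{X}^1_{\mu}(M)$, since the outer countable intersection will then be residual.

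For the density step I would combine the Pugh--Robinson--Arnaud general density theorem with the conservative connecting lemma. Starting from any $X \in \mathfrak{X}^1_{\mu}(M)$, Theorem~\ref{zuppa} and Theorem~\ref{pasting} reduce the problem to smooth divergence-free approximations. By the general density theorem one can further $C^1$-perturb to obtain hyperbolic closed orbits $\gamma_i \subset V_i$ and $\gamma_j \subset V_j$. An additional $C^1$-perturbation via the divergence-free connecting lemma produces a transverse heteroclinic intersection $W^u(\gamma_i) \pitchfork W^s(\gamma_j) \neq \emptyset$.

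Once such a transverse heteroclinic connection is in place, the $\lambda$-lemma (inclination lemma) for flows shows that the forward images under $X^t$ of a small disc placed inside $V_i$ accumulate, for every sufficiently large real time $t$, on $W^s(\gamma_j)$ and therefore meet $V_j$; moreover the threshold $T_0$ beyond which this holds depends only on the local hyperbolicity constants and the periods of $\gamma_i, \gamma_j$. Hence the perturbed vector field lies in $\bigcap_{m \geq N} \mathcal{O}_{i,j,m,k}$ for every $N \geq T_0$, yielding the required density.

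The main obstacle I anticipate is implementing each of these perturbations entirely inside $\mathfrak{X}^1_{\mu}(M)$: the connecting-lemma and closing-lemma steps must preserve the volume form, which is precisely where the pasting lemma (Theorem~\ref{pasting}), Zuppa's smoothing (Theorem~\ref{zuppa}) and the divergence-free Franks lemma (Theorem~\ref{Franks}) play a crucial role. A secondary subtlety is the transition from the discrete to the continuous-time regime: one must check that the $\lambda$-lemma furnishes intersections $X^t(V_i) \cap V_j \neq \emptyset$ for every sufficiently large real $t$, not merely for a discrete subsequence. This follows from the flow-box structure around the heteroclinic orbit together with continuity of $X^t$ in $t$, but it is the point at which the argument genuinely differs from its diffeomorphism analogue.
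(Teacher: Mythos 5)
Your Baire-category setup does not capture topological mixing, and the reduction fails in a way that cannot be repaired inside this framework. First, $\mathcal{O}_{i,j,m,k}$ uses the quantifier ``$\exists\, t\in[m,m+1/k]$''; since the intervals $[m,m+1/k]$ over integer $m$ are pairwise disjoint, the condition $\bigcap_{i,j,k}\bigcup_N\bigcap_{m\geq N}\mathcal{O}_{i,j,m,k}$ controls $X^t(V_i)\cap V_j$ for only one time in each such interval and says nothing about $t$ in the gaps $(m+1/k,\,m+1)$; this is strictly weaker than topological mixing. The repair is to take $\mathcal{O}_{i,j,m}:=\{X:\forall\,t\in[m,m+1],\ X^t(V_i)\cap V_j\neq\emptyset\}$, which is still $C^1$-open by compactness of $[m,m+1]$ and continuity of $(X,t)\mapsto X^t$. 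Second, and more fundamentally, the reduction ``it suffices to show $\bigcap_{m\geq N}\mathcal{O}_{i,j,m}$ is dense for each fixed $N$'' cannot succeed: that set consists of fields for which $V_i$ mixes into $V_j$ from time $N$ onward, and near a very slow divergence-free field (rescale any $X$ by a small constant) no $C^1$-small perturbation makes $X^t(V_i)$ reach a disjoint $V_j$ for all $t\geq N$ when $N$ is small. Your own argument already exposes this: the threshold $T_0$ delivered by the $\lambda$-lemma depends on the hyperbolicity of the orbits you create and on the length of the heteroclinic excursion, hence on the initial $X$ and on $\epsilon$; you place the perturbed field only in $\bigcap_{m\geq T_0}\mathcal{O}_{i,j,m}$ with $T_0$ uncontrolled, not in $\bigcap_{m\geq N}\mathcal{O}_{i,j,m}$ for the prescribed $N$. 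Since $\bigcup_N\bigcap_{m\geq N}\mathcal{O}_{i,j,m}$ is a countable union of $G_\delta$'s and not itself a $G_\delta$, Baire's theorem does not conclude from density; ``topological mixing'' does not present itself as a $G_\delta$ condition in $\mathfrak{X}^1_\mu(M)$, so there is no direct open-dense decomposition of the statement.

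The route that works -- and the one the cited work builds on, cf.\ the diffeomorphism analogues of Bonatti--Crovisier and Abdenur--Crovisier -- is to let $\mathcal{R}$ be a genuine dense $G_\delta$ carved out by \emph{structural} genericity hypotheses: density of closed orbits (conservative closing lemma), hyperbolicity of all critical elements with transverse invariant manifolds (Kupka--Smale), and, for every pair of closed orbits, transverse heteroclinic connections in both directions (conservative $C^1$-connecting lemma). Each of these is residual, their intersection is residual, and one then proves the \emph{deterministic} implication that every vector field satisfying all of them is topologically mixing, by exactly the $\lambda$-lemma argument you sketch. So the dynamical core of your proposal -- Zuppa smoothing, the conservative closing and connecting lemmas, and the inclination lemma upgrading a transverse heteroclinic chain to ``$X^t(V_i)\cap V_j\neq\emptyset$ for all large $t$'' -- is the right machinery; what must change is the packaging: deduce mixing \emph{from} a residual set of auxiliary conditions rather than trying to realize mixing \emph{as} a residual set. (A small slip to correct: under forward iteration a disc transverse to $W^s(\gamma_j)$ accumulates on $W^u(\gamma_j)$, not on $W^s(\gamma_j)$; since $\gamma_j\subset V_j$ the conclusion is unaffected.)
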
 

\end{section}

\begin{section}{Auxiliary lemmas}\label{auxlemmas}

We start this section by showing that a divergence-free star vector field does not have singularities.

\begin{lemma}\label{Gnosing}
If $X\in\mathcal{G}^1_{\mu}(M)$ then $Sing(X)=\emptyset$.
\end{lemma}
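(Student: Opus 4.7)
The plan is to argue by contradiction. Suppose there exists $X \in \mathcal{G}^1_{\mu}(M)$ with $Sing(X) \neq \emptyset$. Since $\mathcal{G}^1_{\mu}(M)$ is $C^1$-open, I would fix a $C^1$-neighborhood $\mathcal{V} \subset \mathcal{G}^1_{\mu}(M)$ of $X$ witnessing the star property, and then invoke Lemma~\ref{linearsing} to obtain a nonempty open set $\mathcal{U} \subset \mathcal{V}$ on which every vector field carries a linear hyperbolic singularity, automatically of saddle type, since we are in the divergence-free setting. By Theorem~\ref{vivier}, this forces $P_Z^t$ to admit no dominated splitting over $M \setminus Sing(Z)$ for every $Z \in \mathcal{U}$. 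The strategy is to contradict this statement by using the star property to manufacture a dominated splitting.

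The second ingredient comes from combining the star hypothesis with the dichotomy provided by Theorem~\ref{BGV}. Fix $Y \in \mathcal{U}$ and apply Theorem~\ref{BGV} with a small $\epsilon > 0$ to produce constants $l, \tau > 0$ such that for every $Z \in \mathcal{U}$ (possibly shrinking $\mathcal{U}$) and every closed orbit $\gamma$ of $Z$ of period greater than $\tau$, either $P_Z^t$ admits an $l$-dominated splitting over $\gamma$, or there exists an $\epsilon$-$C^1$-perturbation of $Z$ producing $P^{\pi(\gamma)} = \mathrm{id}$ along $\gamma$. The second alternative would create a non-hyperbolic closed orbit for a vector field in $\mathcal{G}^1_{\mu}(M)$, contradicting the star property; so the first alternative must hold uniformly for every such orbit.

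Next I would pick $Z \in \mathcal{U}$ lying in the residual set $\mathcal{PR}^1_\mu(M)$ of vector fields with dense closed orbits, and also in the residual set from Theorem~\ref{topmix} of topologically mixing vector fields. A mild strengthening of Theorem~\ref{closing} ensures that closed orbits of $Z$ of period greater than $\tau$ are actually dense in $M$. For each such orbit $\gamma$, the previous step provides an $l$-dominated splitting $N_\gamma = N^1_\gamma \oplus N^2_\gamma$ with $\dim N^1_\gamma \in \{1, \ldots, n-2\}$. By pigeonholing over this finite set of indices and invoking topological transitivity to propagate a single index across $M$, one obtains a family $\mathcal{F}$ of closed orbits with a common splitting dimension $k$ whose union is dense in $M \setminus Sing(Z)$. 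The classical continuity/compactness closure property of dominated splittings with uniform constants then extends the splitting from $\bigcup_{\gamma \in \mathcal{F}} \gamma$ to the whole regular set $M \setminus Sing(Z)$, contradicting Theorem~\ref{vivier} applied to $Z \in \mathcal{U}$.

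The hard part is the last step: going from the orbit-wise splittings, whose contracting dimensions may a priori vary, to a single $P_Z^t$-invariant dominated splitting over the whole regular set. The uniformity of the constant $l$ provided by Theorem~\ref{BGV} makes the limit/closure argument standard once a common index is fixed, so the real obstacle is producing a single index $k$ realized by a dense family of closed orbits in $M \setminus Sing(Z)$. This is where the topological mixing supplied by Theorem~\ref{topmix} is crucial: it prevents $M$ from splintering into disjoint invariant regions carrying dominated splittings of different dimensions, and together with the conservative general density theorem it forces the index of the dominated splittings on large-period closed orbits to stabilize on a dense set.
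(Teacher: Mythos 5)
Your argument follows essentially the same route as the paper: assume a singularity exists, use Lemma~\ref{linearsing} to pass to a nearby vector field with a linear hyperbolic saddle singularity, rule out the second alternative in Theorem~\ref{BGV} by the star property, close a dense orbit (supplied by Theorem~\ref{topmix} and $\mathcal{PR}^1_\mu$) to produce a dense family of long closed orbits carrying $\ell$-dominated splittings, take a limit to get a dominated splitting over the regular set, and contradict Theorem~\ref{vivier}. The only point where you deviate slightly is the index-stabilization step: where you invoke topological mixing to keep the splitting dimension coherent across $M$, the paper simply pigeonholes --- since the dimension of $N^1$ along each orbit $\Gamma_n$ lies in $\{1,\dots,n-2\}$, one passes to a subsequence on which it is constant, and since the $\Gamma_n$ shadow ever longer segments of a single dense orbit, $\limsup_n \Gamma_n = M$ still holds after the subsequence, so the extension argument goes through without any further appeal to transitivity; the two formulations lead to the same conclusion.
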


\begin{proof}
Fix $X\in\mathcal{G}^1_{\mu}(M)$ and $\mathcal{U}$ a $C^1$-neighborhood of $X$ in $\mathcal{G}^1_{\mu}(M)$, small enough such that Theorem \ref{BGV} holds.
Recall that $\mathcal{PR}^1_{\mu}(M)$ is a residual set in $\mathfrak{X}^1_{\mu}(M)$ such that any $X\in\mathcal{PR}^1_{\mu}(M)$ has the closed orbits dense in $M$ (see \cite[\S 8(c)]{PR}). 
Let $\mathcal{R}$ be the residual set given by Theorem \ref{topmix} 

To obtain a contradiction, take $p\in Sing(X)$, which is hyperbolic and of saddle-type, by definition of $\mathcal{G}^1_{\mu}(M)$, and so it persists to $C^1$-small perturbations of $X$.
By Lemma\:\ref{linearsing}, there is $Y\in\mathcal{U}\cap\mathcal{R}\cap\mathcal{PR}^1_{\mu}(M)$, $C^1$-close to $\tilde{X}$, such that $p\in Sing(Y)$ is linear hyperbolic of saddle-type, and $Y$ has a closed orbit $x$, with arbitrarily large period. 
Therefore, as $Y\in\mathcal{G}^1_{\mu}(M)$, by Theorem~\ref{BGV}, there exist constants $\ell,\tau>0$ such that ${P}_Y^t$ admits an $\ell$-dominated splitting over the $Y^t$-orbit of $x$ with period $\pi(x)>\tau$.
Also, $Y$ has a dense orbit because it belongs to $\mathcal{R}$. So, by the volume preserving Closing Lemma (Theorem~\ref{closing}), there is a sequence of vector fields $Y_n\in\mathcal{U}\cap\mathcal{R}$, $C^1$-converging to $Y$, and, for every $n\in\mathbb{N}$, $Y_n$ has a closed orbit $\Gamma_n=\Gamma_n(t)$ of period $\pi_n$ such that $\displaystyle\lim_{n\rightarrow\infty}\Gamma_n(0)=x$ and $\displaystyle\lim_{n\rightarrow\infty}\pi_n=+\infty$. 
Therefore, by Theorem~\ref{BGV}, ${P}_{Y_n}^t$ admits an $\ell$-dominated splitting over the orbit $\Gamma_n$, for large $n$. Taking a subsequence if necessary, say $n\in I\subseteq\mathbb{N}$, we have a sequence of $Y_n$ with closed orbit $\Gamma_n$ such that ${P}_{Y_n}^t$ has an $\ell$-dominated splitting and such that the dimensions of the invariant bundles do not depend on $n$. 
Then, given that $$M=\displaystyle\limsup_{n}\Gamma_n=\displaystyle\bigcap_{N\in\mathbb{N}}\biggl(\overline{\bigcup_{n\geq N}^{\infty}\Gamma_n}\biggr),$$ we conclude that there exists an $\ell$-dominated splitting for ${P}_Y^t$ over $M\backslash Sing(Y)$.

However, since $p$ is a linear hyperbolic singularity of saddle-type of $Y$, by Theorem \ref{vivier}, we conclude that ${P}_Y^t$ does not admit a dominated splitting over $M\backslash Sing(Y)$. This is a contradiction. So, $X$ has no singularities. 
\end{proof}


The next lemma states that, given a divergence-free star vector field, we can define a continuous splitting $N=N^1\oplus N^2$ over $M$.

\begin{lemma}\label{splitlemma}
If $X\in\mathcal{G}^1_{\mu}(M)$ then there exists a continuous splitting $N_x=N^1_x\oplus N^2_x$, for every $x\in M$.
\end{lemma}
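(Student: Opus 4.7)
The plan is to reuse, and then iterate, the dominated splitting construction that already appears inside the proof of Lemma \ref{Gnosing}. First, Lemma \ref{Gnosing} gives $Sing(X)=\emptyset$, so $N_x$ is well defined at every point of $M$. I would fix a small $C^1$-neighborhood $\mathcal{U}\subset\mathcal{G}^1_\mu(M)$ of $X$ and apply Theorem \ref{BGV} with a parameter $\epsilon>0$ smaller than the $C^1$-radius of $\mathcal{U}$, obtaining uniform constants $\ell,\tau>0$ that depend only on $\mathcal{U}$.

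Next I would pick $Y\in\mathcal{U}\cap\mathcal{PR}^1_\mu(M)\cap\mathcal{R}$, with $\mathcal{R}$ as in Theorem \ref{topmix}; since $Y\in\mathcal{G}^1_\mu(M)$, Lemma \ref{Gnosing} again gives $Sing(Y)=\emptyset$. For every closed orbit $\gamma$ of $Y$ with period larger than $\tau$, the dichotomy of Theorem \ref{BGV} must produce an $\ell$-dominated splitting on $\gamma$: otherwise there would exist an $\epsilon$-perturbation $\tilde Y\in\mathcal{U}$ with $P^{\pi(\gamma)}_{\tilde Y}(\gamma(0))=\mathrm{id}$, so $\gamma$ would be a non-hyperbolic closed orbit of $\tilde Y$, contradicting $\tilde Y\in\mathcal{G}^1_\mu(M)$. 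After passing to a subsequence in which the dimensions of $N^1$ and $N^2$ are constant (there are only finitely many possibilities), the density of long-period closed orbits of $Y$ and the same $\limsup$ argument used at the end of the proof of Lemma \ref{Gnosing} produce a continuous, $P^t_Y$-invariant, $\ell$-dominated splitting $N=N^1(Y)\oplus N^2(Y)$ defined on the whole of $M=M\setminus Sing(Y)$.

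Finally I would approximate $X$ in the $C^1$ topology by a sequence $Y_n\in\mathcal{U}\cap\mathcal{PR}^1_\mu(M)\cap\mathcal{R}$, each carrying by the previous step a global $\ell$-dominated splitting $N=N^1(Y_n)\oplus N^2(Y_n)$. After one more subsequence to fix the dimensions, a Grassmannian compactness and diagonal argument, combined with the uniform convergence $P^t_{Y_n}\to P^t_X$ on compact time intervals and the uniformity of the domination constant $\ell$, yields a $P^t_X$-invariant splitting $N_x=N^1_x\oplus N^2_x$ of $M$ which is still $\ell$-dominated. Continuity of the splitting is then automatic, since any dominated splitting is continuous on its invariant set.

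The main obstacle is the bookkeeping of these two successive limits: first extending the dominated splitting from dense long-period closed orbits of $Y$ to all of $M$, and then transferring the splitting from $Y_n$ to $X$ while keeping the dimensions of the subbundles constant and the $\ell$-domination uniform. The star property is used only in one place, namely to rule out the second alternative of Theorem \ref{BGV}; once domination on long closed orbits is established, everything else is soft compactness and the standard closability of dominated splittings.
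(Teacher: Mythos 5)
Your approach is correct in substance, but it is genuinely different from the paper's, and it is also heavier than what the lemma actually needs. The paper works with $X$ itself: since $X\in\mathcal{G}^1_\mu(M)$, by Ma\~n\'e's result \cite[Lemma 3.1]{M1} together with Poincar\'e recurrence one has $\overline{Per(X)}=\Omega(X)=M$, and each $p\in Per(X)$ already carries the hyperbolic normal splitting $N^s_p\oplus N^u_p$; the paper then defines $N^1,N^2$ on all of $M$ by taking subsequential limits of this splitting along periodic points and uses the uniform transversality over $Per(X)$ to conclude $N^1_x\cap N^2_x=\{0\}$. You instead take a \emph{double} limit: first replace $X$ by a generic $Y\in\mathcal U\cap\mathcal{PR}^1_\mu\cap\mathcal R$, show via the Theorem~\ref{BGV} dichotomy that long closed orbits of $Y$ carry an $\ell$-dominated splitting, extend that to $M$ by density, and then let $Y_n\to X$ and pass to a Grassmannian limit. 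This avoids invoking $\overline{Per(X)}=M$ for the star field itself, at the cost of two subsequence extractions and the need to argue that long-period closed orbits of $Y$ are dense (which does hold: short-period orbits of a nonsingular star field are finite in number since hyperbolic closed orbits of bounded period cannot accumulate on a nonsingular set). One structural remark: because the $\ell$ produced by Theorem~\ref{BGV} is uniform over $\mathcal U$, your construction actually yields an $\ell$-\emph{dominated} splitting for $X$ rather than merely a continuous one, so you are in effect proving Lemma~\ref{Gdomsplit} at the same time, whereas the paper keeps these two steps separate and establishes domination by a different (Lyapunov-exponent) contradiction argument. That is not a mistake, but it means your ``proof of Lemma~\ref{splitlemma}'' is really a merged proof of Lemmas~\ref{splitlemma} and~\ref{Gdomsplit}; if you adopt it you should say so and drop the separate argument of Lemma~\ref{Gdomsplit}, or else present only the weaker conclusion here.
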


\begin{proof}
Take $X\in\mathcal{G}^1_{\mu}(M)$ and recall that, by Lemma \ref{Gnosing}, $Sing(X)=\emptyset$. So, we have that $N^1_p=N^s_p$ and $N^2_p=N^u_p$, for any $p\in Per(X)$. To extend these fibers to any $x\in M$, fix $y\notin Per(X)$ and a sequence $\{y_n\}_n\in Per(X)$ such that $\displaystyle\lim_{n\rightarrow+\infty} y_n=y$ and $\displaystyle\lim_{n\rightarrow+\infty} N^{{2},{1}}_{X^t(y_n)}= N^{{u},{s}}_{X^t(y)}$. So, any $x\in M$ has attached the subspaces $N^{{2},{1}}_x$ such that 
\begin{align}\label{dim}
\dim N^1_x+\dim N^2_x=\dim M-1 
\end{align}
and $P^t_X(x)( N^{{2},{1}}_x)= N^{{2},{1}}_{X^t(x)}$.
Notice that, by \cite[Lemma 3.1]{M1}, since \linebreak$X\in\mathcal{G}^1_{\mu}(M)$ then $\overline{Per(X)}=\Omega(X)=M$. So, the domination over $Per(X)$, that can be extended to $\overline{Per(X)}=M$, leads to $N^1_x\cap N^2_x=\left\{0\right\}$, for any $x\in M$. This with (\ref{dim}) implies that $N_x=N_x^1\oplus N_x^2$ and that the fibers depend continuously on $x$, for any $x\in M$.
\end{proof}

The proof of the next lemma uses a generalization, for the higher-di\-men\-sio\-nal context, of the adopted techniques in the proof of Lemma~3.1 in \cite{BR2}. However, at this point, we already know that a vector field in $\mathcal{G}^1_{\mu}(M)$ has not singularities.

\begin{lemma}\label{Gdomsplit}
If $X\in\mathcal{G}^1_{\mu}(M)$ then ${P}_X^t$ admits a dominated splitting over $M$.
\end{lemma}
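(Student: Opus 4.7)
The plan is to follow the strategy used in the proof of Lemma \ref{Gnosing}: combine Theorem \ref{BGV} with the star property to establish an $\ell$-dominated splitting on suitable closed orbits, and then transfer this to $M$ by a density plus limit argument. By Lemma \ref{Gnosing} we already know that $Sing(X)=\emptyset$. I would choose a $C^1$-neighborhood $\mathcal{U}\subset\mathcal{G}^1_\mu(M)$ of $X$ small enough that Theorem \ref{BGV} applies with some $\epsilon>0$, producing constants $\ell,\tau>0$. The first observation is that for any $Y\in\mathcal{U}$ and any closed orbit of $Y$ of period greater than $\tau$, the second alternative of Theorem \ref{BGV}---namely, an $\epsilon$-$C^1$-perturbation $\tilde Y$ with $P^{\pi(x)}_{\tilde Y}(x)=id$---is impossible, since with $\epsilon$ chosen small enough we have $\tilde Y\in\mathcal{G}^1_\mu(M)$, and the identity return map forces the closed orbit of $\tilde Y$ to be non-hyperbolic, contradicting the star property of $\tilde Y$. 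Hence $P^t_Y$ admits an $\ell$-dominated splitting over every closed orbit of $Y$ of period greater than $\tau$.

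Next, I would approximate $X$ by a sequence $Y_j\to X$ with $Y_j\in\mathcal{U}\cap\mathcal{R}\cap\mathcal{PR}^1_\mu(M)$, where $\mathcal{R}$ is the topologically mixing residual from Theorem \ref{topmix} and $\mathcal{PR}^1_\mu(M)$ is the residual in which closed orbits are dense. For each fixed $j$, since $Y_j$ has a dense orbit, Theorem \ref{closing} produces a sequence $Y_{j,n}\to Y_j$ in $C^1$ with $Y_{j,n}\in\mathcal{U}$ for large $n$, each carrying a closed orbit $\Gamma_{j,n}$ of period $\pi_{j,n}\to+\infty$ whose trajectory shadows ever-longer initial segments of the dense orbit of $Y_j$. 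By the first step, $P^t_{Y_{j,n}}$ admits an $\ell$-dominated splitting $E^1_{j,n}\oplus E^2_{j,n}$ over $\Gamma_{j,n}$ for large $n$. Extracting a subsequence so that the dimensions of these bundles are constant in $n$, the shadowing forces $\limsup_n\Gamma_{j,n}=M$; taking $C^0$-limits of the continuous bundle sections along convergent sequences of points, and using that the $\ell$-domination inequality is closed in the $C^0$ topology on bundles together with the continuous dependence of $P^t_{\bullet}$ on the vector field in the $C^1$ topology, I would obtain a $P^t_{Y_j}$-invariant $\ell$-dominated splitting of $N$ over $M$.

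Finally, letting $j\to+\infty$ and extracting one further subsequence so the dimensions stabilize, a $C^0$-limit of the bundle sections for $Y_j$ yields an $\ell$-dominated splitting for $P^t_X$ over $M$, as required.

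The main difficulty will be controlling the two successive passages to limits---verifying that the bundle sections converge to a well-defined continuous splitting of $N$ and that the domination inequality survives in the limit. This reduces to compactness of the Grassmannian of $N$, the closedness of the $\ell$-domination condition, and uniqueness of the dominated splitting of prescribed dimensions, which ensures that the limit does not depend on the choice of approximating sequence.
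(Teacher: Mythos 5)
Your proof is correct, and it takes a genuinely different route from the paper's. The paper proves Lemma~\ref{Gdomsplit} by contradiction: starting from the continuous splitting $N^1\oplus N^2$ supplied by Lemma~\ref{splitlemma}, it assumes non-domination, invokes the perturbation result of \cite[Theorem~1]{BR0} to produce a nearby vector field with a long closed orbit $p$ on which $\exp(-\delta\pi)<\|P^\pi(p)\|<\exp(\delta\pi)$, and then combines the $\ell_0$-domination along $p$ (from Theorem~\ref{BGV}) with the divergence-free constraint $\sum_i\lambda_i(p)=0$ (Remark~\ref{remsum0}) to force $\tfrac{1}{\pi}\log\|P^\pi(p)\|>\delta$, a contradiction. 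Your proof instead lifts the closed-orbit/limsup argument already used inside the proof of Lemma~\ref{Gnosing} to conclude directly: after excluding the second BGV alternative via the star property, you get a uniform $\ell$-dominated splitting over $M$ for a sequence $Y_j\to X$ and then pass to the limit in $j$. This is constructive rather than by contradiction, bypasses \cite{BR0} and the Lyapunov-exponent bookkeeping, and does not rely on Lemma~\ref{splitlemma} at all, which removes the slightly awkward circularity in the paper's presentation (Lemma~\ref{splitlemma} already appeals to ``the domination over $Per(X)$'' before Lemma~\ref{Gdomsplit} has been proved). The trade-off is that you must carry out two successive Grassmannian-limit arguments; you correctly identify the ingredients needed (compactness of the Grassmannian, closedness of the $\ell$-domination condition, continuity of $P^t_\bullet$ in the vector field, and uniqueness of the dominated splitting of prescribed index to make the limit independent of the approximating sequence), and with those in hand the argument is sound.
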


\begin{proof}
Take $X\in\mathcal{G}^1_{\mu}(M)$ and $\mathcal{U}$ a $C^1$-neighborhood of $X$ in $\mathcal{G}^1_{\mu}(M)$, small enough such that Theorem \ref{BGV} holds. 
By Lemma \ref{Gnosing}, we have that $M$ is regular for $X$. 
So, $P_X^t$ is well defined on $M$.
It follows that there exists $\mathcal{V}\subset\mathcal{U}$, a $C^1$-neighborhood of $X$ in $\mathcal{G}^1_{\mu}(M)$, whose elements do not have singularities. 
By Lemma \ref{splitlemma}, we have a continuous splitting $N=N^1\oplus N^2$ over $M$.
By contradiction, assume that this splitting is not dominated. 
So, we claim that
\begin{claim}
For all $\ell\in\mathbb{N}$, there exists an $X^t$-invariant and measurable set $\Gamma_{\ell}\in M$ such that $\mu(\Gamma_{\ell})>0$ and $\Gamma_{\ell}$ does not have an  $\ell$-dominated splitting for ${P}_X^t$.
\end{claim}
In fact, if the claim was not true, there would exist $\ell\in\mathbb{N}$ such that $M$ has an $\ell$-dominated splitting for ${P}_X^t$, which contradicts our assumption.
The existence of these sets $\Gamma_{\ell}$ without an $\ell$-dominated splitting, for any $\ell\in\mathbb{N}$, allow us to use the techniques involved in the proof of \cite[Theorem 1]{BR0} in order to conclude that, for any $\epsilon>0$, there exists $\ell\in\mathbb{N}$, large enough, such that, for any $\eta>0$ arbitrarily small and for $\mu$-almost every point $x\in\Gamma_{\ell}$, we can find $t_0>0$ and $X_1\in\mathcal{U}$, $\epsilon$-$C^1$-close to $X$, satisfying $$exp({-\eta t})<\|P^t_{X_1}(x)\|<exp({\eta t}), \ \ \forall\: t>t_0.$$

Now, let $R\subset\Gamma_{\ell}$ be the full $\mu$-measure set of recurrent points, given by the Poincar\'{e} recurrence theorem with respect to $X_1$, and let $\mathcal{Z}_{\eta}\subset\Gamma_{\ell}$ be the set of points with Lyapunov exponent, associated to $X_1$, less than $\eta$.

So, fixing $\delta\in\big(0, \frac{\log2}{(n-1)\ell}\big)$ and $\eta<\delta$, given $x\in \mathcal{Z}_{\eta}\cap R$, there exists $t_x\in\mathbb{R}$ such that $$exp({-\delta t})<\|P^t_{X_1}(x)\|<exp({\delta t}), \ \ \forall\: t>t_x,$$ where we can assume that $t_x\geq T$.

Now, once $x\in\mathcal{Z}_{\eta}\cap R$, by the volume preserving Closing Lemma (Theorem \ref{closing}), the $X_1^t$-orbit of $x$ can be approximated by a closed orbit $p$ with period $\pi$ of a $C^1$-close vector field $X_2\in\mathcal{U}$. So, letting $r>0$ be small enough in Theorem \ref{closing}, we can take $\pi>\max\{\tau, T\}$ arbitrarily large, where $\tau>0$ is given by Theorem \ref{BGV}, and is such that 
\begin{align}
\exp(-\delta\pi)<\|P_{X_2}^{\pi}(p)\|<\exp(\delta\pi).\label{bound}
\end{align}

Note that $X_2\in\mathcal{U}$, a $C^1$-neighborhood of $X$ in $\mathcal{G}^1_{\mu}(M)$, and that $p$ is a $X_2$-closed orbit with period $\pi>\tau$, obviously hyperbolic. 
So, by Theorem \ref{BGV}, there is $\ell_0>0$ such that $P^t_{X_2}$ admits an $\ell_0$-dominated splitting $N_q=N^1_q\oplus\cdots\oplus N^k_q$, $2\leq k\leq n-1$, such that $$\|P^{\ell_0}_{X_2}(q)|_{N^i_q}\|\cdot \|P^{-\ell_0}_{X_2}(q)|_{N^j_q}\| \leq\dfrac{1}{2},$$ for every $0\leq i<j\leq k$ and every $q\in\mathcal{O}_{X_2}(p)$.

Now, given that $p$ is a hyperbolic saddle with period $\pi$ for $X_2$, let us assume that $P^{\pi}_{X_2}(p)$ admits the following Lyapunov spectrum: $$\lambda_1(p)\geq ...\geq\lambda_r(p)>0>\lambda_{r+1}(p)\geq...\geq\lambda_{k}(p).$$ So, let $N_p^u=N_p^1\oplus\cdots\oplus N_p^r$ and $N^s_p=N_p^{r+1}\oplus\cdots\oplus N_p^k$.

Let $[a]$ denote the integer part of $a$ and observe that
\begin{align}
&\|P^{\pi}_{X_2}(p)|_{N_p^s}\|\cdot\|P^{-\pi}_{X_2}(p)|_{N_p^u}\|\nonumber\\
&=\|P^{\pi-{\ell_0}[\pi/{\ell_0}]+{\ell_0}[\pi/{\ell_0}]}_{X_2}(p)|_{N_p^s}\|\cdot  \|P^{-\pi-{\ell_0}[\pi/{\ell_0}]+{\ell_0}[\pi/{\ell_0}]}_{X_2}(p)|_{N_p^u}\|\nonumber\\
&\leq \|P^{\pi-{\ell_0}[\pi/{\ell_0}]}_{X_2}(p)|_{N_p^s}\|\cdot \|P^{{\ell_0}[\pi/{\ell_0}]}_{X_2}(X_2^{{\ell_0}[\pi/{\ell_0}]}(p))|_{N_{X_2^{{\ell_0}[\pi/{\ell_0}]}(p)}^s}\|\cdot \nonumber\\
&\ \ \ \ \ \cdot \|P^{-\pi+{\ell_0}[\pi/{\ell_0}]}_{X_2}(p)|_{N_p^u}\|\cdot \|P^{-{\ell_0}[\pi/{\ell_0}]}_{X_2}(X_2^{-{\ell_0}[\pi/{\ell_0}]}(p))|_{N_{X_2^{-{\ell_0}[\pi/{\ell_0}]}(p)}^u}\| \nonumber\\
&\leq C(p, X_2)\: \displaystyle\prod_{i=1}^{[\pi/{\ell_0}]} \|P^{{\ell_0}}_{X_2}(X_2^{{\ell_0}}(p))|_{N_{X_2^{i{\ell_0}}(p)}^s}\|\cdot \|P^{-{\ell_0}}_{X_2}(X_2^{-{\ell_0}}(p))|_{N_{X_2^{-i{\ell_0}}(p)}^u}\|\nonumber\\
&\leq C(p, X_2)\: \bigg(\dfrac{1}{2}\bigg)^{[\pi/{\ell_0}]},\nonumber
\end{align}
where $C(p, X_2)=\displaystyle \sup_{0\leq t \leq {\ell_0}}\Big(\|P^{t}_{X_2}(p)|_{N_p^s}\| \cdot \|P^{-t}_{X_2}(p)|_{N_p^u}\|\Big)$. Once $C(p, X_2)$ depends continuously on $X_2$, in the $C^1$-topology, there exists a uniform bound for $C(p,\cdot)$, for every vector field which is $C^1$-close to $X_2$.

As it was mentioned in Remark \ref{remsum0}, we have that $\displaystyle\sum_{i=1}^{k}\lambda_i(p)=0$. So, observing that $\|P^{\pi}_{X_2}(p)\|=\|P^{\pi}_{X_2}(p)|_{N_p^2}\|$, one has that
\begin{align}
\dfrac{1}{\pi}\log\|P^{\pi}_{X_2}(p)|_{N_p^1}\|&=\lambda_{r+1}(p)=-\displaystyle\sum_{\stackrel{i=1}{i\neq r+1}}^{k}\lambda_i(p)\nonumber\\
&\geq -(k-1)\lambda_1(p)= \dfrac{-(k-1)}{\pi}\log\|P^{\pi}_{X_2}(p)|_{N_p^u}\|\nonumber\\
&=\dfrac{-(k-1)}{\pi}\log\|P^{\pi}_{X_2}(p)\|\nonumber.
\end{align}

So, from $\|P^{\pi}_{X_2}(p)|_{N_p^s}\|  \|P^{-\pi}_{X_2}(p)|_{N_p^u}\| \leq C(p, X_2)\: \bigg(\dfrac{1}{2}\bigg)^{[\pi/{\ell_0}]}$
and knowing that $\|P^{\pi}_{X_2}(p)|_{N_p^u}\|^{-1}\leq \|P^{-\pi}_{X_2}(p)|_{N_p^u}\|$, we have that
\begin{align}
&\|P^{\pi}_{X_2}(p)|_{N_p^s}\|  \|P^{\pi}_{X_2}(p)|_{N_p^u}\|^{-1} \leq C(p, X_2)\: \bigg(\dfrac{1}{2}\bigg)^{[\pi/{\ell_0}]}\nonumber\\
\Leftrightarrow &\log\|P^{\pi}_{X_2}(p)|_{N_p^s}\|-\log \|P^{\pi}_{X_2}(p)|_{N_p^u}\|\leq \log C(p, X_2) -[\pi/{\ell_0}]\log 2\nonumber\\
\Leftrightarrow & \dfrac{1}{\pi} \log \|P^{\pi}_{X_2}(p)\| \geq -\dfrac{\log C(p, X_2)}{\pi} +\dfrac{[\pi/{\ell_0}]\log 2}{\pi}  + \dfrac{1}{\pi} \log\|P^{\pi}_{X_2}(p)|_{N_p^s}\|\nonumber\\
\Leftrightarrow & \dfrac{1}{\pi}\log \|P^{\pi}_{X_2}(p)\|  \geq -\dfrac{\log C(p, X_2)}{\pi} +\dfrac{[\pi/{\ell_0}]\log 2}{\pi}\nonumber\\
& \ \ \ \ \ \ \ \ \ \ \ \ \ \ \ \ \ \ \ \ \ \ \ - \dfrac{(k-1)}{\pi}\log\|P^{\pi}_{X_2}(p)\|\nonumber.
\end{align}

Now, taking $\pi$ arbitrarily large,

$$\dfrac{1}{\pi}\log \|P^{\pi}_{X_2}(p)\|  \geq \dfrac{\log2}{k{\ell_0}}\geq \dfrac{\log2}{(n-1){\ell_0}} >\delta.$$

But this contradicts (\ref{bound}). Then ${P}_X^t$ admits a dominated splitting $N=N^1\oplus N^2$ over $M$.
\end{proof}

\begin{remark}\label{remlemma}
Notice that the previous lemma is also true if we suppose that $X$ is an isolated point in the boundary of $\mathcal{A}_{\mu}(M^n)$, for fixed $n\geq 4$. In fact, in Lemma \ref{Gdomsplit} we need $X\in\mathcal{G}^1_{\mu}(M^n)$ in order to ensure a dominated splitting over a closed orbit $x$, with large enough period $\pi$, for a vector field $Y$, $C^1$-close to $X$, given by Theorem \ref{BGV}. However, if we start the proof by assuming that $X$ is an isolated point in the boundary of $\mathcal{A}_{\mu}(M^n)$, we must obtain the same conclusion, because any $C^1$-perturbation $\tilde{Y}$ of $Y$ must be Anosov, and so cannot satisfy $P^{\pi}_{\tilde{Y}}(x)=id$.
\end{remark}


Next lemma is an adaption of the ideas of Ma\~{n}\'{e} (\cite {M}) to our setting.

\begin{lemma}\label{mane1}
Take $X\in\mathfrak{X}_{\mu}^1(M)$ and assume that $M$ is regular and that any $x\in M$ admits a dominated splitting $N_x=N_x^1\oplus N_x^2$. 
If $\displaystyle\liminf_{t\rightarrow\infty}\|P_X^t(x)|_{N^1_x}\|=0$ and $\displaystyle\liminf_{t\rightarrow\infty}\|P_X^{-t}(x)|_{N^2_x}\|=0$, for all $x\in M$, then $M$ is hyperbolic.
\end{lemma}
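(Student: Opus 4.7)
The plan is to upgrade the pointwise $\liminf$ decay hypothesis to uniform exponential contraction of $N^1$ in forward time and of $N^2$ in backward time, which is exactly what the definition of hyperbolicity demands. The proof splits into two symmetric halves, so I will describe only the forward half for $N^1$; the backward half for $N^2$ is analogous, replacing $X$ by $-X$ and swapping the roles of the bundles.

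First I would record that the dominated splitting $N = N^1 \oplus N^2$ over the compact invariant regular set $M$ is automatically continuous in $x$; hence, for each fixed $T > 0$, the function $x \mapsto \|P_X^T(x)|_{N^1_x}\|$ is continuous on $M$. Define the open sets
\[
U_T := \bigl\{x \in M : \|P_X^T(x)|_{N^1_x}\| < \tfrac{1}{2}\bigr\}, \qquad T > 0.
\]
The hypothesis $\liminf_{t\to\infty}\|P_X^t(x)|_{N^1_x}\| = 0$ guarantees that every $x \in M$ lies in some $U_T$, so $\{U_T\}_{T>0}$ is an open cover of $M$. By compactness, there is a finite subcover $U_{T_1}, \ldots, U_{T_k}$; set $T_0 = \max_i T_i$. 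Thus, for every $x \in M$, there exists a visiting time $\tau(x) \in \{T_1, \ldots, T_k\} \subset (0, T_0]$ with $\|P_X^{\tau(x)}(x)|_{N^1_x}\| < \tfrac{1}{2}$.

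Next I would iterate: set $x_0 = x$ and $x_{j+1} = X^{\tau(x_j)}(x_j)$, and write $s_n = \sum_{j=0}^{n-1} \tau(x_j) \leq n T_0$. Using $P_X^t$-invariance of $N^1$ and the submultiplicative property of the linear Poincar\'e flow, we obtain
\[
\|P_X^{s_n}(x)|_{N^1_x}\| \leq \prod_{j=0}^{n-1} \|P_X^{\tau(x_j)}(x_j)|_{N^1_{x_j}}\| \leq 2^{-n} \leq 2^{-s_n/T_0}.
\]
Since $\sup\{\|P_X^r(y)\| : y \in M, \, 0 \leq r \leq T_0\}$ is finite by continuity and compactness, interpolating an arbitrary $t > 0$ as $t = s_n + r$ with $r \in [0, T_0]$ yields a uniform estimate $\|P_X^t(x)|_{N^1_x}\| \leq C \, e^{-\lambda t}$ for constants $C \geq 1$, $\lambda > 0$ independent of $x$. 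Choosing $\ell$ large enough that $C e^{-\lambda \ell} \leq \tfrac{1}{2}$ gives the required hyperbolic contraction of $N^1$ on all of $M$.

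Running the symmetric argument with $P_X^{-t}$ on $N^2$ yields the matching expansion inequality $\|P_X^{-\ell}(X^\ell(x))|_{N^2_{X^\ell(x)}}\| \leq \tfrac{1}{2}$ (after enlarging $\ell$ if necessary to a common value). Combined with the invariance and continuity of the splitting already granted by the dominated splitting hypothesis, this verifies every clause of the definition of hyperbolicity, so $M$ is hyperbolic. I expect the main obstacle to be the uniformization step: the $\liminf$ hypothesis is purely pointwise and gives no a priori control on the visiting time $\tau(x)$, so the whole argument hinges on the continuity of $x \mapsto \|P_X^T(x)|_{N^1_x}\|$ (inherited from the continuity of the dominated splitting) to turn the sets $U_T$ into an open cover, together with compactness of $M$ to extract a single finite bound $T_0$. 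Without the dominated splitting providing continuity of $N^1$, this covering step would fail.
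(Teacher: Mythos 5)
Your argument is correct and follows essentially the same route as the paper's proof: both exploit the continuity of the dominated splitting to turn the pointwise $\liminf$ hypothesis into an open cover by sets where a bounded-time contraction holds, use compactness to extract a finite subcover with a uniform time bound $T_0$, and then iterate and interpolate to obtain uniform exponential contraction. The only cosmetic difference is that the paper covers $M$ by neighborhoods $B(x_i)$ on which the fixed time $t_{x_i}$ contracts, while you cover by the sets $U_T$ parameterized by the contracting time; the iteration and the role of the dominated splitting are identical.
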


\begin{proof}
By hypothesis, for any $x\in M$ we can find $t_x$ such that \linebreak $\|P_X^{t_x}(x)|_{N^1_x}\|<1/3$ and so, every $x\in M$ has a neighborhood $B(x)$ such that every $y\in B(x)$ satisfies $\|P_X^{t_x}(y)|_{N^1_{y}}\|<1/2$.

Since $M$ is compact, there are $x_1,...,x_n\in M$, such that $M\subset \displaystyle\bigcup_{i=1}^n B(x_i)$. So, given any $y\in M$, there is $1\leq i_1\leq n$ such that $y\in B(x_{i_1})$.

Let $K_0=\sup\{\|P_X^t(y)|_{N^1_y}\|, y\in B(x_i), 0\leq t \leq t_{x_i}, 1\leq i\leq n\}$, $j_0$ be such that ${K_0}/{2^{j_0}}<{1}/{2}$ and $T_0>j_0\sup\{t_{x_i}, 1\leq i\leq n\}$. Let us see that $T_0$ is the uniform hyperbolicity constant.

Take $t_{i_1},...,t_{i_{k+1}}$ and $l_j=t_{i_1}+...+t_{i_j}$, $1\leq j\leq k+1$, such that

(1) $X^{l_j}(y)\in B(x_{i_{j+1}}), 1\leq j\leq k$,

(2) $l_k\leq T_0 \leq l_{k+1}$.

From the previous, we observe that $k\geq j_0$ and $0\leq T_0-l_k\leq t_{i_{k+1}}$. 

So, for any $y\in M$,
\begin{align}
\|P_X^{T_0}(y)|_{N^1_y}\|&=\|P_X^{T_0-l_k+l_k}(y)|_{N^1_y}\|\nonumber \\
&\leq\|P_X^{T_0-l_k}(X^{l_k}(y))|_{N^1_{X^{l_k}(y)}}\|   \cdot  \|P_X^{t_{i_1}}(y)|_{N^1_y}\|\nonumber \cdot\\ 
&
\ \ \ \ \cdot\|P_X^{t_{i_2}}(X^{l_1}(y))|_{N^1_{X^{l_1}(y)}}\|\cdots \|P_X^{t_{i_k}}(X^{l_{k-1}}(y))|_{N^1_{X^{l_{k-1}}(y)}}\|\nonumber \\
&\leq K_0 \;\dfrac{1}{2^k}\leq K_0 \;\dfrac{1}{2^{j_0}}<\dfrac{1}{2}.\nonumber
\end{align}
Changing $P^t_X$ by $P^{-t}_X$, the second case can be derived from this one.

\end{proof}

\end{section}

In the following lemma, we show that a divergence-free star vector field has uniform hyperbolicity in the period, which is a crucial step to derive hyperbolicity from Lemma \ref{Gdomsplit}.

\begin{lemma}(Uniform hyperbolicity in the period)\label{unifhyper}
Fix $X\in\mathcal{G}^1_{\mu}(M)$. There exist $\mathcal{U}$, a $C^1$-neighborhood of $X$ on $\mathcal{G}^1_{\mu}(M)$, and $\theta\in (0,1)$ such that, for any $Y\in\mathcal{U}$, if $p\in Per(Y)$ has period $\pi_Y(p)$ and has the hyperbolic splitting $N_p=N^s_p\oplus N_p^u$ then:
\begin{enumerate}
	\item [(a)]  $\|P_Y^{\pi_Y(p)}(p)|_{N^s_{p}}\|<\theta^{\pi_Y(p)}$  and
	\item [(b)] $\|P_Y^{-\pi_Y(p)}(p)|_{N^u_{p}}\|<\theta^{\pi_Y(p)}$. 
\end{enumerate}
\end{lemma}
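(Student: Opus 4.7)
The plan is to argue by contradiction. If the conclusion fails, there exist sequences $Y_n \to X$ in $\mathcal{G}^1_\mu(M)$ and hyperbolic periodic points $p_n$ of $Y_n$ with periods $\pi_n = \pi_{Y_n}(p_n)$ and hyperbolic splittings $N_{p_n} = N^s_{p_n}\oplus N^u_{p_n}$ satisfying $\|P^{\pi_n}_{Y_n}(p_n)|_{N^s_{p_n}}\|^{1/\pi_n} \to 1^-$. Part (b) follows from (a) applied to $-Y_n$, which is also divergence-free and inherits the star property, so I concentrate on (a). The goal is to produce, for large $n$, a small $C^1$-perturbation $\tilde Y_n$ of $Y_n$ that preserves the orbit of $p_n$ but modifies the linear Poincar\'e flow so that $P^{\pi_n}_{\tilde Y_n}(p_n)$ has an eigenvalue of modulus $1$ in the stable direction. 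Since $\mathcal{G}^1_\mu(M)$ is $C^1$-open, this would give a non-hyperbolic closed orbit of a star vector field, contradicting the star property.

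The construction proceeds by fixing a unit vector $v_n \in N^s_{p_n}$ essentially realising $\|P^{\pi_n}_{Y_n}(p_n)|_{N^s_{p_n}}\|$, setting $\lambda_n = \pi_n^{-1}\log\|P^{\pi_n}_{Y_n}(p_n)\,v_n\|$ (so $\lambda_n \to 0^-$), subdividing $Y_n^{[0,\pi_n]}(p_n)$ into $\sim \pi_n$ consecutive disjoint flowboxes of duration $\tau \in [1,2]$, and applying the conservative Franks' lemma (Theorem~\ref{Franks}) on each with a one-parameter linear family $\{A^{(k)}_t\}$ inducing a tiny additional expansion of exponential rate $-\lambda_n$ along the forward-propagated direction $P^t_{Y_n}(v_n)$, while acting as a volume-preserving isometry on a complementary subspace. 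Since $|\lambda_n|\to 0$, the instantaneous logarithmic derivatives $\|(A^{(k)}_t)'(A^{(k)}_t)^{-1}\|$ are of order $|\lambda_n|$, hence below the Franks' threshold $\xi_0$ for large~$n$; splicing the $\sim\pi_n$ perturbations via the Pasting Lemma (Theorem~\ref{pasting}) yields the desired $\tilde Y_n \in \mathcal{U}$ with the cumulative effect $P^{\pi_n}_{\tilde Y_n}(p_n)\,v_n = v_n$, as required.

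The main obstacle is the bookkeeping in the second step: one must check that the $\sim\pi_n$ individually tiny local linear corrections compose to the intended total multiplicative correction $e^{-\lambda_n\pi_n}$ along the propagated $v_n$-direction while keeping a volume-preserving complement essentially untouched, and that the total $C^1$-size of $\tilde Y_n - Y_n$ remains controlled uniformly in $n$ despite $\pi_n \to \infty$. This is the classical Ma\~n\'e-type eigenvalue-mixing procedure adapted to the divergence-free continuous-time setting, analogous to the arguments used in dimension three by Bessa and Rocha (\cite{BR2}) and in the dissipative case by Gan and Wen (\cite{GW}). A separate and easier subcase handles sequences with bounded $\pi_n$: extracting $\pi_n \to \pi^*$, $p_n \to p^*$ and $N^s_{p_n} \to V$, one obtains a periodic orbit $p^*$ of $X$ of period dividing $\pi^*$ with $P^{\pi^*}_X(p^*)$-invariant subspace $V$ satisfying $\|P^{\pi^*}_X(p^*)|_V\| \ge 1$; a single localised application of Franks' lemma in a flowbox around $p^*$ then produces an eigenvalue of modulus $1$ on $V$, so again a non-hyperbolic closed orbit of a perturbation of $X$, contradicting $X \in \mathcal{G}^1_\mu(M)$.
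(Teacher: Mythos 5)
Your proof follows essentially the same strategy as the paper's: argue by contradiction that a periodic orbit of a nearby vector field has arbitrarily weak contraction in the period, smooth to $C^4$ via Zuppa, subdivide the orbit into $\sim\pi$ unit-time blocks, apply the conservative Franks' lemma (Theorem~\ref{Franks}) on each block with a small one-parameter linear expansion whose instantaneous size is below the threshold $\xi_0$, splice with the Pasting Lemma (Theorem~\ref{pasting}) to obtain a nearby divergence-free vector field whose period map on $N^s_p$ loses hyperbolicity, contradicting the star property.

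The two arguments differ only in how the elementary perturbation at each block is calibrated. The paper uses a pigeonhole to locate a single point $\tilde q$ on the orbit with $\|P^1_{\tilde Y}(\tilde q)|_{N^s}\|\geq\theta$ and then applies the corresponding homothety $A_1$ of ratio $\tfrac{1}{1-\gamma}$ on every block; you instead distribute the expansion uniformly, at the geometric-mean rate $e^{|\lambda_n|}$ per unit time, which is arguably cleaner since the total effect over the period then matches $\|P^{\pi_n}|_{N^s}\|^{-1}$ exactly. You also deduce (b) from (a) by time reversal (which is correct, since $-Y\in\mathcal{G}^1_\mu(M)$ whenever $Y\in\mathcal{G}^1_\mu(M)$), where the paper merely writes ``similar argument,'' and you treat the bounded-period subcase separately, which the paper tacitly folds into the main case. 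One point to be careful about, and which affects both your write-up and the paper's: a unit vector $v_n$ realising $\|P^{\pi_n}_{Y_n}(p_n)|_{N^s_{p_n}}\|$ is a singular vector, not in general an eigenvector, so making $\|P^{\pi_n}_{\tilde Y_n}(p_n)v_n\|=1$ does not literally give $P^{\pi_n}_{\tilde Y_n}(p_n)v_n=v_n$ or an eigenvalue on the unit circle; the standard fix (a preliminary rotation-type perturbation within $N^s$ to align $v_n$ with an eigendirection, or working with the spectral radius rather than the operator norm) should be invoked explicitly. This caveat aside, the approach is the same as the paper's.
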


\begin{proof}
Take $X\in\mathcal{G}^1_{\mu}(M)$ and $\mathcal{U}$ a $C^1$-neighborhood of $X$ in $\mathcal{G}^1_{\mu}(M)$. So, for every $p\in Per(Y)$ with period $\pi_Y(p)$, where $Y\in\mathcal{U}$, we have that $p$ is a hyperbolic saddle, meaning that $N_p=N^s_p\oplus N^u_p$ and that there is a constant $\theta_p\in (0,1)$ such that $\|{P}_Y^{\pi_Y(p)}(p)|_{{N}^s_{p}}\|<\theta_p^{\pi_Y(p)}$ and $\|{P}_Y^{-\pi_Y(p)}(p)|_{{N}^u_{p}}\|<\theta_p^{\pi_Y(p)}$. However, we want $\theta_p$ to be uniform. 

Let us prove (a). Suppose that, by contradiction, for any $\theta\in (0,1)$ there exists $Y\in\mathcal{U}$, $C^1$-arbitrarily close of $X$, and $p\in Per(Y)$ with period $\pi_Y(p)$, hyperbolic by hypothesis, such that $$\theta^{\pi_Y(p)}\leq\|{P}_Y^{\pi_Y(p)}(p)|_{{N}^s_{p}}\|.$$

In order to apply Theorem \ref{Franks}, we must have a $C^4$-vector field.
So, using Zuppa's theorem (Theorem \ref{zuppa}), we start by $C^1$-approximate $Y$ by a vector field $\tilde{Y}\in\mathcal{U}\cap\mathfrak{X}^4_{\mu}(M)$, such that $\tilde{p}\in Per(\tilde{Y})$ is the hyperbolic continuation of $p$, so with period $\pi_{\tilde{Y}}(\tilde{p})$ close to $\pi_Y(p)$, and

\begin{align}
\theta^{\pi_{\tilde{Y}}(\tilde{p})}\leq\|{P}_{\tilde{Y}}^{\pi_{\tilde{Y}}(\tilde{p})}(\tilde{p})|_{{N}^s_{\tilde{p}}}\|.\label{sigmaexp}
\end{align}
For simplicity, assume that $\pi_{\tilde{Y}}(\tilde{p})$ is an integer.
By (\ref{sigmaexp}), $\theta\leq\|{P}_{\tilde{Y}}^{1}(\tilde{q})|_{{N}^s_{\tilde{q}}}\|$, for some $\tilde{q}\in\mathcal{O}_{\tilde{Y}}(\tilde{p})$.

Let $A_t$ be a one-parameter family of linear maps, for $t\in \left[0,\pi_{\tilde{p}}\right]$, such that $\|A'_tA^{-1}_t\|$ is arbitrarily small, for any $t\in \left[0,1\right]$ and suppose $\|P^1_{\tilde{Y}}{(\tilde{q})}|_{{N}^s_{\tilde{q}}}\|=1-\gamma,$  
where, by expression (\ref{sigmaexp}), $\gamma$ is such that $0~<~\gamma~<~1-\theta$ and $\theta$ is chosen arbitrarily close to $1$.
Now, take $A_{t}=id$, for $t\leq 0$, and $A_t$ a homothetic transformation of ratio of order $\dfrac{1}{1-\gamma}$, for $t\in\left[0,\pi_{\tilde{Y}}(\tilde{p})\right]$, and with entry $a_{1,n-1}=\delta\alpha(t)$, where $\alpha(t)$ is a smooth function such that $\alpha(t)=1$, for $t\geq 1$, $\alpha(t)=0$, for $t\leq 0$, $0<\alpha'(t)<1$, and $\delta>0$ is arbitrarily small. It is straightforward to see that $\|A'_{t}A^{-1}_{t}\|<\frac{\delta}{1-\gamma}$ and that this norm can be taken arbitrarily small, by choosing $\delta>0$ small enough. 

Take $\epsilon>0$ and divide $\pi_{\tilde{Y}}(\tilde{p})=\pi_{\tilde{Y}}(\tilde{q})$ in $\pi_{\tilde{Y}}(\tilde{q})$-one-time intervals. 
By\linebreak Theorem~\ref{Franks}, there exist vector fields $Z_i\in\mathcal{G}^1_{\mu}(M)$, $\dfrac{\epsilon}{\pi_{\tilde{Y}}(\tilde{q})}$-$C^1$-close to $\tilde{Y}$, such that 
$P_{Z_i}^1(\tilde{q})=P^1_{\tilde{Y}}(\tilde{q})\circ A_{1}$, for $i\in\{1,...,\pi_{\tilde{Y}}(\tilde{q})\}$. 
So, by Theorem \ref{pasting}, there exists $Z\in\mathcal{G}^1_{\mu}(M)$, $\epsilon$-$C^1$-close to $\tilde{Y}$, such that $P_Z^{\pi_{\tilde{Y}}(\tilde{q})}(\tilde{q})$ has a eigenvalue equal to $1$ or $-1$. 
This is a contradiction because, since $Z\in\mathcal{G}^1_{\mu}(M)$, $\tilde{q}$ is a hyperbolic closed orbit of saddle-type and so its spectrum must be disjoint from $\mathbb{S}^1$. So, (a) must hold.

Using a similar argument, (b) is proved. This finishes the proof.
\end{proof}


\begin{section}{Proof of Theorem \ref{mainth}}\label{th1}

In this section, we conclude the proof of Theorem \ref{mainth}, by adapting to our setting a technique due to Ma\~{n}\'{e} in \cite{M}.

Take ${X}\in \mathcal{G}^1_{\mu}(M)$. 
By Lemma \ref{Gnosing}, Lemma \ref{splitlemma} and Lemma \ref{Gdomsplit}, we have that $M$ is regular for $X$ and that $P_X^t$ admits a dominated splitting $N=N^1\oplus N^2$ over $M$. 
We want to prove that $P^t_X|_{N^1}$ is uniformly contracting on $M$ and that $P^t_X|_{N^2}$ is uniformly expanding on $M$. 
Let us prove the first condition. 
By Lemma \ref{mane1}, it suffices to prove that $$\displaystyle\liminf_{t\rightarrow\infty}\|P_X^t(x)|_{N^1_x}\|=0, \:\forall \:x\in M.$$
By contradiction, suppose that there is $x\in M$ such that $$\displaystyle\liminf_{t\rightarrow\infty}\|P_X^t(x)|_{N^1_x}\|>0.$$ 
Then, we can choose a subsequence $\{t_n\}_{n\in\mathbb{N}}$ such that $t_n\rightarrow\infty$ as $n\rightarrow\infty$ and 
\begin{align}
\displaystyle\lim_{n\rightarrow\infty}\dfrac{1}{t_n}\log\|P_X^{t_n}(x)|_{N^1_x}\|\geq 0.\label{exp1}
\end{align}

Let $C(M)$ be the set of continuous functions on $M$ and define \linebreak$\varphi:C(M)\rightarrow \mathbb{R}$ by $\varphi(p)=\partial_h(\log\|P_X^h(p)|_{N_p^1}\|)_{h=0}$. By the Riez Theorem, there is a $X^t$-invariant Borel probability measure $\mu$ such that 

\begin{align}
\int_M\varphi \:d\mu&=\lim_{t_n\rightarrow+\infty} \frac{1}{t_n}\int_0^{t_n}\varphi(X^s(x))\:ds\nonumber\\
&=\lim_{t_n\rightarrow+\infty} \frac{1}{t_n}\int_0^{t_n} \partial_h(\log\|P_X^h(X^s(x))|_{N_{X^s(x)}^1}\|)_{h=0}\:ds\nonumber\\
&=\lim_{t_n\rightarrow+\infty} \frac{1}{t_n} \log \|P_X^{t_n}(x)|_{N_{x}^1}\|\geq0.\nonumber
\end{align}
Also, by the Birkhoff Ergodic Theorem, $$\int_M\varphi \:d\mu=\int_M\lim_{t\rightarrow+\infty} \frac{1}{t}\int_0^{t}\varphi(X^s(x))\:dsd\mu(x)\geq0.$$

Now, let $\Sigma(X)$ be the set of points $x\in M$ such that, for any $C^1$-neigh\-bour\-hood $\mathcal{U}$ of $X$ in $\mathfrak{X}^1_{\mu}(M)$ and $\delta>0$, there exist $Y\in \mathcal{U}$ and a $Y$-closed orbit $y\in M$ of period $\pi$ such that $X=Y$ except on the $\delta$-neighborhood of the $Y$-orbit of $y$, and that $dist(Y^t(y),X^t(x))<\delta$, for $0\leq t\leq \pi$. 
A conservative version of the Ergodic Closing Lemma, proved by Arnaud in \cite{Ar}, says that, given a $X^t$-invariant Borel probability measure $\mu$, $\mu(\Sigma(X))=1$. So, there is $x\in\Sigma(X)$ such that
\begin{align}\label{expansion}
\lim_{t\rightarrow+\infty} \frac{1}{t}\int_0^{t}\varphi(X^s(x))\:ds=\lim_{t\rightarrow+\infty} \frac{1}{t}\log\|P_X^{t}(x)|_{N_{x}^1}\|\geq0.
\end{align}
Let $\log\theta<\delta<0$ be arbitrarily small, where $\theta\in(0,1)$ is fixed and given by Lemma \ref{unifhyper}. Thus, there is $t_{\delta}$ such that, for $t\geq t_{\delta}$, 
\begin{align}
\frac{1}{t}\log\|P_X^{t}(x)|_{N_{x}^1}\|\geq\delta.\nonumber
\end{align}
Since $x\in\Sigma(X)$, there are $X_n\in\mathcal{U}$, $C^1$-converging to $X$, and $p_n\in Per(X_n)$ with period $\pi_n$. Notice that $\pi_n\rightarrow+\infty$ as $n\rightarrow\infty$, otherwise, $x\in Per(X)$ with period $\pi$ such that $P_X^{\pi}(x)|_{N_{x}^1}$ expands, by (\ref{expansion}). This is a contradiction since $X\in\mathcal{G}^1_{\mu}(M)$.
So, assuming that $\pi_n>t_{\delta}$ for every $n$, by the continuity of the dominated splitting we have that, for $n$ big enough, $$\|P_{X_n}^{\pi_n}(p_n)|_{N_{p_n}^1}\|\geq\exp(\delta\pi_n)>\theta^{\pi_n}.$$
But this contradicts (a) in Lemma~\ref{unifhyper}, because $X_n\in \mathcal{U}$. So, $P_X^t|_{N^1}$ is uniformly contracting on $M$.

Analogously, we prove that $P^t_X|_{N^2}$ is uniformly expanding on $M$, using $(b)$ of Lemma \ref{unifhyper}. Thus, $M$ is Anosov.
\begin{flushright}
$\square$
\end{flushright}

We end this section with the proof of Corollary \ref{maincor}.

\begin{proof}[Proof of Corollary~\ref{maincor}] 
By contradiction, assume that there exists an isolated vector field $X$ on the boundary of $\mathcal{A}_{\mu}(M^n)$, for fixed $n\geq 4$. 
In this case, we claim that $Sing(X)=\emptyset$. 
Let us suppose that this claim is not true.
If $p\in Sing(X)$ is hyperbolic, and so persistent to small $C^1$-perturbations of $X$, we can find a divergence-free vector field $Y$, arbitrarily close to $X$, such that $Sing(Y)\neq\emptyset$. 
But this is a contradiction because, since $X$ is isolated on the boundary of $\mathcal{A}_{\mu}(M^n)$, $Y$ has to be Anosov. 
If $p$ is not hyperbolic, by Lemma \ref{linearsing}, we can transform $p$ in a hyperbolic singularity of a vector field $Z$, that is $C^1$-close to $X$. So, as before, we reach a contradiction.

Now, by the previous claim and by Remark \ref{remlemma}, we deduce that $M$ is regular for $X$ and that $P^t_X$ admits a dominated splitting over $M$. So, we just have to follow the proof of Theorem \ref{mainth}, presented in Section \ref{th1}, in order to conclude that $X\in\mathcal{A}_{\mu}(M^n)$, which is a contradiction. So, the boundary of $\mathcal{A}_{\mu}(M^n)$ cannot have isolated points.
\end{proof}

\end{section}

\begin{section}{Proof of Theorem \ref{structh}}\label{strstasec}

Let $X\in\mathfrak{X}^1_{\mu}(M)$ be a $C^1$-structurally stable vector field, where $M$ is a manifold with dimension $n\geq4$, and choose a $C^1$-neighborhood $\mathcal{V}$ of $X$, such that every $Y\in\mathcal{V}$ is topologically conjugated to $X$. 

We start the proof with the following claim.

\begin{claim}\label{strucclaim}
If $X\in\mathfrak{X}^1_{\mu}(M)$ is $C^1$-structurally stable then $Sing(X)=\emptyset$.
\end{claim}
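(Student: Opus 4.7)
The plan is to reduce the claim to Lemma \ref{Gnosing} by first establishing that any $C^1$-structurally stable divergence-free vector field belongs to $\mathcal{G}^1_{\mu}(M)$. Once that is done, the conclusion $Sing(X)=\emptyset$ is immediate.

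To prove $X\in\mathcal{G}^1_{\mu}(M)$, I would fix the structural-stability neighborhood $\mathcal{V}$ of $X$ inside which every vector field is topologically conjugate to $X$, and argue that every $Y\in\mathcal{V}$ has all elements of $Crit(Y)$ hyperbolic. Suppose for contradiction some $Y_0\in\mathcal{V}$ admits a non-hyperbolic $q\in Crit(Y_0)$. If $q$ is a singularity, Lemma \ref{linearsing} applied inside $\mathcal{V}$ produces an open set of perturbations of $Y_0$ whose continuation of $q$ is linear and hyperbolic; a second small conservative perturbation on the linearized model, reinserted into the ambient flow via the pasting lemma (Theorem \ref{pasting}), would either split $q$ into several hyperbolic singularities (by a saddle-node-type bifurcation) or shift its index. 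This yields two elements of $\mathcal{V}$ having different numbers or different indices of singularities, and so not topologically conjugate to each other --- contradicting the fact that both are topologically conjugate to $X$.

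The case where $q$ is a non-hyperbolic closed orbit is handled analogously: using the divergence-free version of Franks' lemma (Theorem \ref{Franks}), I would modify the eigenvalues of the linear Poincar\'e map along the orbit of $q$ to either bifurcate $q$ into distinct hyperbolic closed orbits or alter its index, reaching the same contradiction. Consequently every $Y\in\mathcal{V}$ has all critical elements hyperbolic, i.e.\ $X\in\mathcal{G}^1_{\mu}(M)$, and Lemma \ref{Gnosing} delivers $Sing(X)=\emptyset$.

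The main obstacle I anticipate is carrying out the bifurcation rigorously inside the divergence-free class: the constraint that the sum of eigenvalues vanishes restricts the allowable local models, so splitting a non-hyperbolic singularity or a non-hyperbolic closed orbit must be done within this linear constraint. I expect, however, that the combined flexibility of Lemma \ref{linearsing}, Theorem \ref{Franks} and Theorem \ref{pasting} is exactly what is needed to execute the local perturbation, and this is the only delicate step of the argument; the rest is formal.
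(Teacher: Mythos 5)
Your proof takes a genuinely different route from the paper, and it contains a gap at a step you yourself flag as delicate.

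The paper does not attempt to show that a structurally stable divergence-free vector field lies in $\mathcal{G}^1_{\mu}(M)$. Instead it runs the argument of Lemma~\ref{Gnosing} directly, with one substitution: when the dichotomy of Theorem~\ref{BGV} offers the alternative of producing a perturbation $\tilde{Y}\in\mathcal{V}$ with $P_{\tilde Y}^{\pi}(x)=\mathrm{id}$, the paper rules this out not by the star property (which one does not have here) but by the classical fact that a vector field with a parabolic closed orbit cannot be structurally stable (citing \cite{R}), while any $\tilde Y$ in the conjugacy neighborhood $\mathcal{V}$ is itself structurally stable. So the dominated splitting must exist, and the contradiction with Vivier's Theorem~\ref{vivier} follows exactly as in Lemma~\ref{Gnosing}. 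This route deliberately avoids the intermediate statement you try to prove.

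That intermediate statement --- that structural stability implies $X\in\mathcal{G}^1_{\mu}(M)$ --- is not innocuous. Combined with Theorem~\ref{mainth}, it would give that $X$ itself is Anosov, i.e.\ it would settle the stability conjecture for higher-dimensional divergence-free vector fields; but the Remark following the proof of Theorem~\ref{structh} explicitly says the paper does \emph{not} obtain this. That alone should be a warning sign that the reduction you propose is not as formal as you suggest.

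On the technical level there is a concrete flaw in your bifurcation sketch. You propose to first apply Lemma~\ref{linearsing} to replace the non-hyperbolic singularity $q$ of $Y_0$ by a \emph{linear hyperbolic} one, and only then to perform ``a second small conservative perturbation'' that would ``split $q$ into several hyperbolic singularities (by a saddle-node-type bifurcation) or shift its index''. Once $q$ has been made hyperbolic, it is persistent: every sufficiently $C^1$-small perturbation has exactly one singularity near $q$, of the same index, and no saddle-node can be created there. The bifurcation you need must be performed at the non-hyperbolic $q$ directly (pushing an eigenvalue with zero real part off the imaginary axis in two different directions, say), with the perturbation kept local and divergence-free, and one must then argue that the resulting multiset of singularity/closed-orbit indices is a topological-conjugacy invariant that actually changes. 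The analogous bifurcation argument for a non-hyperbolic closed orbit is sketched even more loosely. These are precisely the points you call ``the only delicate step'', but they are the whole content of the claim ``structurally stable implies star'', and they are not supplied.
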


\begin{proof}[Proof of Claim \ref{strucclaim}]
Let $X\in\mathfrak{X}^1_{\mu}(M)$ be a $C^1$-structurally stable vector field and suppose that there exists $p\in Sing(X)$. 
If $p$ is a linear hyperbolic saddle then, perturbing $X$ in $\mathcal{V}$ and proceeding as in the proof of Lemma~\ref{Gnosing}, we get a contradiction. This happens because the existence of a $C^1$-perturbation $Y$ of $X$ in $\mathcal{V}$, such that $P^{\pi}_{Y}(x)=id$, is forbidden, for $x\in Per(Y)$ with arbitrarily large period $\pi$, because the existence of a parabolic closed orbit prevents the structural stability (see \cite{R}).
Now, if $p\in Sing(X)$ is not hyperbolic then, as stated in Lemma~\ref{linearsing}, it can be turned into a linear hyperbolic singularity by a small $C^1$-perturbation of $X$ in $\mathcal{V}$. Then, we just have to apply the previous argument once again.
So, a $C^1$-structurally stable divergence-free vector field has no singularities, which concludes the proof of Claim~\ref{strucclaim}. 
\end{proof}

Now, we are in conditions to obtain the conclusion of the proof of Theorem \ref{structh}. 
Recall that $\mathcal{PR}^1_{\mu}(M)$ is a residual set in $\mathfrak{X}^1_{\mu}(M)$, such that any $X\in\mathcal{PR}^1_{\mu}(M)$ has its closed orbits dense in $M$. Let $\mathcal{W}$ be a small $C^1$-neighbourhood of $X$ such that Theorem~\ref{BGV} holds. 
We start $C^1$-perturbing $X$ in $\mathcal{V}\cap\mathcal{W}$ and taking a vector field $Y\in\mathcal{V}\cap \mathcal{W}\cap\mathcal{PR}^1_{\mu}(M)$. So, $Y$ is a structurally stable divergence-free vector field and it has a closed orbit $y$ with arbitrarily large period $\pi_y$. 
So, by Theorem~\ref{BGV}, there is $\ell>0$ such that $P^t_Y$ admits an $\ell$-dominated splitting over the $Y^t$-orbit of $y$ because, since $Y$ is structurally stable, the existence of a parabolic orbit associated to $\tilde{Y}\in\mathcal{V}$ is not allowed. Then, reproducing the technique used in the proof of Lemma~\ref{Gnosing}, we conclude that $P^t_Y$ admits a dominated splitting over $M$ since, by Claim \ref{strucclaim}, $Sing(Y)=\emptyset$.
Finally, following the proof of Theorem \ref{mainth}, we show that $Y\in\mathcal{A}_{\mu}(M^n)$. 
\begin{flushright}
$\square$
\end{flushright}

\begin{remark}
We do not show that the vector field $X$ in the previous result is itself Anosov, which would prove the stability conjecture for higher dimensional divergence-free vector fields (see example in \cite{Go}).
\end{remark}

\end{section}


\begin{section}{Proof of Theorem \ref{mainth2}}\label{th2}

Let us start by proving the following lemmas. Recall that $\mathcal{FC}^1_{\mu}(M)$ is the set of divergence-free vector fields that are far from heterodimensional cycles and that $\mathcal{KS}^1_{\mu}(M)$ denotes the Kupka-Smale $C^1$-residual set in $\mathfrak{X}^1_{\mu}(M)$.

\begin{lemma}\label{constantind}
There exists a residual set $\mathcal{S}\subset\mathcal{FC}^1_{\mu}(M)$ such that, for every $X\in\mathcal{S}$, all the critical elements of $X$ are hyperbolic and their index is constant.
\end{lemma}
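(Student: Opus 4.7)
The plan is to combine the residuality of Kupka--Smale vector fields with a connecting-lemma argument that prevents two different indices from coexisting inside the open set $\mathcal{FC}^1_{\mu}(M)$. Hyperbolicity will come cheaply from Kupka--Smale; index constancy will be forced by the assumption of being far from heterodimensional cycles.

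First, since $\mathcal{FC}^1_{\mu}(M)$ is $C^1$-open by definition and $\mathcal{KS}^1_{\mu}(M)$ is $C^1$-residual in $\mathfrak{X}^1_{\mu}(M)$, the intersection $\mathcal{R}_1 := \mathcal{KS}^1_{\mu}(M) \cap \mathcal{FC}^1_{\mu}(M)$ is residual in $\mathcal{FC}^1_{\mu}(M)$, and every $X \in \mathcal{R}_1$ has all of $Crit(X)$ hyperbolic. It only remains to carve out a second residual subset of $\mathcal{R}_1$ on which all critical elements share a common index. For each pair $1 \leq i < j \leq n-2$ and each $k \in \mathbb{N}$, let $\mathcal{A}_{i,j,k} \subset \mathcal{FC}^1_{\mu}(M)$ denote the set of vector fields admitting two distinct hyperbolic critical elements $p,q$ with $ind(p)=i$, $ind(q)=j$, period bounded by $k$, and hyperbolicity constants bounded below by $1/k$. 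By continuous dependence of hyperbolic critical elements (and their splittings and rates) on the vector field, each $\mathcal{A}_{i,j,k}$ is closed in $\mathcal{FC}^1_{\mu}(M)$. Granting that every such $\mathcal{A}_{i,j,k}$ has empty interior, setting
\[
\mathcal{S} := \mathcal{R}_1 \;\setminus\; \bigcup_{i<j,\,k\in\mathbb{N}} \mathcal{A}_{i,j,k}
\]
produces the desired residual set.

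The heart of the argument is the emptiness of $int(\mathcal{A}_{i,j,k})$. Suppose, towards contradiction, that some $X$ lies there, together with a $C^1$-open neighborhood $\mathcal{U} \subset \mathcal{FC}^1_{\mu}(M)$ on which the analytic continuations $p_Y, q_Y$ persist hyperbolically with indices $i$ and $j$. By Arnaud's conservative general density theorem $\Omega(X) = M$, so both $p$ and $q$ are non-wandering. Using the $C^1$-connecting lemma for divergence-free vector fields (built from Theorem~\ref{closing} together with the Pasting Lemma~\ref{pasting}), one first performs an $\epsilon$-$C^1$ perturbation inside $\mathcal{U}$ producing a (necessarily non-transverse) heteroclinic orbit $W^u_{\tilde{X}}(p_{\tilde{X}}) \cap W^s_{\tilde{X}}(q_{\tilde{X}}) \neq \emptyset$. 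The opposite intersection has complementary codimension, since $\dim W^s_{\tilde{X}}(p_{\tilde{X}}) + \dim W^u_{\tilde{X}}(q_{\tilde{X}}) = (n-i)+(j+1) > n$, so a further arbitrarily small perturbation of $\tilde{X}$, localized via the Pasting Lemma away from the first heteroclinic orbit, yields $W^s(p)\ti W^u(q)\neq\emptyset$ transversely. The resulting vector field displays a heterodimensional cycle, contradicting membership in $\mathcal{FC}^1_{\mu}(M)$.

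The main obstacle I expect is the simultaneous production of both branches of the heterodimensional cycle by successive small perturbations: the fragile non-transverse intersection $W^u(p)\cap W^s(q)$ has negative dimension deficit and therefore must be created \emph{by hand} through the conservative connecting lemma, while the second perturbation, which produces the robust transverse intersection, must be supported away from the orbit just created so as not to destroy it. Careful use of Theorem~\ref{pasting} to confine the supports of the two successive perturbations, together with volume-preserving versions of the connecting lemma for divergence-free flows, will be the decisive technical step.
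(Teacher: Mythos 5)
There is a genuine gap at the heart of your argument: you invoke the $C^1$-connecting lemma for divergence-free flows to create the heteroclinic intersections, but you only establish that $p$ and $q$ are non-wandering, citing $\Omega(X)=M$. In the divergence-free setting $\Omega(X)=M$ is automatic (Poincar\'e recurrence) for \emph{every} vector field, including ones as far from transitive as integrable flows with invariant tori, and this is much too weak to run the connecting lemma: that lemma needs the existence of an orbit segment passing from a neighborhood of a point of $W^u$ of one saddle to a neighborhood of a point of $W^s$ of the other, i.e.\ a pseudo-orbit linking the two saddles. The paper supplies precisely this ingredient by first perturbing into the $C^1$-residual set of topologically mixing (hence transitive) divergence-free vector fields given by Theorem~\ref{topmix}; the dense orbit of the transitive perturbation is what feeds the Wen--Xia connecting lemma (cited from \cite{WX}, not derived from Theorems~\ref{closing} and~\ref{pasting} as you claim). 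Without this step your proof does not get off the ground.

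A second, tactical, problem compounds the first: you build the fragile codimension-positive intersection $W^u(p)\cap W^s(q)$ first, and the robust transverse one $W^s(p)\ti W^u(q)$ second, and then have to protect the fragile orbit from the support of the second perturbation. You flag this yourself, but to actually confine the second connecting-lemma perturbation away from the already-created heteroclinic orbit you again need enough transitivity to route the new pseudo-orbit around it, which you have not established. The paper sidesteps this entirely by reversing the order: it creates the transverse intersection $W^s(\Delta')\ti W^u(\Gamma')$ first (it persists under arbitrarily small perturbations automatically) and then the fragile one. Finally, two smaller remarks: the Baire decomposition into sets $\mathcal{A}_{i,j,k}$ is an unnecessary detour, since once $\mathcal{FC}^1_{\mu}(M)$ is open a single contradiction for each $X\in\mathcal{FC}^1_{\mu}(M)\cap\mathcal{KS}^1_{\mu}(M)$ already shows that this intersection is the desired residual set; and the closedness of $\mathcal{A}_{i,j,k}$ is not as immediate as you assert, since a sequence of bounded-period hyperbolic closed orbits may collapse onto a singularity, shifting the relevant dimension count for the index.
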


\begin{proof}
Take $\mathcal{S}=\mathcal{FC}^1_{\mu}(M)\cap\mathcal{KS}^1_{\mu}(M)$, a residual set in $\mathcal{FC}^1_{\mu}(M)$, and assume that $X\in\mathcal{S}$ admits two critical elements $\Delta_X$ and $\Gamma_X$ with different indices, say $ind(\Delta_X)<ind(\Gamma_X)$. 
Notice that $\Delta_X$ and $\Gamma_X$ can be closed orbits or singularities.
Let $\mathcal{U}$ be an arbitrarily small $C^1$-neighborhood of $X$, in $\mathfrak{X}^1_{\mu}(M)$, such that the hyperbolic continuations of $\Delta_X$ and of $\Gamma_X$ are well defined.

By Theorem \ref{topmix}, there exists $Y\in\mathcal{U}\cap\mathcal{S}$ which is topologically mixing, so transitive. Let $\Delta'_Y\in\mathcal{O}_Y(\Delta_Y)$ and $\Gamma'_Y\in\mathcal{O}_Y(\Gamma_Y)$. So, if we take $p\in W_Y^s(\Delta'_Y)$ and $q\in W_Y^u(\Gamma'_Y)$, there exists an orbit of $Y$ which passes arbitrarily close to $p$ and $q$. Now, applying the conservative version of the Connecting Lemma for flows (see \cite{WX}), we get $Z\in\mathcal{U}$ such that $W_Z^s(\Delta'_Z)$ and $W_Z^u(\Gamma'_Z)$ intersects transversely. Finally, we can repeat the previous argument, in order to get $\tilde{Z}$, $C^1$-arbitrarily close to $Z$, such that $W_{\tilde{Z}}^u(\Delta'_{\tilde{Z}})\cap W_{\tilde{Z}}^s(\Gamma'_{\tilde{Z}})\neq \emptyset$, but also $W_{\tilde{Z}}^s(\Delta'_{\tilde{Z}})\cap W_{\tilde{Z}}^u(\Gamma'_{\tilde{Z}})\neq ~\emptyset$, because the  first connection is robust and so it persists to small perturbations. Thus, $\tilde{Z}$ exhibits a heterodimensional cycle in $\mathcal{S}$, that can be a periodic, a singular or a mixed heterodimensional cycle. So, we reach a contradiction. Then every critical element of any $X$ in $\mathcal{S}$ is hyperbolic and has constant index.
\end{proof}


\begin{lemma}\label{FCG}
If $X\in  \mathfrak{X}^1_{\mu}(M)\backslash  \overline{\mathcal{G}^1_{\mu}(M)}$ then $X$ can be $C^1$-approximated by a divergence-free vector field  $\;Y$ that exhibits a heterodimensional cycle. Notice that, in particular, $\mathcal{FC}^1_{\mu}(M)\subset\overline{\mathcal{G}^1_{\mu}(M)}$.
\end{lemma}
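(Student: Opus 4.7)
The plan is to argue by a bifurcation--plus--connecting-lemma scheme. Since $X\notin\overline{\mathcal{G}^1_{\mu}(M)}$, fix a $C^1$-neighborhood $\mathcal{U}$ of $X$ with $\mathcal{U}\cap\mathcal{G}^1_{\mu}(M)=\emptyset$. Then no $Z\in\mathcal{U}$ is a star vector field, so each such $Z$ admits, in any $C^1$-neighborhood of $Z$, a divergence-free vector field carrying a non-hyperbolic critical element. Intersecting $\mathcal{U}$ with the Kupka--Smale residual $\mathcal{KS}^1_{\mu}(M)$ and the topologically mixing residual $\mathcal{R}$ of Theorem \ref{topmix}, first approximate $X$ by $Y_0\in\mathcal{U}\cap\mathcal{KS}^1_{\mu}(M)\cap\mathcal{R}$; thus every element of $Crit(Y_0)$ is hyperbolic, and I fix one of them, $p_0$, of index $k$. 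Since $Y_0\notin\mathcal{G}^1_{\mu}(M)$, pick $Y_1\in\mathcal{U}$ arbitrarily $C^1$-close to $Y_0$ carrying a non-hyperbolic critical element $q$.

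The bifurcation step is the heart of the argument: I would perturb $Y_1$ inside $\mathcal{U}$ to hyperbolize $q$ with some index $k'\neq k$. If $q$ is a non-hyperbolic closed orbit, the divergence-free Franks lemma (Theorem \ref{Franks}) applied to $P^{\pi_{Y_1}(q)}_{Y_1}(q)$, combined with the pasting lemma (Theorem \ref{pasting}), moves the borderline multipliers off the unit circle in a trace-preserving way that changes the stable/unstable dimensions; if $q$ is a non-hyperbolic singularity, use Lemma \ref{linearsing} to linearize and then perturb its spectrum across the imaginary axis subject to the trace-zero constraint. Either way the result is a vector field $Y_2\in\mathcal{U}$ in which $q$ becomes a hyperbolic critical element of index $k'\neq k$, while, by persistence of hyperbolicity under small $C^1$ perturbations, the original hyperbolic critical element $p_0$ of $Y_0$ survives in $Y_2$ with its index $k$. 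Up to relabelling we obtain two hyperbolic critical elements $\Delta,\Gamma\in Crit(Y_2)$ with $ind(\Delta)<ind(\Gamma)$.

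To produce the heterodimensional cycle I would then mimic the proof of Lemma \ref{constantind}. A last $C^1$-small perturbation into the mixing residual $\mathcal{R}$ provides $Y_3\in\mathcal{U}\cap\mathcal{R}$ still carrying the hyperbolic continuations of $\Delta$ and $\Gamma$. Transitivity of $Y_3$ yields an orbit passing arbitrarily close to any prescribed pair of points in $W^s_{Y_3}(\Delta)$ and $W^u_{Y_3}(\Gamma)$, and the conservative Connecting Lemma of Wen--Xia then produces $W\in\mathcal{U}$ realizing a transverse intersection $W^s_W(\Delta)\ti W^u_W(\Gamma)$; this connection is $C^1$-robust precisely because $ind(\Delta)<ind(\Gamma)$. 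A second application of the Connecting Lemma creates the complementary intersection $W^u_Y(\Delta)\cap W^s_Y(\Gamma)\neq\emptyset$ in some $Y\in\mathcal{U}$ close to $W$, without destroying the already robust transverse connection. Hence $Y\in\mathcal{U}$ exhibits a heterodimensional cycle associated to $\Delta$ and $\Gamma$. The ``in particular'' clause $\mathcal{FC}^1_{\mu}(M)\subset\overline{\mathcal{G}^1_{\mu}(M)}$ is the contrapositive: any $X\in\mathcal{FC}^1_{\mu}(M)$ has a $C^1$-neighborhood free of heterodimensional cycles, so the construction above cannot be run, forcing $X\in\overline{\mathcal{G}^1_{\mu}(M)}$.

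The main obstacle will be the bifurcation step. In the divergence-free setting the trace-zero and determinant-one constraints on the linear data prevent one from simply pushing a single multiplier across $\mathbb{S}^1$ (or a single eigenvalue across $i\mathbb{R}$) with a $C^1$-small conservative perturbation; any chosen spectral motion must be compensated elsewhere in the spectrum. Executing this carefully, while simultaneously ensuring that the resulting hyperbolic index differs from the index of the persisted critical element $p_0$, is exactly what the conservative Franks lemma (Theorem \ref{Franks}), the pasting lemma (Theorem \ref{pasting}) and the linearization procedure of Lemma \ref{linearsing} are designed to make possible.
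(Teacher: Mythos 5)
Your proof follows essentially the same route as the paper's: inside a $C^1$-neighborhood disjoint from $\mathcal{G}^1_{\mu}(M)$, approximate $X$ by a nice vector field carrying a hyperbolic critical element of some index $k$; use the (open) non-star condition to find, arbitrarily close, a vector field carrying a non-hyperbolic critical element $q$; hyperbolize $q$ with a different index (via Lemma~\ref{linearsing} in the singularity case, or via Zuppa, Franks' lemma (Theorem~\ref{Franks}) and the pasting lemma (Theorem~\ref{pasting}) in the closed-orbit case); and then run the transitivity-plus-connecting-lemma argument of Lemma~\ref{constantind} to produce the heterodimensional cycle. Your reading of the ``in particular'' clause as the contrapositive is also correct.

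One small gap to flag. After approximating $X$ by $Y_0\in\mathcal{U}\cap\mathcal{KS}^1_{\mu}(M)\cap\mathcal{R}$ you ``fix one of them, $p_0$,'' but membership in $\mathcal{KS}^1_{\mu}(M)\cap\mathcal{R}$ does not by itself guarantee that $Crit(Y_0)\neq\emptyset$: a Kupka--Smale, topologically mixing, divergence-free flow may a priori have no singularities and no closed orbits at all (so all critical elements are vacuously hyperbolic). The paper avoids this by making its initial approximation land also in $\mathcal{PR}^1_{\mu}(M)$, the Pugh--Robinson residual, for which closed orbits are dense in $M$ and hence certainly exist; this is where the hyperbolic closed orbit $p_Y$ comes from. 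Intersecting your residual with $\mathcal{PR}^1_{\mu}(M)$ repairs the step at no cost. Beyond that, the argument is correct and matches the paper's.
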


\begin{proof}
Take $X\in \mathfrak{X}^1_{\mu}(M)\backslash \overline{\mathcal{G}^1_{\mu}(M)}$ and $Y$ a vector field, $C^1$-sufficiently close to $X$, such that $Y\in\big(\mathfrak{X}^1_{\mu}(M)\backslash\overline{\mathcal{G}^1_{\mu}(M)}\big)\cap\mathcal{KS}^1_{\mu}(M)\cap\mathcal{PR}^1_{\mu}(M)$, where $\mathcal{KS}^1_{\mu}(M)$ denotes the Kupka-Smale $C^1$-residual subset of $\mathfrak{X}_{\mu}^1(M)$ (see \cite{S}) and $\mathcal{PR}^1_{\mu}(M)$ denotes a $C^1$-residual set in $\mathfrak{X}^1_{\mu}(M)$ such that any $X\in\mathcal{PR}^1_{\mu}(M)$ has its closed orbits dense in $M$. 
So, we can take $p_Y$ a hyperbolic closed orbit of $Y$, with period $\pi_Y$ and index $u$. 
Let $\mathcal{W}$ be a small $C^1$-neighborhood of $Y$ such that the hyperbolic continuation of $p_Y$ is well defined.

As $Y\in \mathfrak{X}^1_{\mu}(M)\backslash \overline{\mathcal{G}^1_{\mu}(M)}$, and once that this set is open, for every $C^1$-neighborhood $\mathcal{V}$ of $Y$ in $\mathfrak{X}^1_{\mu}(M)\backslash \overline{\mathcal{G}^1_{\mu}(M)}$, we can find a vector field $Z\subset\mathcal{W}\cap\mathcal{V}$, $C^1$-arbitrarily close to $Y$, such that $Z$ has a hyperbolic closed orbit $p_Z$, corresponding to the hyperbolic continuation of $p_Y$, with index $u$ and period $\pi_Z$ close to $\pi_Y$.
However, since \linebreak$Z\in\mathfrak{X}^1_{\mu}(M)\backslash \overline{\mathcal{G}^1_{\mu}(M)}$, it has to have a non-hyperbolic critical point $q_Z$, that can be a singularity or a closed orbit. 

If $q_Z$ is a non-hyperbolic singularity of $Z$, by a $C^1$-small perturbation on the vector field, it can be turned on a hyperbolic singularity with index $v\neq u$. 
Observe that it can appear another non-hyperbolic critical points but, since we already have two hyperbolic critical points with different indices, we can build a heterodimensional cycle, as shown in Lemma \ref{constantind}.

Now, assume that $q_Z$ is a non-hyperbolic closed orbit of $Z$, so unstable. In this case, we start by applying Theorem \ref{zuppa} to increase the differentiability of the vector field $Z$ from $C^1$ to $C^4$, in order to be able to use Theorem~\ref{Franks}, that ensures the existence of a vector field $W\in\mathfrak{X}^4_{\mu}(M)$, $C^1$-close to $Y$, such that $p_W$ (the hyperbolic continuation of $p_Z$) and $q_W$ are now hyperbolic closed orbits for $W$ with different indices. Again, by Lemma \ref{constantind}, we can $C^1$-proximate $W$ by a vector field $\tilde{W}$ exhibiting a heterodimensional cycle. 
\end{proof}

\begin{remark}
For the dissipative setting, in \cite{GW} the authors show that $\mathcal{G}^1(M)$ is a subset of $\mathcal{FC}^1(M)$.
\end{remark}

Now, we are ready to prove Theorem \ref{mainth2}. Suppose that $X\in~\mathfrak{X}^1_{\mu}(M)$, where $\dim(M)\geq 4$, cannot be $C^1$-approximated by a divergence-free vector field exhibiting a heterodimensional cycle, meaning that $X\in \mathcal{FC}^1_{\mu}(M)$. Then, by Lemma \ref{FCG}, and due to the openness of $\mathcal{FC}^1_{\mu}(M)$ in $\mathfrak{X}^1_{\mu}(M)$, we see that $X$ can be $C^1$-approximated by a di\-ver\-gence-free vector field $Y$, satisfying that $Y\in\mathcal{FC}^1_{\mu}(M)\cap\mathcal{G}^1_{\mu}(M)$. Finally, Theorem~\ref{mainth} ensures that $Y$ is Anosov, which concludes the proof.
\begin{flushright}
$\square$
\end{flushright}

\end{section}

\section*{Acknowledgements}
I would like to thank my supervisors, M\'ario Bessa and Jorge Rocha, whose encouragement and guidance enabled me to develop this work. 

The author was supported by Funda\c c\~ao para a Ci\^encia e a Tecnologia, SFRH/BD/
33100/2007.


\end{document}